\begin{document}
\newcommand {\emptycomment}[1]{} 

\newcommand{\nc}{\newcommand}
\newcommand{\delete}[1]{}
\nc{\mfootnote}[1]{\footnote{#1}} 
\nc{\todo}[1]{\tred{To do:} #1}

\nc{\mlabel}[1]{\label{#1}}  
\nc{\mcite}[1]{\cite{#1}}  
\nc{\mref}[1]{\ref{#1}}  
\nc{\meqref}[1]{\eqref{#1}} 
\nc{\mbibitem}[1]{\bibitem{#1}} 

\delete{
\nc{\mlabel}[1]{\label{#1}  
{\hfill \hspace{1cm}{\bf{{\ }\hfill(#1)}}}}
\nc{\mcite}[1]{\cite{#1}{{\bf{{\ }(#1)}}}}  
\nc{\mref}[1]{\ref{#1}{{\bf{{\ }(#1)}}}}  
\nc{\meqref}[1]{\eqref{#1}{{\bf{{\ }(#1)}}}} 
\nc{\mbibitem}[1]{\bibitem[\bf #1]{#1}} 
}

\newtheorem{thm}{Theorem}[section]
\newtheorem{lem}[thm]{Lemma}
\newtheorem{cor}[thm]{Corollary}
\newtheorem{pro}[thm]{Proposition}
\theoremstyle{definition}
\newtheorem{defi}[thm]{Definition}
\newtheorem{ex}[thm]{Example}
\newtheorem{rmk}[thm]{Remark}
\newtheorem{pdef}[thm]{Proposition-Definition}
\newtheorem{condition}[thm]{Condition}

\renewcommand{\labelenumi}{{\rm(\alph{enumi})}}
\renewcommand{\theenumi}{\alph{enumi}}
\renewcommand{\labelenumii}{{\rm(\roman{enumii})}}
\renewcommand{\theenumii}{\roman{enumii}}

\nc{\tred}[1]{\textcolor{red}{#1}}
\nc{\tblue}[1]{\textcolor{blue}{#1}}
\nc{\tgreen}[1]{\textcolor{green}{#1}}
\nc{\tpurple}[1]{\textcolor{purple}{#1}}
\nc{\btred}[1]{\textcolor{red}{\bf #1}}
\nc{\btblue}[1]{\textcolor{blue}{\bf #1}}
\nc{\btgreen}[1]{\textcolor{green}{\bf #1}}
\nc{\btpurple}[1]{\textcolor{purple}{\bf #1}}

\nc{\ts}[1]{\textcolor{purple}{Tianshui:#1}}
\nc{\cm}[1]{\textcolor{red}{Chengming:#1}}
\nc{\li}[1]{\textcolor{blue}{#1}}
\nc{\lir}[1]{\textcolor{blue}{Li:#1}}


\nc{\twovec}[2]{\left(\begin{array}{c} #1 \\ #2\end{array} \right )}
\nc{\threevec}[3]{\left(\begin{array}{c} #1 \\ #2 \\ #3 \end{array}\right )}
\nc{\twomatrix}[4]{\left(\begin{array}{cc} #1 & #2\\ #3 & #4 \end{array} \right)}
\nc{\threematrix}[9]{{\left(\begin{matrix} #1 & #2 & #3\\ #4 & #5 & #6 \\ #7 & #8 & #9 \end{matrix} \right)}}
\nc{\twodet}[4]{\left|\begin{array}{cc} #1 & #2\\ #3 & #4 \end{array} \right|}

\nc{\mto}{\to }

\nc{\rk}{\mathrm{r}}
\newcommand{\g}{\mathfrak g}
\newcommand{\h}{\mathfrak h}
\newcommand{\pf}{\noindent{$Proof$.}\ }
\newcommand{\frkg}{\mathfrak g}
\newcommand{\frkh}{\mathfrak h}
\newcommand{\Id}{\rm{Id}}
\newcommand{\gl}{\mathfrak {gl}}
\newcommand{\ad}{\mathrm{ad}}
\newcommand{\add}{\frka\frkd}
\newcommand{\frka}{\mathfrak a}
\newcommand{\frkb}{\mathfrak b}
\newcommand{\frkc}{\mathfrak c}
\newcommand{\frkd}{\mathfrak d}
\newcommand {\comment}[1]{{\marginpar{*}\scriptsize\textbf{Comments:} #1}}

\nc{\tforall}{\text{ for all }}

\nc{\svec}[2]{{\tiny\left(\begin{matrix}#1\\
#2\end{matrix}\right)\,}}  
\nc{\ssvec}[2]{{\tiny\left(\begin{matrix}#1\\
#2\end{matrix}\right)\,}} 

\nc{\bia}{{$\mathcal{P}$-bimodule ${\bf k}$-algebra}\xspace}
\nc{\bias}{{$\mathcal{P}$-bimodule ${\bf k}$-algebras}\xspace}

\nc{\OT}{constant $\theta$-}
\nc{\T}{$\theta$-}
\nc{\IT}{inverse $\theta$-}


\nc{\mvarphi}{\psi}


\nc{\plhp}{endo pre-Lie algebra\xspace} \nc{\plhps}{endo pre-Lie algebras\xspace}
\nc{\weak}{coherent\xspace} 
\nc{\Weak}{Coherent\xspace}
\nc{\strong}{strong\xspace}

\nc{\cybe}{CYBE\xspace} \nc{\admset}{\{\pm x\}\cup K^{\times} x^{-1}}

\nc{\drep}{dual representation\xspace}
\nc{\dreping}{dually represents\xspace}
\nc{\dreped}{dually represented\xspace}

\nc{\LB}{\mathbf{LB}}
\nc{\MT}{\mathbf{MT}}
\nc{\OP}{\mathbf{OP}}
\nc{\PL}{\mathbf{PL}}
\nc{\MP}{\mathbf{MP}}

\nc{\opa}{\cdot_A}
\nc{\opb}{\cdot_B}

\nc{\pll}{\beta}
\nc{\plc}{\epsilon}

\nc{\adec}{\check{;}} \nc{\aop}{\alpha}
\nc{\dftimes}{\widetilde{\otimes}} \nc{\dfl}{\succ} \nc{\dfr}{\prec}
\nc{\dfc}{\circ} \nc{\dfb}{\bullet} \nc{\dft}{\star}
\nc{\dfcf}{{\mathbf k}} \nc{\apr}{\ast} \nc{\spr}{\cdot}
\nc{\twopr}{\circ} \nc{\tspr}{\star} \nc{\sempr}{\ast}
\nc{\disp}[1]{\displaystyle{#1}}
\nc{\bin}[2]{ (_{\stackrel{\scs{#1}}{\scs{#2}}})}  
\nc{\binc}[2]{ \left (\!\! \begin{array}{c} \scs{#1}\\
    \scs{#2} \end{array}\!\! \right )}  
\nc{\bincc}[2]{  \left ( {\scs{#1} \atop
    \vspace{-.5cm}\scs{#2}} \right )}  
\nc{\sarray}[2]{\begin{array}{c}#1 \vspace{.1cm}\\ \hline
    \vspace{-.35cm} \\ #2 \end{array}}
\nc{\bs}{\bar{S}} \nc{\dcup}{\stackrel{\bullet}{\cup}}
\nc{\dbigcup}{\stackrel{\bullet}{\bigcup}} \nc{\etree}{\big |}
\nc{\la}{\longrightarrow} \nc{\fe}{\'{e}} \nc{\rar}{\rightarrow}
\nc{\dar}{\downarrow} \nc{\dap}[1]{\downarrow
\rlap{$\scriptstyle{#1}$}} \nc{\uap}[1]{\uparrow
\rlap{$\scriptstyle{#1}$}} \nc{\defeq}{\stackrel{\rm def}{=}}
\nc{\dis}[1]{\displaystyle{#1}} \nc{\dotcup}{\,
\displaystyle{\bigcup^\bullet}\ } \nc{\sdotcup}{\tiny{
\displaystyle{\bigcup^\bullet}\ }} \nc{\hcm}{\ \hat{,}\ }
\nc{\hcirc}{\hat{\circ}} \nc{\hts}{\hat{\shpr}}
\nc{\lts}{\stackrel{\leftarrow}{\shpr}}
\nc{\rts}{\stackrel{\rightarrow}{\shpr}} \nc{\lleft}{[}
\nc{\lright}{]} \nc{\uni}[1]{\tilde{#1}} \nc{\wor}[1]{\check{#1}}
\nc{\free}[1]{\bar{#1}} \nc{\den}[1]{\check{#1}} \nc{\lrpa}{\wr}
\nc{\curlyl}{\left \{ \begin{array}{c} {} \\ {} \end{array}
    \right .  \!\!\!\!\!\!\!}
\nc{\curlyr}{ \!\!\!\!\!\!\!
    \left . \begin{array}{c} {} \\ {} \end{array}
    \right \} }
\nc{\leaf}{\ell}       
\nc{\longmid}{\left | \begin{array}{c} {} \\ {} \end{array}
    \right . \!\!\!\!\!\!\!}
\nc{\ot}{\otimes} \nc{\sot}{{\scriptstyle{\ot}}}
\nc{\otm}{\overline{\ot}}
\nc{\ora}[1]{\stackrel{#1}{\rar}}
\nc{\ola}[1]{\stackrel{#1}{\la}}
\nc{\pltree}{\calt^\pl}
\nc{\epltree}{\calt^{\pl,\NC}}
\nc{\rbpltree}{\calt^r}
\nc{\scs}[1]{\scriptstyle{#1}} \nc{\mrm}[1]{{\rm #1}}
\nc{\dirlim}{\displaystyle{\lim_{\longrightarrow}}\,}
\nc{\invlim}{\displaystyle{\lim_{\longleftarrow}}\,}
\nc{\mvp}{\vspace{0.5cm}} \nc{\svp}{\vspace{2cm}}
\nc{\vp}{\vspace{8cm}} \nc{\proofbegin}{\noindent{\bf Proof: }}
\nc{\proofend}{$\blacksquare$ \vspace{0.5cm}}
\nc{\freerbpl}{{F^{\mathrm RBPL}}}
\nc{\sha}{{\mbox{\cyr X}}}  
\nc{\ncsha}{{\mbox{\cyr X}^{\mathrm NC}}} \nc{\ncshao}{{\mbox{\cyr
X}^{\mathrm NC,\,0}}}
\nc{\shpr}{\diamond}    
\nc{\shprm}{\overline{\diamond}}    
\nc{\shpro}{\diamond^0}    
\nc{\shprr}{\diamond^r}     
\nc{\shpra}{\overline{\diamond}^r}
\nc{\shpru}{\check{\diamond}} \nc{\catpr}{\diamond_l}
\nc{\rcatpr}{\diamond_r} \nc{\lapr}{\diamond_a}
\nc{\sqcupm}{\ot}
\nc{\lepr}{\diamond_e} \nc{\vep}{\varepsilon} \nc{\labs}{\mid\!}
\nc{\rabs}{\!\mid} \nc{\hsha}{\widehat{\sha}}
\nc{\lsha}{\stackrel{\leftarrow}{\sha}}
\nc{\rsha}{\stackrel{\rightarrow}{\sha}} \nc{\lc}{\lfloor}
\nc{\rc}{\rfloor}
\nc{\tpr}{\sqcup}
\nc{\nctpr}{\vee}
\nc{\plpr}{\star}
\nc{\rbplpr}{\bar{\plpr}}
\nc{\sqmon}[1]{\langle #1\rangle}
\nc{\forest}{\calf}
\nc{\altx}{\Lambda_X} \nc{\vecT}{\vec{T}} \nc{\onetree}{\bullet}
\nc{\Ao}{\check{A}}
\nc{\seta}{\underline{\Ao}}
\nc{\deltaa}{\overline{\delta}}
\nc{\trho}{\tilde{\rho}}

\nc{\rpr}{\circ}
\nc{\dpr}{{\tiny\diamond}}
\nc{\rprpm}{{\rpr}}

\nc{\mmbox}[1]{\mbox{\ #1\ }} \nc{\ann}{\mrm{ann}}
\nc{\Aut}{\mrm{Aut}} \nc{\can}{\mrm{can}}
\nc{\twoalg}{{two-sided algebra}\xspace}
\nc{\colim}{\mrm{colim}}
\nc{\Cont}{\mrm{Cont}} \nc{\rchar}{\mrm{char}}
\nc{\cok}{\mrm{coker}} \nc{\dtf}{{R-{\rm tf}}} \nc{\dtor}{{R-{\rm
tor}}}
\renewcommand{\det}{\mrm{det}}
\nc{\depth}{{\mrm d}}
\nc{\Div}{{\mrm Div}} \nc{\End}{\mrm{End}} \nc{\Ext}{\mrm{Ext}}
\nc{\Fil}{\mrm{Fil}} \nc{\Frob}{\mrm{Frob}} \nc{\Gal}{\mrm{Gal}}
\nc{\GL}{\mrm{GL}} \nc{\Hom}{\mrm{Hom}} \nc{\hsr}{\mrm{H}}
\nc{\hpol}{\mrm{HP}} \nc{\id}{\mrm{id}} \nc{\im}{\mrm{im}}
\nc{\incl}{\mrm{incl}} \nc{\length}{\mrm{length}}
\nc{\LR}{\mrm{LR}} \nc{\mchar}{\rm char} \nc{\NC}{\mrm{NC}}
\nc{\mpart}{\mrm{part}} \nc{\pl}{\mrm{PL}}
\nc{\ql}{{\QQ_\ell}} \nc{\qp}{{\QQ_p}}
\nc{\rank}{\mrm{rank}} \nc{\rba}{\rm{RBA }} \nc{\rbas}{\rm{RBAs }}
\nc{\rbpl}{\mrm{RBPL}}
\nc{\rbw}{\rm{RBW }} \nc{\rbws}{\rm{RBWs }} \nc{\rcot}{\mrm{cot}}
\nc{\rest}{\rm{controlled}\xspace}
\nc{\rdef}{\mrm{def}} \nc{\rdiv}{{\rm div}} \nc{\rtf}{{\rm tf}}
\nc{\rtor}{{\rm tor}} \nc{\res}{\mrm{res}} \nc{\SL}{\mrm{SL}}
\nc{\Spec}{\mrm{Spec}} \nc{\tor}{\mrm{tor}} \nc{\Tr}{\mrm{Tr}}
\nc{\mtr}{\mrm{sk}}

\nc{\ab}{\mathbf{Ab}} \nc{\Alg}{\mathbf{Alg}}
\nc{\Algo}{\mathbf{Alg}^0} \nc{\Bax}{\mathbf{Bax}}
\nc{\Baxo}{\mathbf{Bax}^0} \nc{\RB}{\mathbf{RB}}
\nc{\RBo}{\mathbf{RB}^0} \nc{\BRB}{\mathbf{RB}}
\nc{\Dend}{\mathbf{DD}} \nc{\bfk}{{\bf k}} \nc{\bfone}{{\bf 1}}
\nc{\base}[1]{{a_{#1}}} \nc{\detail}{\marginpar{\bf More detail}
    \noindent{\bf Need more detail!}
    \svp}
\nc{\Diff}{\mathbf{Diff}} \nc{\gap}{\marginpar{\bf
Incomplete}\noindent{\bf Incomplete!!}
    \svp}
\nc{\FMod}{\mathbf{FMod}} \nc{\mset}{\mathbf{MSet}}
\nc{\rb}{\mathrm{RB}} \nc{\Int}{\mathbf{Int}}
\nc{\Mon}{\mathbf{Mon}}
\nc{\remarks}{\noindent{\bf Remarks: }}
\nc{\OS}{\mathbf{OS}} 
\nc{\Rep}{\mathbf{Rep}}
\nc{\Rings}{\mathbf{Rings}} \nc{\Sets}{\mathbf{Sets}}
\nc{\DT}{\mathbf{DT}}

\nc{\BA}{{\mathbb A}} \nc{\CC}{{\mathbb C}} \nc{\DD}{{\mathbb D}}
\nc{\EE}{{\mathbb E}} \nc{\FF}{{\mathbb F}} \nc{\GG}{{\mathbb G}}
\nc{\HH}{{\mathbb H}} \nc{\LL}{{\mathbb L}} \nc{\NN}{{\mathbb N}}
\nc{\QQ}{{\mathbb Q}} \nc{\RR}{{\mathbb R}} \nc{\BS}{{\mathbb{S}}} \nc{\TT}{{\mathbb T}}
\nc{\VV}{{\mathbb V}} \nc{\ZZ}{{\mathbb Z}}


\nc{\calao}{{\mathcal A}} \nc{\cala}{{\mathcal A}}
\nc{\calc}{{\mathcal C}} \nc{\cald}{{\mathcal D}}
\nc{\cale}{{\mathcal E}} \nc{\calf}{{\mathcal F}}
\nc{\calfr}{{{\mathcal F}^{\,r}}} \nc{\calfo}{{\mathcal F}^0}
\nc{\calfro}{{\mathcal F}^{\,r,0}} \nc{\oF}{\overline{F}}
\nc{\calg}{{\mathcal G}} \nc{\calh}{{\mathcal H}}
\nc{\cali}{{\mathcal I}} \nc{\calj}{{\mathcal J}}
\nc{\call}{{\mathcal L}} \nc{\calm}{{\mathcal M}}
\nc{\caln}{{\mathcal N}} \nc{\calo}{{\mathcal O}}
\nc{\calp}{{\mathcal P}} \nc{\calq}{{\mathcal Q}} \nc{\calr}{{\mathcal R}}
\nc{\calt}{{\mathcal T}} \nc{\caltr}{{\mathcal T}^{\,r}}
\nc{\calu}{{\mathcal U}} \nc{\calv}{{\mathcal V}}
\nc{\calw}{{\mathcal W}} \nc{\calx}{{\mathcal X}}
\nc{\CA}{\mathcal{A}}

\nc{\fraka}{{\mathfrak a}} \nc{\frakB}{{\mathfrak B}}
\nc{\frakb}{{\mathfrak b}} \nc{\frakd}{{\mathfrak d}}
\nc{\oD}{\overline{D}}
\nc{\frakF}{{\mathfrak F}} \nc{\frakg}{{\mathfrak g}}
\nc{\frakm}{{\mathfrak m}} \nc{\frakM}{{\mathfrak M}}
\nc{\frakMo}{{\mathfrak M}^0} \nc{\frakp}{{\mathfrak p}}
\nc{\frakS}{{\mathfrak S}} \nc{\frakSo}{{\mathfrak S}^0}
\nc{\fraks}{{\mathfrak s}} \nc{\os}{\overline{\fraks}}
\nc{\frakT}{{\mathfrak T}}
\nc{\oT}{\overline{T}}
\nc{\frakX}{{\mathfrak X}} \nc{\frakXo}{{\mathfrak X}^0}
\nc{\frakx}{{\mathbf x}}
\nc{\frakTx}{\frakT}      
\nc{\frakTa}{\frakT^a}        
\nc{\frakTxo}{\frakTx^0}   
\nc{\caltao}{\calt^{a,0}}   
\nc{\ox}{\overline{\frakx}} \nc{\fraky}{{\mathfrak y}}
\nc{\frakz}{{\mathfrak z}} \nc{\oX}{\overline{X}}

\font\cyr=wncyr10

\nc{\al}{\alpha}
\nc{\lam}{\lambda}
\nc{\lr}{\longrightarrow}


\title[Categories for Lie bialgebras and related structures]{Coherent categorical structures for Lie bialgebras, Manin triples, classical $r$-matrices and pre-Lie algebras}

\author{Chengming Bai}
\address{Chern Institute of Mathematics \& LPMC, Nankai University, Tianjin 300071, China}
         \email{baicm@nankai.edu.cn}

\author{Li Guo}
\address{Department of Mathematics and Computer Science, Rutgers University, Newark, NJ 07102, USA}
         \email{liguo@rutgers.edu}

\author{Yunhe Sheng}
\address{Department of Mathematics, Jilin University, Changchun 130012, Jilin, China}
\email{shengyh@jlu.edu.cn}

\date{\today}

\begin{abstract}
The broadly applied notions of Lie bialgebras, Manin triples, classical $r$-matrices and $\mathcal{O}$-operators of Lie algebras owe their importance to the close relationship among them.
Yet these notions and their correspondences are mostly understood as classes of objects and maps among the classes.
To gain categorical insight, this paper introduces, for each of the classes, a notion of homomorphisms, uniformly called coherent homomorphisms, so that the classes of objects become categories and the maps among the classes become functors or category equivalences. For this purpose, we start with the notion of an endo Lie algebra, consisting of a Lie algebra equipped with a Lie algebra endomorphism. We then generalize the above classical notions for Lie algebras to endo Lie algebras. As a result, we obtain the notion of coherent endomorphisms for each of the classes, which then generalizes to the notion of coherent homomorphisms by a polarization process. The coherent homomorphisms are compatible with the correspondences among the various constructions, as well as with the category of pre-Lie algebras.
\end{abstract}

\subjclass[2010]{
17B62,   
17B38,       
16T10,   
16T25,   
12H10   
16W99,  
57R56.   
}

\keywords{Lie bialgebra, Manin triple, classical Yang-Baxter equation, $r$-matrix, $\calo$-operator, pre-Lie algebra.\\
Published online: Forum Mathematicum, 2022. https://doi.org/10.1515/forum-2021-0240
}

\maketitle

\vspace{-1.2cm}

\tableofcontents

\vspace{-.5cm}

\allowdisplaybreaks

\section{Introduction}
\vspace{-.2cm}
This paper gives a categorical structure to each of the classes of Lie bialgebras, Manin triples, classical $r$-matrices, $\calo$-operators and pre-Lie algebras by introducing their morphisms. The morphisms are compatible with natural correspondences among these classes, so that these correspondences become functors.

\vspace{-.2cm}
\subsection{Lie bialgebras, Manin triples and the related structures}

The Lie bialgebra is the algebraic structure corresponding
to a Poisson-Lie group. It is also the classical structure of a quantized universal enveloping algebra~\mcite{CP,D}.

The great importance of Lie bialgebras is reflected by its
close relationship with several other fundamental notions.
First Lie bialgebras are
characterized by Manin triples and  matched pairs  of Lie algebras~\mcite{D1}.
In fact, there is a one-one correspondence between Lie
bialgebras and Manin triples of Lie algebras. The same holds for
Lie bialgebras and matched pairs  of Lie algebras associated to coadjoint representations.

Furthermore, solutions of the classical Yang-Baxter
equation, or the  classical  $r$-matrices, naturally give rise to coboundary
Lie bialgebras \cite{CP,STS}. Furthermore, such solutions are provided by
$\calo$-operators, which in turn are provided by pre-Lie algebras. See~\cite{Bai2,Bai3,Bu,Ku} for more details. These close relations can be summarized in the following diagram where some of the correspondences go both ways, showing by arrows in both directions, and some are even one-one correspondences, showing by two-directional double arrows.
\vspace{-.1cm}
\begin{equation}
    \begin{split}
        \xymatrix{
            &&&\text{matched pairs of}\atop \text{Lie algebras} \\
            \text{pre-Lie}\atop\text{ algebras} \ar@<.4ex>[r]      & \mathcal{O}\text{-operators on}\atop\text{Lie algebras}\ar@<.4ex>[l] \ar@<.4ex>[r]&
             \text{solutions of}\atop \text{CYBE} \ar@<.4ex>[l]
            \ar@2{->}[r]& \text{Lie}\atop \text{ bialgebras} \ar@2{<->}[d] \ar@2{<->}[u] \\
            &&& \text{Manin triples of}\atop \text{Lie algebras} & }
    \end{split}
    \mlabel{eq:bigdiag}
\end{equation}
\vspace{-.6cm}
\subsection{Existing morphisms of the structures}
For further studies and applications of the important structures and their relations in the above diagram, it is fundamental to understand them in the context of categories.
The first step in this direction is to define suitable morphisms for these structures to make them into categories, so that their relations can be made precise as functors and equivalences among these categories. Unfortunately, the understanding of their morphisms is quite limited and can be summarized as follows.
\vspace{-.2cm}
\subsubsection{Morphisms of Lie bialgebras and Manin triples}
A notion of homomorphisms of Lie bialgebras has been defined in analog to homomorphisms of associative bialgebras and has been applied in the important quantization of Lie bialgebras~\mcite{CP,En,EK,EK1}.
The notion of isomorphisms of two Manin triples is also defined and agrees with the isomorphisms of Lie bialgebras.
However, the expected extension of isomorphisms of Manin triples
to homomorphisms do not agree with the homomorphisms of their
corresponding Lie bialgebras~\mcite{CP}.

Explicitly, let $f$ be a homomorphism between two
Lie bialgebras $(\g_1, \delta_1)$ and $(\g_2,\delta_2)$. Then
$f:\g_1\mto \g_2$ is a homomorphism of Lie algebras and
$f^*:\g_2^*\rightarrow \g_1^*$ is a homomorphism of the dual Lie
algebras. It is natural to expect that $f$ should also give a
homomorphism of the corresponding Manin triples $(\g_1\bowtie
\g_1^*,\g_1,\g_1^*)$ and $(\g_2\bowtie
\g_2^*,\g_2,\g_2^*)$.  In fact, as the most
natural choice, the map $f+f^*$ should be a homomorphism between the two Lie algebras $\g_1\bowtie
\g_1^*$ and $\g_2\bowtie
\g_2^*$. However, this does not hold, even in the
case when  $\g_1=\g_2$.
\vspace{-.1cm}
\subsubsection{Morphisms of classical $r$-matrices, $\calo$-operators and pre-Lie algebras}
Recently, a notion of weak homomorphisms of classical $r$-matrices was
introduced in~\mcite{TBGS} in associate with a notion of homomorphisms of the corresponding $\calo$-operators.
But this notion of weak homomorphisms is defined only for the skew-symmetric classical $r$-matrices (hence only for triangular Lie bialgebras). It is natural to ask what to expect without the skew-symmetric condition, so that the anticipated homomorphisms of classical $r$-matrices are compatible with a suitably defined homomorphism of Lie bialgebras.

Homomorphisms of pre-Lie algebras are naturally defined, but it is not known how they could be preserved under the correspondence  between  pre-Lie algebras and $\calo$-operators.
\vspace{-.2cm}
\subsection{Our approach}
Thus so far our understanding of the important classes of objects in \meqref{eq:bigdiag} and their relationship mostly remains at the level of sets and set correspondences. Morphisms for these objects are either not defined, or are defined but not compatible with the correspondences.

The goal of this paper is to introduce morphisms for all the classes in the diagram so that the diagram gives functors and natural equivalences of the resulting categories. Since the homomorphisms we will define for the various classes of objects are compatible with the correspondences among the classes, we use the uniform term of {\bf \weak homomorphisms} for the new homomorphisms in all the classes.

To reach our goal, we utilize two strategies. The first strategy is a
polarization process that allows us to first consider endomorphisms instead of homomorphisms. The second strategy is a change of order of the operations which equip Lie
algebras with two extra structures: bialgebras and
endomorphisms.
\vspace{-.1cm}
\subsubsection{Polarization}

Our strategy of polarization is in a sense similar to polarizing a homogeneous polynomial in one variable to a multilinear form in multivariable by linear substitutions and derivations~\mcite{Pr}. The depolarization is then evaluating along a diagonal line. For us depolarization of morphisms in a category gives endomorphisms. So for each of the classes of constructions such as Lie bialgebras, instead of attempting to define morphisms between any two of them, we focus on the special case of endomorphisms on any given object. Taking Lie bialgebras as an example, we first define endomorphisms of a Lie bialgebra, instead of homomorphisms between any two Lie bialgebras.

This depolarization does not reduce the level of difficulty by itself, but allows us to look the endomorphisms of a Lie bialgebra for instance from a different angle and apply the strategy of changing of operation orders.
\vspace{-.1cm}
\subsubsection{Endomorphisms of Lie algebras and bialgebras of Lie algebras with endomorphisms}

With the polarization reduction, our goal is to define endomorphisms of Lie bialgebras and the related structures in diagram \meqref{eq:bigdiag}. To define endomorphisms for a Lie bialgebra, we can regard it as equipping an extra endomorphism structure to the    Lie bialgebra which, by itself, is obtained from equipping a bialgebra structure to a Lie algebra. Thus we are looking at the composition of two processes of equipping two extra structures to a given structure, beginning with a Lie algebra.

Our second strategy is to switch the order of these two processes. This is shown in the following diagram for the instance of Lie bialgebras. Each of the other classes in Diagram~\meqref{eq:bigdiag} is also obtained from equipping a Lie algebra with an extra structure, so can be treated in the same way.
\vspace{-1cm}
\begin{equation}
    \begin{split}
        \xymatrix{
            \text{\small Lie algebras} \ar[rrr]^{\text{\small endomorphism}} \ar[d]^{\text{\small bialgebraization}} &&& \text{\small Endo Lie algebras} \ar[d]^{\text{\small bialgebraization}} \\
            \text{\small Lie bialgebras}\atop
            {=\text{\small Bi-Lie algebras}} \ar[rrr]^{\text{\small endomorphism}}
            &&& {\text{\small Bi-endo Lie algebras}} \atop \text{\small =Endo Lie bialgebras}
        }
    \end{split}
    \mlabel{eq:diag}
\end{equation}
\vspace{-.4cm}

In this diagram, the vertical arrows are equipping a given structure by a suitable bialgebra structure and the horizontal arrows are equipping a given structure with suitable endomorphisms.
Our goal for a notion of endomorphisms
of Lie bialgebras, which amounts to endomorphisms of the bialgebra structures on Lie algebras, is
starting from the Lie algebras in the diagram, going downward and then right.
Instead of attacking this apparently challenging task, we go first right and then downward, that is,  we first equip Lie algebras with endomorphisms, called {\bf endo Lie algebras}. We then attempt to equip a bialgebra structure for endo Lie algebras. The notion of an endo Lie algebra is interesting on its own right: an associative algebra, in particular a commutative associative algebra equipped with an injective endomorphism is called a difference algebra~\mcite{Co,Le} as an algebraic study of difference equations and a close analogy of differential algebras~\mcite{PS}.

The same strategy is applied to the other structures in Diagram~\meqref{eq:bigdiag}: instead of defining endomorphisms of a Lie algebra with the  various  extra structures in the diagram, we define the various extra structures on an endo Lie algebra, resulting in the following diagram.
\vspace{-.2cm}
\begin{equation}
    \begin{split}
        \xymatrix{
            &&&\text{matched pairs of}\atop \text{endo Lie algebras} \\
            \text{endo} \atop \text{pre-Lie algebras} \ar@<.4ex>[r]
            & \mathcal{O}\text{-operators on}\atop\text{endo Lie algebras}\ar@<.4ex>[l] \ar@<.4ex>[r]&
            \text{solutions of}\atop \text{endo CYBE} \ar@<.4ex>[l]
            \ar@2{->}^{}[r]& \text{bialgebras of}\atop \text{endo Lie algebras} \ar@2{<->}_{}[d] \ar@2{<->}^{}[u] \\
            &&& \text{Manin triples of}\atop \text{endo Lie algebras} & }
    \end{split}
    \mlabel{eq:bigdiagend}
\end{equation}
\vspace{-.4cm}

Once constructed, each of the structures on endo Lie algebras naturally gives rise to a notion of endomorphisms of this structure on Lie algebras. Once this is done, then as noted above, a polarization process extends the notion of endomorphisms to a notion of homomorphisms, equipping each class of objects with a category structure.
Furthermore, the correspondences among the classes are naturally functors and, in the case of one-one correspondences, equivalences. These are summarized in the following enriched diagram of \meqref{eq:bigdiag}. Here the labels on the arrows indicate the results where the functors and equivalences are established. It is remarkable that the natural categorical structure of pre-Lie algebras is compatible with the category of $\calo$-operators hereby introduced. In fact, there is a pair of adjoint functors between the two categories.
\vspace{-.6cm}
\begin{equation}
\mlabel{eq:bigdiagcat}
\begin{split}
 \xymatrix{
 &&&&&& {\begin{subarray}{c} \text{category of}\\ \text{matched pairs of}\\ \text{Lie algebras} \end{subarray}}\\
 {\begin{subarray}{c} \text{category of}\\ \text{pre-Lie} \\ \text{algebras}\end{subarray}}  \ar@<.3ex>[rr]^{\rm Prop~\mref{pro:prelieo}}_{\rm Prop~\mref{pro:otopl}}&& {\begin{subarray}{c} \text{category of } \\ \mathcal{O}\text{-operators on} \\ \text{Lie algebras} \end{subarray}}  \ar@<.3ex>[ll]\ar@<.3ex>[rr]^{\rm Cor~ \mref{cor:zero}}&&
{\begin{subarray}{c} \text{category of } \\ \text{solutions of}\\ \text{CYBE}
\end{subarray}} \ar@<.4ex>[ll]^{\rm {Thm}~\mref{pp:oprm}}
 \ar@2{->}^{\rm {Thm}~\mref{co:r12}}[rr]&& {\begin{subarray}{c} \text{category of} \\ \text{Lie bialgebras}\end{subarray}} \ar@2{<->}_{\rm Thm~\mref{thm:equivalence111}}[d] \ar@2{<->}^{\rm Thm~\mref{thm:equivalence111}}[u] \\
&&&&&& {\begin{subarray}{c} \text{category of} \\ \text{Manin triples of}\\ \text{Lie algebras}\end{subarray}}
}
\end{split}
\end{equation}
\vspace{-.5cm}

\subsection{Outline of the paper}

We first introduce in Section~\mref{sec:endolie} the notion of an endo Lie algebra and formulate a bialgebra theory for endo Lie algebras together with their equivalences to  matched pairs and Manin triples of endo Lie algebras.
We then interpret a bialgebra of endo Lie algebras   as an endomorphism  of a Lie bialgebra, which then naturally generalizes to a homomorphism of Lie bialgebras that is compatible with that of Manin triples (and  matched pairs) of Lie algebras, showing that the correspondences among Lie bialgebras, matched pairs of Lie algebras, and Manin triples of Lie algebras are equivalences of categories (Theorem~\mref{thm:equivalence111}).

We then extend in Section~\mref{sec:rmat} the classical relations
of Lie bialgebras with the classical Yang-Baxter
equation as well as  classical  $r$-matrices to the context of endo Lie
algebras. This naturally gives rise to a notion of \weak
homomorphisms for any $r$-matrices, not just the skew-symmetric
ones. This notion is shown to be compatible with the \weak
homomorphisms of Lie bialgebras, leading to a functor of the
corresponding categories (Theorem~\mref{co:r12}).

Finally in Section~\mref{sec:oop}, we give the notion of $\calo$-operators on endo Lie algebras and apply it to define \weak homomorphisms of $\calo$-operators in such a way that is compatible with the \weak homomorphisms of classical  $r$-matrices in Section~\mref{sec:rmat} (Theorem~\mref{pp:oprm} and Corollary~\mref{cor:zero}).
This notion of \weak homomorphisms of $\calo$-operators is moreover compatible with the natural homomorphism of pre-Lie algebras, giving rise to a pair of adjoint functors between the corresponding two categories (Propositions~\mref{pro:prelieo} and \mref{pro:otopl}). We also consider a case where all the constructions can be given explicitly, providing natural examples of \weak isomorphisms of Lie bialgebras that are not the previously defined isomorphisms.
This further justifies the significance of the \weak homomorphisms of Lie bialgebras introduced in this paper.

\smallskip

\noindent
{\bf Notations. } Throughout this paper, all vector spaces, tensor products, and linear homomorphisms are over a fixed field $K$.  Let $\End(V)$ denote the space of linear operators on   a vector space   $V$. The vector spaces and Lie algebras are finite dimensional unless otherwise specified.

\vspace{-.3cm}

\section{Endo Lie algebras and their bialgebras}
\mlabel{sec:endolie}
In this section we introduce the notion of endo Lie algebras and give the equivalent structures of bialgebras, matched pairs and Manin triples for endo Lie algebras.
\vspace{-.1cm}

\subsection{Endo Lie algebras and their representations}
\mlabel{sec:rep} We first introduce the notion of
a representation of an endo Lie algebra characterized by a semi-direct product. We
then introduce the notion of a \drep of an endo Lie algebra in order to construct a reasonable
representation on the dual space.

\begin{defi}
  An {\bf endo Lie algebra} is
a triple $(\frak g,[\;,\;],\phi)$, or simply $(\frak g, \phi)$, where $ (\frak g,[\;,\;])$ is a Lie algebra and  $\phi:\g \rightarrow \g$ is a Lie algebra endomorphism.
\end{defi}
As an analogy of a Lie algebra representation, we have
\begin{defi}
A {\bf representation} of an endo Lie algebra $(\frak g, \phi)$ is a triple $(V, \rho, \alpha)$ where $(V, \rho)$ is a representation of the Lie algebra $\g$ and $\alpha\in \End(V)$ such that
 \begin{equation}
    \alpha (\rho(x)(v))=\rho(\phi(x))(\alpha(v)),\;\;\forall x\in \g, v\in V.
    \mlabel{eq:repn}
\end{equation}Two representations $(V_1, \rho_1, \alpha_1)$ and $(V_2, \rho_2,
\alpha_2)$ of an endo Lie algebra  $(\g, \phi)$ are called {\bf equivalent} if
there exists a linear isomorphism $\varphi:V_1\rightarrow V_2$
such that
 \begin{equation}
\varphi (\rho_1(x)(v))=\rho_2(x)(\varphi(v)),\quad \varphi \alpha_1
(v)=\alpha_2\varphi(v),\quad \forall x\in \g, v\in V_1.
\mlabel{de:2.1} \end{equation}
\end{defi}
With $\ad$  denoting the adjoint representation of the  Lie algebra $\g$, the triple $(\g, \ad,\phi)$ is naturally a representation of the endo Lie algebra $(\g,\phi)$, called the {\bf adjoint representation} of $(\g,\phi)$.

For vector spaces $V_1$ and $V_2$, and linear maps
$\phi_1:V_1\rightarrow V_1$ and $\phi_2:V_2\rightarrow V_2$, we abbreviate $\phi_1+\phi_2$ for the linear map
\begin{equation}
\phi_{V_1\oplus V_2}: V_1\oplus V_2\mto V_1\oplus V_2,
\quad \phi_{V_1\oplus V_2}(v_1+v_2):=\phi_1(v_1)+\phi_2(v_2),\quad
\forall v_1\in V_1, v_2\in V_2. \mlabel{eq:Liehom}
\end{equation}

For a Lie algebra $\g$, a linear space $V$ and a linear map
$\rho:\g\rightarrow {\rm End}(V)$, define a multiplication
$[\cdot,\cdot]_\ltimes$ on $\g\oplus V$ by
\begin{equation}
[x+u,y+v]_\ltimes:=[x,y]+\rho(x)v-\rho(y)u,\;\;\forall x,y\in \g, u,v\in V.
\mlabel{eq:semipro}
\end{equation}
As is well-known, $\g\oplus V$ is a Lie algebra
if and only if $(V,\rho)$ is a representation of $\g$.
The Lie algebra is denoted by $\g\ltimes_\rho V$ and is called the {\bf semi-direct product} of $\g$ and $V$.
Similarly for an endo Lie algebra, we have

\begin{pro}
Let $(\g,\phi)$ be an endo Lie algebra, $(V, \rho)$ a
representation of the Lie algebra $\g$ and $\alpha$ a linear
operator on $V$.  Then $(\g\ltimes_{\rho}V, \phi+\alpha)$ is an
endo Lie algebra if and only if  $(V,\rho,\alpha)$ is a
representation of $(\g,\phi)$.
The resulting endo Lie algebra $(\g\ltimes_{\rho} V, \phi+\alpha)$ is called the {\bf semi-direct product} of the endo Lie algebra $(\g,\phi)$ and its representation $(V,\rho,\alpha)$.
\mlabel{pro:lhpsemi}
\end{pro}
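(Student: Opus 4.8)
The plan is to unpack the phrase ``$(\g\ltimes_{\rho}V, \phi+\alpha)$ is an endo Lie algebra'' into its two constituent requirements and dispose of them separately. The first requirement, that $\g\ltimes_{\rho}V$ be a Lie algebra under the bracket $[\cdot,\cdot]_\ltimes$ of \meqref{eq:semipro}, is already furnished by the hypothesis: since $(V,\rho)$ is a representation of the Lie algebra $\g$, the well-known fact recalled just before the statement guarantees that $\g\oplus V$ is a Lie algebra. Thus all of the content of the proposition lies in the second requirement, namely determining exactly when $\phi+\alpha$, defined in \meqref{eq:Liehom}, is a Lie algebra endomorphism of this semi-direct product. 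I claim this happens precisely when \meqref{eq:repn} holds, so the equivalence of the two sides reduces to this single computation.

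To carry this out, first I would expand both sides of the endomorphism condition
\[
(\phi+\alpha)\big([x+u,y+v]_\ltimes\big) = \big[(\phi+\alpha)(x+u),(\phi+\alpha)(y+v)\big]_\ltimes
\]
for arbitrary $x,y\in\g$ and $u,v\in V$, using \meqref{eq:semipro} and \meqref{eq:Liehom}. On the left one obtains $\phi([x,y]) + \alpha(\rho(x)v) - \alpha(\rho(y)u)$, while on the right one obtains $[\phi(x),\phi(y)] + \rho(\phi(x))\alpha(v) - \rho(\phi(y))\alpha(u)$. The key observation is that the $\g$-components match automatically: because $(\g,\phi)$ is already an endo Lie algebra, $\phi([x,y]) = [\phi(x),\phi(y)]$ with no further hypothesis. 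Consequently the endomorphism condition collapses to the purely $V$-valued identity
\[
\alpha(\rho(x)v) - \alpha(\rho(y)u) = \rho(\phi(x))\alpha(v) - \rho(\phi(y))\alpha(u),
\]
required to hold for all $x,y\in\g$ and $u,v\in V$.

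It then remains to identify this $V$-valued identity with \meqref{eq:repn}. For the forward direction, specializing $u=0$ (and, symmetrically, $v=0$) immediately extracts $\alpha(\rho(x)v) = \rho(\phi(x))\alpha(v)$, which is exactly \meqref{eq:repn}; conversely, if \meqref{eq:repn} holds then the two terms on each side match term by term, so the full identity holds for all arguments. Combined with the automatic agreement of the $\g$-components, this yields the desired equivalence. I do not anticipate a genuine obstacle here: the argument is a direct verification, and its only subtlety is the clean separation of the $\g$- and $V$-components together with the remark that the $\g$-component imposes no constraint since $\phi$ is an endomorphism from the outset. This is precisely why testing the $V$-identity against $u=0$ and $v=0$ separately suffices to recover the full representation condition.
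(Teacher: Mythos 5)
Your proof is correct. The paper, however, gives no standalone argument for this proposition: it derives it as the special case of Theorem~\mref{thm:3.1} on matched pairs of endo Lie algebras, taking $\h := V$ to be the abelian Lie algebra, so that the matched-pair bracket \meqref{eq:3.7} collapses to the semi-direct product bracket \meqref{eq:semipro}. Your direct verification --- expanding $(\phi+\alpha)\big([x+u,y+v]_\ltimes\big)$ against $\big[(\phi+\alpha)(x+u),(\phi+\alpha)(y+v)\big]_\ltimes$, observing that the $\g$-components agree for free because $\phi$ is an endomorphism by hypothesis, and recovering \meqref{eq:repn} from the $V$-component by specializing $u=0$ (resp.\ $v=0$) --- is exactly the computational core of the paper's proof of Theorem~\mref{thm:3.1}, specialized to the abelian case: the paper's substitutions $x=b=0$ and then $a=y=0$ play precisely the role of your $u=0$ and $v=0$. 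What your route buys is a self-contained, elementary proof of the proposition that does not lean on the later and more general theorem; what the paper's route buys is economy, since a single computation serves both the semi-direct product statement and the full matched-pair characterization, which is why the paper chose to fold this proposition's proof into Theorem~\mref{thm:3.1} rather than write it out separately.
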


This result follows as a special case of matched pairs of endo Lie algebras in Theorem~\mref{thm:3.1} in which the Lie algebra $\h:=V$ is the abelian Lie algebra.

We next turn our attention to dual representations of endo Lie algebras.
Denote the usual pairing between the dual space $V^*$ and $V$ by
\begin{equation}\mlabel{eq:pair}
 \langle\, ,\, \rangle : V^*\times V\mto K, ~~\langle w^*, v \rangle :=w^*(v), \quad \forall v\in V, w^*\in V^*.
\end{equation}
For a linear map $\varphi: V\mto W$, the transpose of $\varphi$ is defined by
\begin{equation}
    \mlabel{eq:trans}
\varphi^*: W^*\mto V^*, \quad \varphi^*(w^*)(v):=w^* (\varphi(v)),\;\;\forall w^*\in W^*,v\in V.
\end{equation}
For a representation $(V,\rho)$ of a Lie algebra $\g$, its {\bf dual representation} is the linear map  defined by
 \begin{equation}
\rho^*: \g\mto \End(V^*), \quad  \rho^*(x):=-\rho(x)^* ,~~
   \forall x\in \g.
\mlabel{eq:2.5}
\end{equation}

To obtain the dual representation of an
endo Lie algebra, an extra condition is needed.

\begin{lem}
\mlabel{lem:admrep}
Let $(\g, \phi)$ be an endo Lie algebra.  Let $(V, \rho)$ be a representation of the Lie algebra $\g$.
For $\beta\in \End(V)$, the triple $(V^*,\rho^*,\beta^*)$ is a representation of $(\g,\phi)$ if and only if $\beta$ satisfies
\begin{eqnarray}
\beta (\rho(\phi(x))(v))=\rho(x) (\beta(v)),\;\;\forall x\in \g, v\in V.
\mlabel{eq:dualrepn}
\end{eqnarray}
\end{lem}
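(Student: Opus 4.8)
The plan is to reduce the biconditional to a single operator identity and then transport it across the pairing $\langle\,,\,\rangle$ of \meqref{eq:pair}. Since the dual $(V^*,\rho^*)$ defined in \meqref{eq:2.5} is automatically a representation of the Lie algebra $\g$, the triple $(V^*,\rho^*,\beta^*)$ is a representation of the endo Lie algebra $(\g,\phi)$ if and only if the compatibility \meqref{eq:repn} holds for it, that is, if and only if $\beta^*\rho^*(x) = \rho^*(\phi(x))\beta^*$ for all $x\in\g$. Thus the whole statement comes down to showing that this identity in $\End(V^*)$ is equivalent to \meqref{eq:dualrepn}.

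Next I would evaluate the two sides of $\beta^*\rho^*(x) = \rho^*(\phi(x))\beta^*$ on an arbitrary $w^*\in V^*$ and pair the result against an arbitrary $v\in V$. Unwinding the definition \meqref{eq:2.5} of $\rho^*$ and the transpose definition \meqref{eq:trans} of $\beta^*$ and of $\rho(x)^*$, a routine computation gives $\langle \beta^*\rho^*(x)(w^*), v\rangle = -\langle w^*, \rho(x)\beta(v)\rangle$ on the one hand, and $\langle \rho^*(\phi(x))\beta^*(w^*), v\rangle = -\langle w^*, \beta\rho(\phi(x))(v)\rangle$ on the other. The overall sign is the same on both sides, so it plays no role.

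Finally, since these expressions must agree for all $w^*\in V^*$ and all $v\in V$, the non-degeneracy of the pairing lets me strip off $w^*$, and then the arbitrariness of $v$ lets me strip off $v$, yielding the equivalent condition $\rho(x)\beta(v) = \beta\rho(\phi(x))(v)$ for all $x\in\g$ and $v\in V$, which is precisely \meqref{eq:dualrepn}. Each of the two implications is then obtained by reading this chain of equalities in the appropriate direction.

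I do not expect a genuine obstacle here; the content is bookkeeping with transposes and signs. The one point worth emphasizing is conceptual rather than computational: the condition \meqref{eq:dualrepn} is not the naive dualization of \meqref{eq:repn}. Whereas \meqref{eq:repn} reads $\alpha\rho(x) = \rho(\phi(x))\alpha$, passing to the dual transposes the intertwining relation and hence swaps the position of $\phi$, producing $\beta\rho(\phi(x)) = \rho(x)\beta$ instead. This explains why a genuinely new compatibility, rather than \meqref{eq:repn} applied verbatim, is the extra condition forced on $\beta$ in order for $\beta^*$ to turn $(V^*,\rho^*,\beta^*)$ into a representation of $(\g,\phi)$.
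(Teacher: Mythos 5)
Your proposal is correct and takes essentially the same route as the paper: reduce the claim to the operator identity $\beta^*\rho^*(x)=\rho^*(\phi(x))\beta^*$ coming from Eq.~\meqref{eq:repn}, then unwind it through the definitions of the transpose \meqref{eq:trans} and the dual representation \meqref{eq:2.5} and conclude by nondegeneracy of the pairing \meqref{eq:pair}. The paper's proof is simply a terser statement of the same computation, which you spell out explicitly (including the correct observation that the minus signs cancel and that $\phi$ ends up on the opposite side of the intertwining relation).
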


\begin{proof}
By Eq.~\meqref{eq:repn}, the triple $(V^*,\rho^*,\beta^*)$ is a representation of $(\g,\phi)$ means that
$$ \beta^*(\rho^*(x))=\rho^*(\phi(x))\beta^*, \quad \forall x\in \g.$$
Then the lemma follows from Eqs.~\meqref{eq:trans}, \meqref{eq:2.5} and the nondegenerate pairing in Eq.~\meqref{eq:pair}.
\end{proof}

We reserve a notion for this property due to its pivotal role in our study.

\begin{defi}
Let $(\g, \phi)$ be an endo Lie algebra.  Let $(V, \rho)$ be a representation of the Lie algebra $\g$ and let $\beta\in \End(V)$.
We say that $\beta$ {\bf \dreping the endo Lie algebra $(\g,\phi)$ on $(V,\rho)$} if
$(V^*,\rho^*,\beta^*)$ is a representation of $(\g,\phi)$, that
is, Eq.~\meqref{eq:dualrepn} holds.
When $(V,\rho)$ is taken to be the adjoint representation $(\g,\ad)$ of the Lie algebra $\g$, we simply say that $\beta$ {\bf \dreping $(\g,\phi)$}.
\mlabel{de:admop}
\end{defi}

For later applications, we display some direct consequences.
By Lemma~\mref{lem:admrep} we have
\vspace{-.1cm}
\begin{cor}
Let $(\g, \phi)$ be an endo Lie algebra.
A linear operator
$\psi$ on $\g$ \dreping $(\g,\phi)$ if and
only if
\vspace{-.3cm}
\begin{eqnarray}
\psi[\phi(x),y]=[x,\psi(y)],\;\;\forall x,y\in \g.
 \mlabel{eq:Lieadmiss}
 \end{eqnarray}
\end{cor}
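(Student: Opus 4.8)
The plan is to obtain this corollary as the special case of Lemma~\mref{lem:admrep} in which the representation $(V,\rho)$ is taken to be the adjoint representation $(\g,\ad)$ of the Lie algebra $\g$ and the operator $\beta$ is the given operator $\psi$. By Definition~\mref{de:admop}, the assertion that $\psi$ dually represents $(\g,\phi)$, with no representation named, means by convention that $\psi$ dually represents $(\g,\phi)$ on $(\g,\ad)$, i.e. that $(\g^*,\ad^*,\psi^*)$ is a representation of $(\g,\phi)$. Hence Lemma~\mref{lem:admrep} applies verbatim with $V=\g$, $\rho=\ad$ and $\beta=\psi$, and the equivalence to be proved is exactly the translation of the criterion~\meqref{eq:dualrepn} into bracket notation.

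The one computational point to carry out is that rewriting. First I would recall that for the adjoint representation $\rho(x)=\ad(x)=[x,\,\cdot\,]$ for every $x\in\g$, so that $\rho(\phi(x))(v)=[\phi(x),v]$ and $\rho(x)(\beta(v))=[x,\psi(v)]$. Substituting these into~\meqref{eq:dualrepn} and renaming the vector $v$ as $y$ converts the condition $\beta(\rho(\phi(x))(v))=\rho(x)(\beta(v))$ into precisely $\psi[\phi(x),y]=[x,\psi(y)]$ for all $x,y\in\g$, which is the asserted identity~\meqref{eq:Lieadmiss}.

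There is essentially no obstacle here, since all of the content is carried by Lemma~\mref{lem:admrep}; the only thing to verify is the harmless identification of $\rho$ with the bracket operator under the adjoint representation. I would note in passing that both implications follow at once, as the equivalence in Lemma~\mref{lem:admrep} is stated as an ``if and only if'', so no separate argument for the two directions is needed.
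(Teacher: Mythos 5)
Your proposal is correct and matches the paper's own treatment exactly: the paper derives this corollary as an immediate consequence of Lemma~\mref{lem:admrep} specialized to the adjoint representation $(\g,\ad)$ with $\beta=\psi$, which is precisely your argument. The translation of Eq.~\meqref{eq:dualrepn} into bracket form via $\rho(x)=\ad(x)=[x,\,\cdot\,]$ is the only computation needed, and you carry it out correctly.
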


By Proposition~\mref{pro:lhpsemi} and Definition~\mref{de:admop}, we also have

\begin{cor}
Let $(\g,\phi)$ be an endo Lie algebra,
$(V,\rho)$ be a representation of $\g$ and
$\beta\in \End(V)$. If $\beta$ \dreping $(\g,\phi)$ on $(V,\rho)$, then we have the semi-direct product endo Lie algebra
$(\g\ltimes_{\rho^*}V^*,\phi+\beta^*)$.
\mlabel{co:dualsemipro}
\end{cor}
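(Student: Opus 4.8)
The plan is to observe that this Corollary is an immediate consequence of Proposition~\mref{pro:lhpsemi}, once the hypothesis is translated through Definition~\mref{de:admop}; no new computation is required beyond the results already established in this subsection.

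First I would unravel the hypothesis. By Definition~\mref{de:admop}, saying that $\beta$ \dreping $(\g,\phi)$ on $(V,\rho)$ means precisely that the triple $(V^*,\rho^*,\beta^*)$ is a representation of the endo Lie algebra $(\g,\phi)$ in the sense of the representation definition, that is, $(V^*,\rho^*)$ is a representation of the Lie algebra $\g$ and $\beta^*$ satisfies the compatibility Eq.~\meqref{eq:repn}. Concretely, by Lemma~\mref{lem:admrep} this is equivalent to imposing the condition Eq.~\meqref{eq:dualrepn} on $\beta$ itself, but for the purpose of this proof only the conclusion ``$(V^*,\rho^*,\beta^*)$ is a representation of $(\g,\phi)$'' is needed.

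Next I would apply Proposition~\mref{pro:lhpsemi} directly, taking the representation $(V,\rho,\alpha)$ appearing there to be $(V^*,\rho^*,\beta^*)$. Since the previous step shows that $(V^*,\rho^*,\beta^*)$ is a representation of $(\g,\phi)$, the relevant direction of Proposition~\mref{pro:lhpsemi} yields that $(\g\ltimes_{\rho^*}V^*,\phi+\beta^*)$ is an endo Lie algebra, which is exactly the assertion of the Corollary.

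I expect essentially no obstacle here: the substance has already been absorbed into Proposition~\mref{pro:lhpsemi} (which itself rests on the matched-pair Theorem~\mref{thm:3.1}) and into Lemma~\mref{lem:admrep}. The only thing the proof must do is match the data $(V^*,\rho^*,\beta^*)$ to the input of Proposition~\mref{pro:lhpsemi}, so the argument reduces to a one-line combination of Definition~\mref{de:admop} with that proposition.
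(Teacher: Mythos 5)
Your proposal is correct and matches the paper's own argument: the paper derives this corollary in exactly the same one-line way, citing Proposition~\mref{pro:lhpsemi} together with Definition~\mref{de:admop}, with the hypothesis unpacked so that $(V^*,\rho^*,\beta^*)$ serves as the representation fed into the semi-direct product construction. Nothing is missing.
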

\vspace{-.4cm}
\subsection{Matched pairs of endo Lie algebras}

We first recall the concept of a matched pair of Lie algebras~\mcite{Maj,T}.
\vspace{-.1cm}
\begin{defi}
\mlabel{de:match}
A {\bf matched pair of Lie algebras}  is a
quadruple $(\g,\h,\rho,\mu)$, where $\g:=(\g, [\;,\;]_\g)$ and
$\h:=(\h, [\;,\;]_\h)$ are Lie algebras, $\rho: \g\mto
\End (\h)$ and $\mu: \h\mto \End(\g)$ are linear maps
such that
\begin{enumerate}
\item
$(\g,\mu)$ is a representation of $(\h, [\;,\;]_\h)$,
\item
$(\h, \rho)$ is a representation of $(\g, [\;,\;]_\g)$ and
\item
\mlabel{it:mat3}
the following compatibility conditions hold: for any $x,y\in \g$ and $a,b\in \h$,
 \begin{eqnarray}
 \rho(x)[a,b]_\h-[\rho(x)a,b]_\h-[a,\rho(x)b]_\h+\rho(\mu(a)x)b-\rho(\mu(b)x)a&=&0,
 \mlabel{eq:Liemp1}\\
\mu(a)[x,y]_\g-[\mu(a)x,y]_\g-[x,\mu(a)y]_\g+\mu(\rho(x)a)y-\mu(\rho(y)a)x&=&0.
\end{eqnarray}
\end{enumerate}
\end{defi}

For Lie algebras $(\g, [\;,\;]_\g)$, $(\h, [\;,\;]_\h)$ and linear maps $\rho: \g\mto
\End(\h)$, $\mu:\h\mto \End(\g)$, define a multiplication on
the direct sum $\g\oplus \h$ by
 \begin{equation}
 [x+a, y+b]_{\bowtie}:=[x,y]_\g+\mu(a)y- \mu(b)x+\rho(x)b-\rho(y)a+[a,b]_\h,
 \ \forall x,y \in \g, a, b\in \h.
 \mlabel{eq:3.7}
 \end{equation}
Then by~\mcite{T}, $(\g\oplus \h,
[\;,\;]_{\bowtie})$ is a Lie algebra if and only if
$(\g,\h,\rho,\mu)$ is a matched pair of $\g$ and $\h$. We denote
the resulting Lie algebra $(\g\oplus \h, [\;,\;]_{\bowtie})$
by $\g\bowtie_{\rho}^{\mu} \h$ or simply $\g\bowtie \h$.
Further, for any Lie algebra $\frak l$ whose underlying vector space is a
vector space direct sum of two Lie subalgebras $\g$ and $\h$, there is a
matched pair $(\g,\h,\rho,\mu)$ such that there is an
isomorphism from the resulting Lie algebra $(\g\oplus \h, [\;,\;]_{\bowtie})$ via Eq.~(\mref{eq:3.7}) to the Lie algebra $\frak l$ and the
restrictions of the isomorphism to $\g$ and $\h$ are the identity
maps.

\begin{defi}
A {\bf matched pair of endo Lie
algebras}  is a quadruple $((\g,\phi_\g),(\h,\phi_\h),\rho,\mu)$,
where $(\g, \phi_\g)$ and $(\h, \phi_\h)$ are endo Lie algebras,
$\rho: \g\mto \End (\h)$ and $\mu: \h\mto
\End(\g)$ are linear maps such that
\begin{enumerate}
\item
$(\g,\mu,\phi_\g)$ is a representation of the endo Lie algebra $(\h, \phi_\h)$,
\mlabel{it:emat1}
\item
$(\h, \rho,\phi_\h)$ is a representation of the endo Lie algebra $(\g, \phi_\g)$,
\mlabel{it:emat2}
\item
$(\g,\h,\rho,\mu)$ is a matched pair of Lie algebras.
\mlabel{it:emat3}
\end{enumerate}
\end{defi}

We have the following characterization of matched pairs of endo Lie algebras.
\begin{thm}
Let $(\g, \phi_\g)$ and $(\h, \phi_\h)$ be endo Lie algebras and let
$(\g,\h,\rho,\mu)$ be a matched pair of the Lie algebras $\g$ and
$\h$. Then the pair $(\g\bowtie \h,\phi_{\g}+\phi_{\h})$ is an endo Lie algebra if and only if $((\g, \phi_\g), (\h, \phi_\h), \rho, \mu)$ is a matched pair of endo Lie algebras.
\mlabel{thm:3.1}
 \end{thm}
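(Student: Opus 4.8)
The plan is to reduce the statement to a single endomorphism check. By hypothesis $(\g,\h,\rho,\mu)$ is a matched pair of Lie algebras, so $\g\bowtie\h$ is already a Lie algebra under the bracket $[\;,\;]_{\bowtie}$ of Eq.~\meqref{eq:3.7}. Consequently $(\g\bowtie\h,\phi_\g+\phi_\h)$ is an endo Lie algebra if and only if the single map $\Phi:=\phi_\g+\phi_\h$, defined as in Eq.~\meqref{eq:Liehom}, is a Lie algebra endomorphism of $\g\bowtie\h$. Since condition~\mref{it:emat3} in the definition of a matched pair of endo Lie algebras is exactly this standing hypothesis, the whole problem collapses to showing that the endomorphism property of $\Phi$ is equivalent to the two representation conditions~\mref{it:emat1} and~\mref{it:emat2}.

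To carry this out, I would expand both sides of the identity $\Phi([x+a,y+b]_{\bowtie})=[\Phi(x+a),\Phi(y+b)]_{\bowtie}$ using Eq.~\meqref{eq:3.7}. On the left, applying $\phi_\g$ to the $\g$-valued part and $\phi_\h$ to the $\h$-valued part and using that $\phi_\g$ and $\phi_\h$ are endomorphisms of $\g$ and $\h$ produces $[\phi_\g(x),\phi_\g(y)]_\g$ and $[\phi_\h(a),\phi_\h(b)]_\h$ together with the four mixed terms $\phi_\g(\mu(a)y)-\phi_\g(\mu(b)x)+\phi_\h(\rho(x)b)-\phi_\h(\rho(y)a)$. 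On the right, substituting $\phi_\g(x),\phi_\h(a),\phi_\g(y),\phi_\h(b)$ into Eq.~\meqref{eq:3.7} reproduces the same two pure brackets, now accompanied by $\mu(\phi_\h(a))\phi_\g(y)-\mu(\phi_\h(b))\phi_\g(x)+\rho(\phi_\g(x))\phi_\h(b)-\rho(\phi_\g(y))\phi_\h(a)$.

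The two pure brackets cancel from both sides, and the residual identity splits by target space: the $\g$-valued terms yield $\phi_\g(\mu(a)y)=\mu(\phi_\h(a))\phi_\g(y)$, which is precisely Eq.~\meqref{eq:repn} asserting that $(\g,\mu,\phi_\g)$ is a representation of $(\h,\phi_\h)$, i.e. condition~\mref{it:emat1}; the $\h$-valued terms yield $\phi_\h(\rho(x)b)=\rho(\phi_\g(x))\phi_\h(b)$, which is Eq.~\meqref{eq:repn} asserting that $(\h,\rho,\phi_\h)$ is a representation of $(\g,\phi_\g)$, i.e. condition~\mref{it:emat2}. Reading this equivalence in both directions (the endomorphism identity holds for all arguments iff each component identity holds for all arguments) finishes the argument. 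I do not expect a genuine obstacle here, as the computation is routine once the reduction is made; the only point requiring care is the splitting of the four mixed terms, and the key structural observation is that $\mu(a)y\in\g$ while $\rho(x)b\in\h$, so the mixed terms pair up by codomain with no interference. It is exactly this clean separation that makes the endomorphism condition factor precisely into the two representation conditions~\mref{it:emat1} and~\mref{it:emat2} rather than into some entangled combination of them. As a sanity check, specializing $\h$ to an abelian Lie algebra recovers Proposition~\mref{pro:lhpsemi}.
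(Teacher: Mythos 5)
Your proof is correct and takes essentially the same route as the paper's: both expand $(\phi_\g+\phi_\h)([x+a,y+b]_{\bowtie})$ and $[(\phi_\g+\phi_\h)(x+a),(\phi_\g+\phi_\h)(y+b)]_{\bowtie}$ via Eq.~\meqref{eq:3.7}, use that $\phi_\g,\phi_\h$ are Lie algebra endomorphisms to handle the pure bracket terms, and identify the remaining mixed terms with the two representation conditions of Eq.~\meqref{eq:repn}. The only cosmetic difference is that you disentangle the mixed terms by splitting according to codomain, whereas the paper specializes $x=b=0$ and then $a=y=0$; these amount to the same step.
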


As noted after Proposition~\mref{pro:lhpsemi}, the proposition follows from the theorem as the special case when the linear space $V$ is regarded as an abelian Lie algebra $\h$.
 \begin{proof}
Let $x,y\in\g$ and $a,b\in\h$. Then we have
\begin{eqnarray*}
(\phi_{\g}+\phi_{\h})([x+a,y+b]_{\bowtie}) &=&\phi_\g([x,y]_\g)+\phi_\h(\rho(x)(b))-\phi_\g(\mu(b)(x))\\
&&+\phi_\g(\mu(a)(y))-\phi_\h(\rho(y)(a))+\phi_\h([a,b]_\h),\\
~[(\phi_{\g}+\phi_{\h})(x+a),(\phi_{\g}+\phi_{\h})(y+b)]_{\bowtie}
&=&[\phi_\g(x),\phi_\g(y)]_\g+\rho(\phi_\g(x))\phi_\h(b)-
\mu(\phi_\h(b))\phi_\g(x)\\
&& +\mu(\phi_\h(a))\phi_\g(y)-\rho(\phi_\g(y))\phi_\h(a) +[\phi_\h(a),\phi_\h(b)]_\h.
\end{eqnarray*}
Note that the equality of the left hand sides means that $(\g\bowtie \h,\phi_{\g}+\phi_{\h})$ is an endo Lie algebra, while taking $x=b=0$ and then $a=y=0$ in the equality of the right hand sides yields the first two conditions for $((\g, \phi_\g), (\h, \phi_\h), \rho, \mu)$ to be a matched pair of  endo Lie algebras. Thus the conclusion follows.
\end{proof}
\vspace{-.4cm}
\subsection{Manin triples of endo Lie algebras}
We recall the concept of a Manin triple of Lie algebras. See~\mcite{CP} for details.

\begin{defi}
\mlabel{de:1.4} A bilinear form $\frakB\in(\g\ot\g)^*$ on a Lie
algebra $\g$ is called {\bf invariant} if
\begin{equation}
\mlabel{eq:1.3}
 \mathfrak{B}([x,y], z)=\mathfrak{B}(x, [y,z]),~~\forall~x, y, z\in \g.  \end{equation}
 \end{defi}

As an analog of the notion of a Frobenius (associative) algebra, a
Lie algebra with a nondegenerate symmetric invariant bilinear form
is called a {\bf quadratic Lie algebra}.

Let $(\g,[\;,\;]_\g)$ be a Lie algebra. Suppose that there is
a Lie algebra structure $[\;,\;]_{\g^*}$ on its dual space $\g^*$
and a Lie algebra structure on the vector space direct sum
$\g\oplus \g^\ast$ which contains
both $(\g,[\;,\;]_\g)$ and $(\g^*,[\;,\;]_{\g^*})$ as Lie
subalgebras. Define a bilinear form on $\g\oplus \g^*$ by
\begin{equation}
\frakB_d(x + a^* ,y + b^* ): = \langle x,b^*\rangle + \langle a^*
, y\rangle, \quad \forall a^*, b^* \in \g^* , x, y \in \g.
\mlabel{eq:3.9}
\end{equation}
If $\frakB_d$ is invariant, then $(\g\oplus \g^*,\frakB_d)$ is a
quadratic Lie algebra and the triple $(\g\oplus\g^*,\g, \g^*)$
 of Lie algebras  is called a (standard) {\bf
Manin triple of Lie algebras} associated to $(\g,[\;,\;]_\g)$ and
$(\g^*,[\;,\;]_{\g^*})$.
This Lie algebra on $\g\oplus \g^*$ comes from a
matched pair of $\g$ and $\g^*$ in Eq.~(\mref{eq:3.7}), and hence
will also be denoted by $\g\bowtie\g^*$. Indeed, we have

\begin{thm}
\mlabel{thm:frob}
$($\mcite{CP}$)$ Let $(\g,[\;,\;]_\g)$ be a Lie algebra. Suppose
that there is a Lie algebra structure $[\;,\;]_{\g^*}$ on its dual
space $\g^\ast$. Then there is a Manin triple of Lie algebras
associated to $(\g,[\;,\;]_\g)$ and $(\g^*,[\;,\;]_{\g^*})$ if and
only if $(\g,\g^*, \ad_\g^*,\ad_{\g^*}^*)$  is a matched pair of
Lie algebras.
\end{thm}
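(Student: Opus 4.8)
The plan is to reduce the statement to the matched-pair description of Lie algebra structures on the direct sum $\g\oplus\g^*$ recalled after Eq.~\meqref{eq:3.7}, and then to identify exactly when the canonical bilinear form $\frakB_d$ of \meqref{eq:3.9} is invariant. Taking $\h:=\g^*$ in that description, any Lie algebra structure on $\g\oplus\g^*$ having both $\g$ and $\g^*$ as Lie subalgebras is the bracket $[\;,\;]_{\bowtie}$ of Eq.~\meqref{eq:3.7} for a matched pair $(\g,\g^*,\rho,\mu)$ with $\rho:\g\mto\End(\g^*)$ and $\mu:\g^*\mto\End(\g)$, the maps $\rho,\mu$ being read off as the $\g^*$- and $\g$-components of the bracket and hence determined by it. Thus a Manin triple associated to $(\g,[\;,\;]_\g)$ and $(\g^*,[\;,\;]_{\g^*})$ amounts to such a matched pair for which $\frakB_d$ is invariant, and the whole theorem comes down to showing that $\frakB_d$ is invariant precisely when $\rho=\ad_\g^*$ and $\mu=\ad_{\g^*}^*$.

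For that central step I would expand the invariance identity $\frakB_d([u,v]_{\bowtie},w)=\frakB_d(u,[v,w]_{\bowtie})$ on homogeneous arguments $u,v,w$, each lying in $\g$ or in $\g^*$. Since $\frakB_d$ pairs $\g$ with $\g^*$ and vanishes on $\g\times\g$ and on $\g^*\times\g^*$, the cases with all three arguments in $\g$, or all three in $\g^*$, are vacuous. The cases with two arguments in $\g$ and one in $\g^*$ collapse, using Eq.~\meqref{eq:3.7} and the pairing convention in \meqref{eq:pair} (where $\langle x,b^*\rangle$ is read as $b^*(x)$), to the single identity $(\rho(y)c^*)(x)=c^*([x,y]_\g)$ for all $x,y\in\g$, $c^*\in\g^*$; by the definition $\ad_\g^*(y)=-\ad_\g(y)^*$ of the dual representation this is exactly $\rho=\ad_\g^*$. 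Symmetrically, the cases with two arguments in $\g^*$ and one in $\g$ collapse to $a^*(\mu(b^*)z)=[a^*,b^*]_{\g^*}(z)$, which, under the identification $\g\cong(\g^*)^*$, is exactly $\mu=\ad_{\g^*}^*$. The remaining mixed cases are then automatically satisfied, being consequences of the antisymmetry of the two brackets together with the two relations just obtained.

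With this equivalence in hand, both implications follow at once. If a Manin triple exists, its bracket on $\g\oplus\g^*$ yields a matched pair $(\g,\g^*,\rho,\mu)$, and invariance of $\frakB_d$ forces $\rho=\ad_\g^*$, $\mu=\ad_{\g^*}^*$, so $(\g,\g^*,\ad_\g^*,\ad_{\g^*}^*)$ is a matched pair. Conversely, if $(\g,\g^*,\ad_\g^*,\ad_{\g^*}^*)$ is a matched pair, then Eq.~\meqref{eq:3.7} equips $\g\oplus\g^*$ with a Lie algebra structure containing $\g$ and $\g^*$ as subalgebras, and the same equivalence shows that $\frakB_d$ is invariant, producing the desired Manin triple.

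I anticipate the main obstacle to be the purely computational bookkeeping in the central step: correctly tracking the signs introduced by the dual representations $\rho^*(x)=-\rho(x)^*$, handling the canonical identification $\g\cong(\g^*)^*$ needed to read $\mu=\ad_{\g^*}^*$ as a statement about operators on $\g$, and verifying that the mixed cases genuinely add no new constraint rather than a hidden compatibility condition. Beyond this the argument is structural, resting entirely on the matched-pair correspondence of \meqref{eq:3.7} and the vanishing pattern of $\frakB_d$.
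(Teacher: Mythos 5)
The paper gives no proof of this theorem; it is quoted from the reference [CP], so there is no in-paper argument to compare against. Your proposal is correct and is essentially the standard proof that the cited source uses: invoke the correspondence (recalled after Eq.~\eqref{eq:3.7}) between Lie algebra structures on $\g\oplus\g^*$ admitting $\g$ and $\g^*$ as subalgebras and matched pairs $(\g,\g^*,\rho,\mu)$, then check on homogeneous triples that invariance of $\frakB_d$ holds precisely when $\rho=\ad_\g^*$ and $\mu=\ad_{\g^*}^*$; the signs work out exactly as you state, since $(\ad_\g^*(y)c^*)(x)=c^*([x,y]_\g)$ and, under $\g\cong(\g^*)^*$, $(\ad_{\g^*}^*(b^*)z)(a^*)=[a^*,b^*]_{\g^*}(z)$. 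One small wording quibble: the ``two-in-$\g$, one-in-$\g^*$'' cases and the ``two-in-$\g^*$, one-in-$\g$'' cases already exhaust all mixed configurations, so your final sentence about ``remaining mixed cases'' really refers to the two redundant argument positions within each class, which, as you say, reduce to the single identity via antisymmetry of the brackets; this is a matter of phrasing, not a gap.
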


For endo Lie algebras, we give the following definition.
\begin{defi}
\mlabel{de:1.3}
A {\bf quadratic endo Lie algebra} is a triple $(\g,\phi,\frakB)$ where
$(\g,\phi)$ is an endo Lie algebra and $(\g,\frakB)$ is a quadratic Lie algebra.

Let $\widehat{\phi}:\g\mto \g$
denote the {\bf adjoint linear transformation of $\phi$} under the
nondegenerate bilinear form $\frakB$:
\begin{equation}
\mlabel{eq:adjoint}
\mathfrak{B}(\phi(x), y)=\mathfrak{B}(x, \widehat{\phi}(y)),\;\;\forall x,y\in \g.
\end{equation}
\end{defi}

\begin{pro}
Let $(\g,\phi,\frakB)$ be a quadratic endo Lie algebra. Let $\widehat
\phi$ be the adjoint of $\phi$.
Then $\widehat{\phi}$ \dreping $(\g,\phi)$. In other words, $(\g^*,
\ad^*,{\widehat{\phi}\,}^*)$ is a representation of the endo Lie algebra $(\g,\phi)$.
Furthermore, as representations of $(\g,\phi)$, $(\g, \ad, \phi)$ and $(\g^*,
\ad^*,{\widehat{\phi}\,}^*)$ are equivalent.

Conversely, let $(\g,\phi)$ be an endo Lie
algebra and $\psi\in \End(\g)$
dually represent $(\g,\phi)$. If the  representation $(\g^*,
\ad^*,\psi^*)$ of $(\g,\phi)$ is equivalent to $(\g,\ad,\phi)$,
then there exists a nondegenerate invariant bilinear form
$\frakB$ on $\g$ such that  $\widehat{\phi}=\psi$.
\mlabel{pp:frobadm}
\end{pro}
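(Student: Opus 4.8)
The plan is to use the nondegenerate form to transport everything between $\g$ and $\g^*$ via the musical isomorphism $\varphi\colon\g\to\g^*$, $\langle\varphi(x),y\rangle:=\frakB(x,y)$. For the forward direction I would first verify that $\widehat{\phi}$ \dreping $(\g,\phi)$ by checking the characterization in Eq.~\meqref{eq:Lieadmiss}, namely $\widehat{\phi}[\phi(x),y]=[x,\widehat{\phi}(y)]$, and then verify the equivalence of representations by showing that $\varphi$ satisfies the two conditions in Eq.~\meqref{de:2.1}.

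The heart of the matter is the verification of Eq.~\meqref{eq:Lieadmiss} for $\widehat{\phi}$. Pairing both sides against an arbitrary $z\in\g$ and invoking nondegeneracy, it suffices to prove $\frakB(\widehat{\phi}[\phi(x),y],z)=\frakB([x,\widehat{\phi}(y)],z)$. The point I want to stress is that the obvious manipulation is \emph{circular}: merely moving $\widehat{\phi}$ across the form reproduces the very identity to be shown. The decisive idea is instead to force the endomorphism property of $\phi$ into play by manufacturing a bracket $[\phi(x),\phi(z)]=\phi([x,z])$. Concretely, using Eq.~\meqref{eq:adjoint} (together with symmetry of $\frakB$), the invariance and antisymmetry of the bracket, and the endomorphism property, I compute
\begin{align*}
\frakB(\widehat{\phi}[\phi(x),y],z)&=\frakB([\phi(x),y],\phi(z))=-\frakB(y,[\phi(x),\phi(z)])\\
&=-\frakB(y,\phi([x,z]))=-\frakB(\widehat{\phi}(y),[x,z])=\frakB([x,\widehat{\phi}(y)],z).
\end{align*}
By the characterization in Eq.~\meqref{eq:Lieadmiss} (the specialization of Lemma~\mref{lem:admrep} to the adjoint representation), this shows $(\g^*,\ad^*,\widehat{\phi}^{\,*})$ is a representation of $(\g,\phi)$. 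I expect this single computation to be the main obstacle, precisely because one must avoid the circular route and instead pass through $\phi([x,z])$.

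For the equivalence of $(\g,\ad,\phi)$ and $(\g^*,\ad^*,\widehat{\phi}^{\,*})$ I would use $\varphi$ as the intertwiner. The first condition $\varphi(\ad(x)v)=\ad^*(x)\varphi(v)$ unwinds, through Eq.~\meqref{eq:2.5}, to the identity $\frakB([x,v],y)=-\frakB(v,[x,y])$, which follows from the invariance of Definition~\mref{de:1.4} together with the antisymmetry of the bracket (the two forms of invariance are equivalent in the presence of an antisymmetric bracket). The second condition $\varphi(\phi(v))=\widehat{\phi}^{\,*}\varphi(v)$ is, after evaluation, exactly the defining relation Eq.~\meqref{eq:adjoint} of $\widehat{\phi}$. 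Since $\frakB$ is nondegenerate, $\varphi$ is an isomorphism, so the two representations are equivalent.

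For the converse I would simply reverse this construction. Given the equivalence, let $\varphi\colon\g\to\g^*$ be the intertwining isomorphism (inverting it if the equivalence is presented in the other direction) and set $\frakB(x,y):=\langle\varphi(x),y\rangle$; nondegeneracy of $\frakB$ is immediate from $\varphi$ being an isomorphism. Translating the first equivalence condition through Eq.~\meqref{eq:2.5} yields $\frakB([x,y],z)=-\frakB(y,[x,z])$, and combined with the antisymmetry of the bracket this is equivalent to the invariance $\frakB([x,y],z)=\frakB(x,[y,z])$ of Definition~\mref{de:1.4}. Translating the second condition $\varphi(\phi(v))=\psi^{*}\varphi(v)$ gives $\frakB(\phi(v),w)=\frakB(v,\psi(w))$, which by Eq.~\meqref{eq:adjoint} says precisely $\widehat{\phi}=\psi$. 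I anticipate no obstacle here beyond bookkeeping, since the identities needed are exactly those of the forward direction read in reverse.
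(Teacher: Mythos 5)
Your proposal is correct and takes essentially the same approach as the paper: the core identity $\widehat{\phi}[\phi(x),y]=[x,\widehat{\phi}(y)]$ is obtained by moving $\phi$ and $\widehat{\phi}$ across the nondegenerate invariant form so as to invoke the endomorphism property, and both the equivalence and its converse use the musical isomorphism $\varphi(x):=\frakB(x,\cdot)$ as the intertwiner. The only cosmetic difference is that the paper starts from $0=\frakB([\phi(x),\phi(y)],z)-\frakB(\phi[x,y],z)$ and applies nondegeneracy in the first argument, whereas you chain from $\frakB(\widehat{\phi}[\phi(x),y],z)$ to $\frakB([x,\widehat{\phi}(y)],z)$ and apply it in the second.
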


\begin{proof}
For any $x,y,z\in \g$, we have
\begin{eqnarray*}
0&=& \frakB([\phi(x),\phi(y)],z)-\frakB(\phi[x,y],z)\\
&=&\frakB(\phi(x),[\phi(y),z])-\frakB([x,y],\widehat\phi(z))\\
&=&\frakB(x,\widehat\phi([\phi(y),z]))-\frakB(x,[y,\widehat\phi(z)]).
\end{eqnarray*}
Thus
$\widehat{\phi}([\phi(y),z])=[y,\widehat\phi(z)].$ Hence $(\g^*,
\ad^*,{\widehat{\phi\,}}^*)$ is a representation of $(\g,\phi)$. Define a
linear map $\varphi:\g\mto \g^*$ by
$$\varphi(x)(y):=\frakB(x,y),\;\;\forall x,y\in \g.$$
Since the bilinear form $\frakB$ is nondegenerate, the linear map
$\varphi$ is a linear isomorphism. Moreover, for any $x,y,z\in A$,
we have
\begin{eqnarray*}
\langle \varphi (\ad(x)y),z\rangle&=&\frakB([x,y],z)=\frakB([z,x],y)=\langle \varphi (y), [z,x]\rangle
=\langle \ad^*(x)\varphi(y), z\rangle,\\
\langle \varphi (\phi(x)), y\rangle &=&\frakB(\phi(x), y)=\frakB(x, \widehat
\phi(y))=\langle \varphi(x), \widehat{\phi}(y)\rangle=\langle {\widehat{\phi}\,}^*(\varphi(x)),y\rangle.
\end{eqnarray*}
Hence $(\g, \ad, \phi)$ is equivalent
to $(\g^*, \ad^*,{\widehat{\phi\,}}^*)$ as representations of $(\g,\phi)$.

Conversely, suppose that $\varphi:\g\rightarrow \g^*$ is a
linear isomorphism giving the equivalence between $(\g, \ad,
\phi)$ and $(\g^*,\ad^*,\psi^*)$. Define a bilinear form
$\frakB$ on $\g$ by
$$\frakB(x,y):=\langle \varphi(x), y\rangle,\;\;\forall x,y\in \g.$$
Then by a similar argument as above, we show that $\frakB$ is
a nondegenerate invariant bilinear form on $\g$ such that
$\widehat{\phi}=\psi$.
\end{proof}

We now extend the notion of Manin triples from the context of Lie algebras to
that of endo Lie algebras.

 \begin{defi}   \mlabel{de:3.4}
Let $(\g, \phi)$ be an endo Lie algebra. Suppose that $(\g^*,\psi^*)$ is also an endo Lie algebra.
A {\bf Manin triple of endo Lie algebras} associated to $(\g,\phi)$ and $(\g^*,\psi^*)$ is a Manin triple $(\g\bowtie \g^*, \g,\g^*)$ of Lie algebras
  such that $(\g\bowtie \g^*,\phi+ \psi^*, \frakB_d)$ is a quadratic endo Lie algebra. We use $((\g\bowtie \g^*,\phi+\psi^*),(\g,\phi),(\g^*,\psi))$ to denote this Manin triple.
\end{defi}

\begin{lem}
Let $(\g\bowtie \g^*,\phi+ \psi^*, \frakB_d)$ be a quadratic endo Lie algebra.
\begin{enumerate}
\item The adjoint $\widehat{ \phi+\psi^*}$ of $\phi+\psi^*$ with respect to $\frakB_d$ is $\psi+ \phi^*$. Further $\psi+\phi^*$
\dreping the endo Lie algebra $(\g\bowtie \g^*, \phi+\psi^*)$.
\mlabel{it:abas1}
\item $\psi$ \dreping the endo Lie algebra $(\g, \phi)$.
\mlabel{it:abas2}
\item $\phi^*$ \dreping the endo Lie algebra $(\g^*, \psi^*)$.
\mlabel{it:abas3}
\end{enumerate}
\mlabel{lem:abas}
\end{lem}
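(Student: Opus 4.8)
The plan is to prove (a) first, by directly identifying the adjoint $\widehat{\phi+\psi^*}$ and then invoking Proposition~\mref{pp:frobadm}, and then to obtain (b) and (c) by restricting the dual representation identity from (a) to the two Lie subalgebras $\g$ and $\g^*$. For the adjoint, I would evaluate the defining relation $\frakB_d((\phi+\psi^*)(X),Y)=\frakB_d(X,\widehat{\phi+\psi^*}(Y))$ on $X=x+a^*$ and $Y=y+b^*$. Expanding the left-hand side with the formula Eq.~\meqref{eq:3.9} for $\frakB_d$ gives $\langle\phi(x),b^*\rangle+\langle\psi^*(a^*),y\rangle$, and the transpose identities $\langle\phi(x),b^*\rangle=\langle x,\phi^*(b^*)\rangle$ and $\langle\psi^*(a^*),y\rangle=\langle a^*,\psi(y)\rangle$ from Eq.~\meqref{eq:trans} show that this equals $\frakB_d(x+a^*,\psi(y)+\phi^*(b^*))=\frakB_d(X,(\psi+\phi^*)(Y))$. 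Since $\frakB_d$ is nondegenerate, this forces $\widehat{\phi+\psi^*}=\psi+\phi^*$. The remaining assertion of (a) is then immediate: $(\g\bowtie\g^*,\phi+\psi^*,\frakB_d)$ is a quadratic endo Lie algebra by hypothesis, so Proposition~\mref{pp:frobadm} yields that its adjoint \dreping $(\g\bowtie\g^*,\phi+\psi^*)$, and that adjoint has just been identified with $\psi+\phi^*$.

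For (b) and (c) I would use the criterion of the corollary following Lemma~\mref{lem:admrep}. By Eq.~\meqref{eq:Lieadmiss} applied to the endo Lie algebra $(\g\bowtie\g^*,\phi+\psi^*)$ and the operator $\psi+\phi^*$, part (a) is equivalent to
\[
(\psi+\phi^*)[(\phi+\psi^*)(X),Y]_\bowtie=[X,(\psi+\phi^*)(Y)]_\bowtie,\qquad \forall\, X,Y\in\g\bowtie\g^*.
\]
Restricting to $X=x,\ Y=y\in\g$ and using that $\g$ is a Lie subalgebra together with $(\phi+\psi^*)|_\g=\phi$ and $(\psi+\phi^*)|_\g=\psi$, this collapses to $\psi[\phi(x),y]_\g=[x,\psi(y)]_\g$, which is precisely Eq.~\meqref{eq:Lieadmiss} certifying that $\psi$ \dreping $(\g,\phi)$; this is (b). Restricting instead to $X=a^*,\ Y=b^*\in\g^*$ and using $(\phi+\psi^*)|_{\g^*}=\psi^*$, $(\psi+\phi^*)|_{\g^*}=\phi^*$ and that $\g^*$ is a Lie subalgebra gives $\phi^*[\psi^*(a^*),b^*]_{\g^*}=[a^*,\phi^*(b^*)]_{\g^*}$, which is Eq.~\meqref{eq:Lieadmiss} certifying that $\phi^*$ \dreping $(\g^*,\psi^*)$; this is (c).

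The only genuine computation is the adjoint identification in (a); parts (b) and (c) are then purely formal consequences. The step requiring care is the bookkeeping of the pairing and transpose conventions in (a), and the verification that $\phi+\psi^*$ and $\psi+\phi^*$ restrict to the stated operators on $\g$ and on $\g^*$ — which holds because $\phi,\psi$ preserve $\g$ and $\phi^*,\psi^*$ preserve $\g^*$, while $\g$ and $\g^*$ are Lie subalgebras of the Manin triple $\g\bowtie\g^*$. I do not anticipate a real obstacle, as the lemma merely repackages a single adjoint computation through Proposition~\mref{pp:frobadm} and the criterion Eq.~\meqref{eq:Lieadmiss}.
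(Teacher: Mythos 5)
Your proposal is correct and follows essentially the same route as the paper's proof: the same expansion of $\frakB_d\big((\phi+\psi^*)(x+a^*),y+b^*\big)$ via Eq.~\meqref{eq:3.9} and the transpose identities to identify the adjoint as $\psi+\phi^*$, the same appeal to Proposition~\mref{pp:frobadm} for the dual-representation claim in (a), and the same specialization of the identity from Eq.~\meqref{eq:Lieadmiss} (setting $a^*=b^*=0$ for (b) and $x=y=0$ for (c)). Your extra remarks on why $\phi+\psi^*$ and $\psi+\phi^*$ restrict correctly to the subalgebras $\g$ and $\g^*$ merely make explicit what the paper leaves implicit.
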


\begin{proof}
(\mref{it:abas1}) For any $x,y\in \g, a^*,b^*\in \g^*$, we apply
Eq.~\meqref{eq:3.9} to give
\begin{eqnarray*}
\frakB_d\big((\phi+\psi^*)(x+a^*),y+b^*\big)&=&\langle \phi(x),b^*\rangle + \langle \psi^*(a^*), y\rangle\\
&=&\langle x, \phi^*(b^*)\rangle + \langle a^*, \psi(y)\rangle\\
&=&\frakB_d(x+a^*, (\psi+\phi^*)(y+b^*)).
\end{eqnarray*}
Hence the adjoint $\widehat{ \phi+\psi^*}$ of $\phi+\psi^*$ with respect to
$\frakB_d$ is $\psi+ \phi^*$. By Proposition~\mref{pp:frobadm}, for the
quadratic Lie algebra $(\g\bowtie \g^*,\phi+ \psi^*, \frakB_d)$, the linear map $\widehat{ \phi+\psi^*}=\psi+\phi^*$ \dreping $(\g\bowtie
\g^*, \phi+\psi^*)$.

\smallskip

\noindent
 (\mref{it:abas2}) By Item~(\mref{it:abas1}), $\psi+\phi^*$ \dreping $(\g\bowtie \g^*, \phi+\psi^*)$. By Eq.~(\mref{eq:Lieadmiss}), this is the case if and only if for any $x,y\in \g, a^*,b^*\in \g^*$,
 \begin{eqnarray*}
 (\psi+\phi^*)([(\phi+\psi^*)(x+a^*),y+b^*]_{\bowtie})=[x+a^*,(\psi+\phi^*)(y+b^*)]_{\bowtie}.
\end{eqnarray*}
Now taking $a^*=b^*=0$ in the above equation, we
have $\psi[\phi(x),y]_\g=[x,\psi(y)]_\g,$ that is, $\psi$ \dreping
$(\g,\phi)$.
\smallskip

\noindent
 (\mref{it:abas3}) Likewise, taking $x=y=0$ in the above equation yields
$ \phi^*[\psi^*(a^*),b^*]_{\g^*}=[a^*,\phi^*(b^*)]_{\g^*},$
that is, $\phi^*$ \dreping $(\g^*, \psi^*)$.
\end{proof}

Enriching Theorem~\mref{thm:frob} to the context of endo Lie algebras, we obtain

\begin{thm} Let $(\g,\phi)$ be an endo Lie algebra. Suppose that there is an endo Lie algebra structure $(\g^*,\psi^*)$ on its dual space $\g^\ast$. Then there is a Manin triple of
endo Lie algebras $((\g\bowtie \g^*,\phi+\psi^*), (\g,\phi),(\g^*,\psi^*))$ associated to $(\g,\phi)$ and
$(\g^*,\psi^*)$ if and only if $((\g,\phi),(\g^*,\psi^*),
\ad_\g^*,\ad^*_{\g^*})$ is a matched pair of endo Lie algebras.
\mlabel{thm:3.7}
\end{thm}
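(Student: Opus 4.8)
The plan is to treat this statement as the ``endo-enhancement'' of the classical Theorem~\mref{thm:frob}, separating the genuinely new endomorphism data from the purely classical Manin-triple data and handling the former with Theorem~\mref{thm:3.1}. First I would unwind Definition~\mref{de:3.4}: a Manin triple of endo Lie algebras associated to $(\g,\phi)$ and $(\g^*,\psi^*)$ is a Manin triple $(\g\bowtie\g^*,\g,\g^*)$ of Lie algebras together with the requirement that $(\g\bowtie\g^*,\phi+\psi^*,\frakB_d)$ be a quadratic endo Lie algebra. Since the form $\frakB_d$ of Eq.~\meqref{eq:3.9} is automatically nondegenerate and symmetric, and its invariance is precisely the condition defining a Manin triple of Lie algebras, the quadratic-endo requirement contributes exactly one new piece of data beyond the classical picture: that $\phi+\psi^*$ be a Lie algebra endomorphism of $\g\bowtie\g^*$. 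Thus the existence of a Manin triple of endo Lie algebras is equivalent to the conjunction of two conditions, namely that $(\g\bowtie\g^*,\g,\g^*)$ be a Manin triple of Lie algebras and that $\phi+\psi^*$ be a Lie endomorphism of $\g\bowtie\g^*$.

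I would then split this conjunction along the matched-pair conditions. By Theorem~\mref{thm:frob}, the existence of the Manin triple of Lie algebras is equivalent to $(\g,\g^*,\ad_\g^*,\ad^*_{\g^*})$ being a matched pair of Lie algebras, which is exactly condition~(\mref{it:emat3}) in the definition of a matched pair of endo Lie algebras. Granting this matched-pair-of-Lie-algebras condition, Theorem~\mref{thm:3.1}, applied with $\h=\g^*$, $\phi_\h=\psi^*$, $\rho=\ad_\g^*$ and $\mu=\ad^*_{\g^*}$, asserts that $\phi+\psi^*$ is a Lie endomorphism of $\g\bowtie\g^*$ if and only if the remaining conditions~(\mref{it:emat1}) and~(\mref{it:emat2}) hold, i.e. $(\g,\ad^*_{\g^*},\phi)$ is a representation of the endo Lie algebra $(\g^*,\psi^*)$ and $(\g^*,\ad_\g^*,\psi^*)$ is a representation of the endo Lie algebra $(\g,\phi)$. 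Assembling the three conditions yields precisely the assertion that $((\g,\phi),(\g^*,\psi^*),\ad_\g^*,\ad^*_{\g^*})$ is a matched pair of endo Lie algebras.

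I would present the resulting biconditional by running the two implications in parallel. For the backward implication, a matched pair of endo Lie algebras supplies condition~(\mref{it:emat3}), hence an invariant $\frakB_d$ by Theorem~\mref{thm:frob}, while Theorem~\mref{thm:3.1} converts conditions~(\mref{it:emat1}) and~(\mref{it:emat2}) into the endomorphism property of $\phi+\psi^*$; together these make $(\g\bowtie\g^*,\phi+\psi^*,\frakB_d)$ a quadratic endo Lie algebra, which is the required Manin triple. For the forward implication, a Manin triple of endo Lie algebras yields both the invariance of $\frakB_d$ (hence condition~(\mref{it:emat3}) via Theorem~\mref{thm:frob}) and the endomorphism property of $\phi+\psi^*$, from which Theorem~\mref{thm:3.1} extracts conditions~(\mref{it:emat1}) and~(\mref{it:emat2}). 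Alternatively, these last two conditions can be read directly off Lemma~\mref{lem:abas}: its items~(\mref{it:abas2}) and~(\mref{it:abas3}) record exactly that $\psi$ \dreping $(\g,\phi)$ and that $\phi^*$ \dreping $(\g^*,\psi^*)$, and by the dual-representation characterization (Eq.~\meqref{eq:Lieadmiss} and its analogue) these are the same as conditions~(\mref{it:emat2}) and~(\mref{it:emat1}).

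The step I expect to demand the most care is the bookkeeping of the first two paragraphs rather than any computation: one must correctly peel the ``quadratic'' part of Definition~\mref{de:3.4} (which is classical and dispatched by Theorem~\mref{thm:frob}) away from the new ``endo'' part, and verify that the representations furnished by Theorem~\mref{thm:frob} are exactly $\ad_\g^*$ and $\ad^*_{\g^*}$, so that the hypotheses of Theorem~\mref{thm:3.1} line up with the maps appearing in the matched pair of endo Lie algebras. Once this alignment is confirmed, the equivalence is a formal consequence of Theorems~\mref{thm:frob} and~\mref{thm:3.1}, requiring no further explicit calculation.
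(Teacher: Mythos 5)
Your proposal is correct and takes essentially the same route as the paper: both reduce the statement to the classical equivalence of Theorem~\mref{thm:frob} for the Manin-triple/matched-pair part and to Theorem~\mref{thm:3.1} for the endomorphism conditions, the only cosmetic difference being that the paper extracts the two endo-representation conditions in the forward direction via Lemma~\mref{lem:abas} while you primarily invoke the biconditional of Theorem~\mref{thm:3.1} there (and correctly note the Lemma~\mref{lem:abas} alternative). Your care in checking that the Lie algebra structure furnished by Theorem~\mref{thm:frob} is exactly the one built from $\ad_\g^*$ and $\ad^*_{\g^*}$, so that Theorem~\mref{thm:3.1} applies with those maps, matches the paper's implicit use of the same fact.
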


\begin{proof}
($\Longrightarrow$) By the assumption, there is a Manin triple of
endo Lie algebras $((\g\bowtie \g^*,\phi+\psi^*),
(\g,\phi),(\g^*,\psi^*))$ associated to $(\g,\phi)$ and
$(\g^*,\psi^*)$. Then in particular $(\g\bowtie \g^*,\g,\g^*)$ is
a Manin triple of Lie algebras associated to $\g$ and $\g^*$.
Hence by Theorem~\mref{thm:frob},
$(\g,\g^*,\ad_\g^*,\ad^*_{\g^*})$ is a matched pair of Lie
algebras for which the Lie algebra on $\g\oplus \g^*$ is the Lie
algebra $\g\bowtie \g^*$.
Since the homomorphism on $\g\bowtie \g^*$  is $\phi+\psi^*$,
by Lemma~\ref{lem:abas}, $(\g^*,\ad_\g^*,\psi^*)$ and $(\g,\ad_{\g^*}^*,\phi)$ are
representations of the endo Lie algebras $(\g,\phi)$ and $(\g^*,\psi^*)$ respectively. Hence $((\g,\phi),(\g^*,\psi^*),
\ad_\g^*,\ad^*_{\g^*})$ is a matched pair of endo Lie algebras.

\smallskip

\noindent ($\Longleftarrow$) If $((\g,\phi),(\g^*,\psi^*),
\ad_\g^*,\ad^*_{\g^*})$ is a matched pair of endo Lie algebras,
then $(\g,\g^*,\ad_\g^*,\ad^*_{\g^*})$ is a matched pair of Lie
algebras. Hence by Theorem~\mref{thm:frob} again, $\frakB_d$ is
invariant on $\g\bowtie \g^*$. By Theorem~\mref{thm:3.1}, the
matched pair of endo Lie algebras also equips the Lie algebra
$\g\bowtie \g^*$ with the endomorphism  $\phi+\psi^*$, giving us a
quadratic endo Lie algebra. This is exactly what we need.
\end{proof}
\vspace{-.4cm}
\subsection{Endo Lie bialgebras}

With our previous preparations, we are ready to introduce the
notion of endo Lie bialgebras, as an enrichment of the notion of
Lie bialgebras that we now recall and refer the reader to~\mcite{CP} for details.

\begin{thm}
\mlabel{thm:md}
Let $(\g,[\;,\;]_\g)$ be a Lie algebra. Suppose that there is a
Lie algebra $(\g^*, [\;,\;]_{\g^*})$ on the linear dual $\g^*$ of
$\g$. Let $\delta:\g\mto \g\ot \g$ denote the linear dual of
the multiplication $[\;,\;]_{\g^*}:\g^*\ot \g^*\mto \g^*$ on
$\g^*$. Then the quadruple $(\g,\g^*,\ad_\g^*,\ad^*_{\g^*})$ is a
matched pair of Lie algebras if and only if, for any $x,y\in \g$, we have
 \begin{equation}
 \mlabel{eq:3.14}
\delta[x,y]_\g=(\ad_\g(x)\otimes\id+\id\otimes
\ad_\g(x))\delta(y)-(\ad_\g(y)\otimes\id+\id\otimes
\ad_\g(y))\delta(x).
 \end{equation}
\end{thm}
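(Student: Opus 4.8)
The plan is to reduce the matched‑pair condition to a single dualization computation, exploiting that the two actions in the quadruple are coadjoint representations. First I would observe that in $(\g,\g^*,\ad_\g^*,\ad^*_{\g^*})$ the map $\rho=\ad_\g^*$ is the coadjoint representation of $\g$ on $\g^*$ and $\mu=\ad^*_{\g^*}$ is the coadjoint representation of $\g^*$ on $\g=(\g^*)^*$. Hence conditions (a) and (b) of Definition~\ref{de:match} hold automatically for \emph{any} Lie algebra structures on $\g$ and $\g^*$, so being a matched pair is equivalent to the two compatibility conditions in item~(\ref{it:mat3}). The entire content of the theorem is therefore that each of these two conditions is equivalent to the cocycle identity \eqref{eq:3.14}.

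Next I would fix the dualization dictionary. Writing $\langle\,,\,\rangle$ for the pairing of $\g^*$ with $\g$, extended factorwise to $\g^*\ot\g^*$ against $\g\ot\g$, I record $\langle\ad_\g^*(x)a,z\rangle=-\langle a,[x,z]_\g\rangle$ and $\langle\ad^*_{\g^*}(a)x,b\rangle=-\langle[a,b]_{\g^*},x\rangle$ from \eqref{eq:2.5}, the definition of $\delta$ as the transpose of $[\,,\,]_{\g^*}$ in the form $\langle\delta(x),a\ot b\rangle=\langle x,[a,b]_{\g^*}\rangle$, and the skew‑symmetry $\tau\delta=-\delta$ (where $\tau$ flips the tensor factors), which follows since $[\,,\,]_{\g^*}$ is antisymmetric. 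I abbreviate $L(x):=\ad(x)\ot\id+\id\ot\ad(x)$.

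Then I would pair the first compatibility condition \eqref{eq:Liemp1} (an identity in $\g^*$ with parameters $x\in\g$, $a,b\in\g^*$) against an arbitrary $y\in\g$ and rewrite the five resulting scalars. Using the dictionary, the term $\rho(x)[a,b]_{\g^*}$ gives $-\langle a\ot b,\delta[x,y]_\g\rangle$; the two terms $[\rho(x)a,b]_{\g^*}$ and $[a,\rho(x)b]_{\g^*}$ give $\langle a\ot b,(\ad(x)\ot\id)\delta(y)\rangle$ and $\langle a\ot b,(\id\ot\ad(x))\delta(y)\rangle$; and the two cross terms $\rho(\mu(a)x)b$ and $\rho(\mu(b)x)a$ give $-\langle a\ot b,(\id\ot\ad(y))\delta(x)\rangle$ and $-\langle a\ot b,(\ad(y)\ot\id)\delta(x)\rangle$. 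Summing shows that the pairing of \eqref{eq:Liemp1} with $y$ equals $\langle a\ot b,\,-\delta[x,y]_\g+L(x)\delta(y)-L(y)\delta(x)\rangle$. Since the pairing is nondegenerate and $a,b$ range over all of $\g^*$, the vanishing of \eqref{eq:Liemp1} for all $x,a,b$ is equivalent to \eqref{eq:3.14}.

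Finally I would treat the second compatibility condition (an identity in $\g$ with parameters $a\in\g^*$, $x,y\in\g$) by pairing against an arbitrary $b\in\g^*$; the analogous bookkeeping produces the same scalar $\langle a\ot b,\,-\delta[x,y]_\g+L(x)\delta(y)-L(y)\delta(x)\rangle$, so this condition too is equivalent to \eqref{eq:3.14}. Combining the three paragraphs, the quadruple is a matched pair of Lie algebras if and only if \eqref{eq:3.14} holds. I expect the main obstacle to be the accurate handling of the two cross terms: there the coadjoint action of $\g^*$ must be pushed through $\delta$, which forces one to transfer a coadjoint action of $\g$ from one tensor leg to the other, and it is precisely the skew‑symmetry $\tau\delta=-\delta$ that makes this close up into the symmetric operator $L(y)$. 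All remaining terms should be routine applications of the dualization dictionary.
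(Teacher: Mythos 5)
Your proposal is correct, but there is nothing in the paper to compare it against: the paper states Theorem~\ref{thm:md} as a recalled classical result and refers to \cite{CP} for its proof, so your argument fills in a proof the paper omits. What you give is the standard dualization argument, and it checks out. The reduction in your first paragraph is right: since $\ad_\g^*$ and $\ad_{\g^*}^*$ are coadjoint representations, conditions (a) and (b) of Definition~\ref{de:match} are automatic, so only the two compatibility identities matter. Your dictionary is the correct one (it matches the sign convention $\rho^*(x)=-\rho(x)^*$ of Eq.~\eqref{eq:2.5}), and I verified the term-by-term pairing: the first compatibility identity \eqref{eq:Liemp1}, paired against $y$, does collapse to $\langle a\ot b,\,-\delta[x,y]_\g+L(x)\delta(y)-L(y)\delta(x)\rangle$, with the flip $\tau\delta=-\delta$ needed exactly where you say it is (in the cross term $\rho(\mu(b)x)a$); the second compatibility identity paired against $b$ gives the same expression, so by nondegeneracy (available under the paper's standing finite-dimensionality assumption) each condition is separately equivalent to \eqref{eq:3.14} --- a slightly stronger statement than the theorem needs, and a clean way to organize the proof. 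One small bookkeeping caution: in your term-by-term list you name the terms without the minus signs they carry inside \eqref{eq:Liemp1}, while the contributions you assign to them already include those signs (e.g.\ the raw pairing of $[\rho(x)a,b]_{\g^*}$ with $y$ is $-\langle a\ot b,(\ad(x)\ot\id)\delta(y)\rangle$, and the $+$ you record is for the term $-[\rho(x)a,b]_{\g^*}$ as it appears). Under that consistent reading every sign is right and the total is as you state; it would be worth making the convention explicit when writing this up.
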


Under our assumption of finite dimension, the Lie algebra structure $(\g^*,[\;,\;]_{\g^*})$ is equivalent to the condition that $(\g,\delta)$ is a {\bf Lie coalgebra}~\mcite{Mi} which is defined without the dimensional restriction.

\begin{defi}
\mlabel{de:lieco}
A linear space $\g$ with a linear map $\delta:\g\mto \g\ot \g$ is called a {\bf Lie coalgebra} if $\delta$ is {\bf coantisymmetric}, in the sense that $\delta=-\tau \delta$ for the flip map $\tau:\g\ot \g\mto \g\ot \g$, and satisfies the {\bf co-Jacobian identity}:
\begin{equation}
(\id +\sigma+\sigma^2)(\id \ot \delta)\delta =0,
\end{equation}
where $\sigma(x\ot y\ot z):=z\ot x\ot y$ for $x, y, z\in \g$.
\end{defi}

Combining a Lie algebra and a Lie coalgebra gives

\begin{defi}
\mlabel{de:bial}
A {\bf Lie bialgebra} is a triple $(\g,[\;,\;]_\g,\delta)$, where $\g:=(\g,[\;,\;])$ is a Lie algebra, $(\g,\delta)$ is a Lie coalgebra (that is, $(\g^*,\delta^*)$ is a Lie algebra when $\g$ is finite dimensional) and Eq.~(\mref{eq:3.14}) holds.
\end{defi}

The notion of a Lie bialgebra applies to Lie algebras of any dimension.
Under the finite-dimension condition, the notion is characterized by matched pairs of Lie algebras because of Theorem~\mref{thm:md}, and then by Manin triples of Lie algebras thanks to Theorem~\mref{thm:frob}. With the extra structure of endomorphisms, we also have the following equivalent characterizations.

\begin{thm}
\mlabel{thm:rbbial}
\mlabel{thm:rbinfbialg}
Let $(\g, [\;,\;]_\g,\phi)$ be an endo Lie algebra. Suppose that there is an endo Lie algebra $(\g^*,[\;,\;]_{\g^*},\psi^*)$ on the linear dual $\g^*$ of $\g$. Let $\delta:\g\mto \g\ot \g$ denote the linear dual
of the multiplication $[\;,\;]_{\g^*}:\g^*\ot \g^*\mto \g^*$ on $\g^*$. Then the following statements are equivalent.
 \begin{enumerate}
 \item The quadruple $((\g,\phi),(\g^*,\psi^*), \ad_\g^*,\ad^*_{\g^*})$ is a matched pair of endo Lie algebras;
\mlabel{it:rbb1}
\item \mlabel{it:rbb2}
There is a Manin triple of endo Lie algebras associated
to the endo Lie algebras $(\g, [\;,\;]_\g,\phi)$ and $(\g^*,[\;,\;]_{\g^*},\psi^*)$;
\item \mlabel{it:rbb3}
The triple $(\g,[\;,\;]_\g,\delta)$ is a
Lie bialgebra. Furthermore, the linear operators $\phi^*$ and
$\psi$ dually represent  $(\g^*,\psi^*)$ and $(\g,\phi)$
respectively.
\end{enumerate}
\end{thm}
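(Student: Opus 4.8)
The plan is to prove the two stated equivalences by leaning entirely on the machinery already assembled: first invoke the Manin-triple characterization of matched pairs of endo Lie algebras, and then match the three defining conditions of a matched pair of endo Lie algebras against the three assertions of statement~(\mref{it:rbb3}) one at a time. No genuinely new computation should be required, since the substantive content is carried by Theorems~\mref{thm:md} and \mref{thm:3.7}.

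For (\mref{it:rbb1}) $\Leftrightarrow$ (\mref{it:rbb2}) I would simply cite Theorem~\mref{thm:3.7}, which is precisely the equivalence between the existence of a Manin triple of endo Lie algebras associated to $(\g,\phi)$ and $(\g^*,\psi^*)$ and the statement that $((\g,\phi),(\g^*,\psi^*),\ad_\g^*,\ad^*_{\g^*})$ is a matched pair of endo Lie algebras. Nothing further is needed here.

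For (\mref{it:rbb1}) $\Leftrightarrow$ (\mref{it:rbb3}) I would unravel the definition of a matched pair of endo Lie algebras into its three constituent conditions and identify each with a part of~(\mref{it:rbb3}). First, the condition~(\mref{it:emat3}) that $(\g,\g^*,\ad_\g^*,\ad^*_{\g^*})$ be a matched pair of the underlying Lie algebras is, by Theorem~\mref{thm:md}, exactly the cocycle identity~\meqref{eq:3.14}, i.e., the assertion that $(\g,[\;,\;]_\g,\delta)$ is a Lie bialgebra, using that in finite dimensions the Lie algebra structure $[\;,\;]_{\g^*}$ on $\g^*$ is the same data as the Lie coalgebra structure $\delta$. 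Second, condition~(\mref{it:emat2}), that $(\g^*,\ad_\g^*,\psi^*)$ be a representation of the endo Lie algebra $(\g,\phi)$, is verbatim from Definition~\mref{de:admop} the statement that $\psi$ \dreping $(\g,\phi)$. Third, condition~(\mref{it:emat1}), that $(\g,\ad^*_{\g^*},\phi)$ be a representation of the endo Lie algebra $(\g^*,\psi^*)$, becomes, under the canonical identifications $(\g^*)^*\cong\g$, $(\phi^*)^*=\phi$ and $(\ad_{\g^*})^*=\ad^*_{\g^*}$, exactly the statement that $\phi^*$ \dreping $(\g^*,\psi^*)$. Assembling these three term-by-term equivalences gives (\mref{it:rbb1}) $\Leftrightarrow$ (\mref{it:rbb3}).

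The only point demanding care is this third identification, where one must track the double dual and the transpose of a transpose to confirm that ``$\phi^*$ \dreping $(\g^*,\psi^*)$'' really unwinds to the representation condition on $(\g,\ad^*_{\g^*},\phi)$, and not to some variant with the roles of $\phi$ and $\psi$ interchanged. Apart from this bookkeeping the argument is pure definition-chasing. I note that Lemma~\mref{lem:abas} furnishes an independent confirmation of the two dual-representation statements from the Manin-triple side; one could therefore alternatively route the proof through~(\mref{it:rbb2}) rather than~(\mref{it:rbb1}) if preferred, but the direct matching against~(\mref{it:rbb1}) is the most economical.
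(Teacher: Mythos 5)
Your proposal is correct and follows essentially the same route as the paper's proof: cite Theorem~\ref{thm:3.7} for the equivalence of (\ref{it:rbb1}) and (\ref{it:rbb2}), then observe that the definition of a matched pair of endo Lie algebras decomposes exactly into the underlying matched pair of Lie algebras (equivalent to the Lie bialgebra condition by Theorem~\ref{thm:md}) together with the two dual-representation conditions of (\ref{it:rbb3}). The paper treats the double-dual identification you flag as immediate by definition, but the substance of the argument is identical.
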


\begin{proof}
The equivalence (\mref{it:rbb1}) $\Longleftrightarrow$ (\mref{it:rbb2}) is given in Theorem~\mref{thm:3.7}.

By definition, Item~(\mref{it:rbb1}) means that
$(\g,\g^*,\ad_\g^*,\ad_{\g^*}^*)$ is a matched pair of Lie
algebras and that the dual representation conditions in
Item~(\mref{it:rbb3}) hold. Since the matched pair condition is
equivalent to the triple $(\g,[\;,\;]_\g,\delta)$ being a Lie
bialgebra by Theorem~\mref{thm:md}, we obtain the equivalence
(\mref{it:rbb1}) $\Longleftrightarrow$ (\mref{it:rbb3}).
\end{proof}

Given the aforementioned well-known equivalent characterizations
of a Lie bialgebra by a matched pair and a Manin triple of Lie
algebras, the characterizations in Theorem~\mref{thm:rbinfbialg}
should lead to a notion of bialgebra structure for endo Lie algebras given by
Theorem~\mref{thm:rbinfbialg}. \meqref{it:rbb3}. We first analyze
the related conditions.

\begin{defi} \mlabel{lem:rbcorb}
An {\bf endo Lie coalgebra} is a Lie coalgebra $(\g,\delta)$ together with a Lie coalgebra endomorphism $\psi:\g \mto \g$, that is, a $\psi\in \End(\g)$ such that
 \begin{eqnarray}
\mlabel{eq:corb}
 (\psi\ot \psi)\delta=\delta \psi.
 \end{eqnarray}
 \end{defi}
\vspace{-.1cm}
Under the finite-dimension condition, Eq.~\meqref{eq:corb}
is equivalent to the condition that
$\psi^*:\g^*\mto \g^*$ is an
endomorphism of the Lie algebra $\g^*$.

Note that the dual representation conditions in
Theorem~\mref{thm:rbinfbialg}. (\mref{it:rbb3}) are either defined
or can be rephrased as follows without referring to the dual space
$\g^*$ and its operations:
\begin{eqnarray}
 (\id\otimes \phi)\delta&=&(\psi\otimes\id)\delta \phi,
 \mlabel{eq:pduqrd}\\
\psi[\phi(x),y]_\g&=&[x,\psi(y)]_\g, \quad \forall x, y\in \g.
 \mlabel{eq:pduql}
 \end{eqnarray}

We are thus led to the key notion of endo Lie bialgebras that applies to vector spaces of any dimensions and indeed to any modules, just like its classical counterpart of Lie bialgebras.
\vspace{-.2cm}
\begin{defi}
\mlabel{de:rbbial}
An {\bf endo Lie bialgebra} is a quintuple
$(\g,[\;,\;]_\g,\delta,\phi,\psi)$ or simply a triple
$((\g,\phi),\delta,\psi)$ in which
\begin{enumerate}
\item $(\g,[\;,\;]_\g,\delta)$ is a Lie bialgebra,
\item $(\g,[\;,\;]_\g, \phi)$ is an endo Lie algebra,
\item $(\g,\delta,\psi)$ is an endo Lie coalgebra,
\item the compatibility conditions in Eqs.~(\mref{eq:pduqrd}) -- (\mref{eq:pduql}) are
satisfied.
\end{enumerate}
\end{defi}

Returning to the finite dimensional case, we immediately have

\begin{cor}
Consider a quintuple $(\g,[\;,\;]_\g,\delta,\phi,\psi)$ where
$(\g,\phi)$ is an endo Lie algebra. 
Then the quintuple is an endo Lie bialgebra if and only if anyone $($and hence all$)$ of the equivalent conditions in Theorem~\mref{thm:rbbial} is satisfied. \mlabel{co:rbb}
\end{cor}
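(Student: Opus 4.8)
The plan is to prove the corollary by unwinding the four defining conditions (a)--(d) of an endo Lie bialgebra in Definition~\mref{de:rbbial} and matching them one by one against the standing hypotheses and Item~(\mref{it:rbb3}) of Theorem~\mref{thm:rbbial}. Since the three items of that theorem are already known to be equivalent, it suffices to establish the equivalence with Item~(\mref{it:rbb3}). Throughout I work in the finite-dimensional setting, so that $\delta$ dualizes to the bracket $[\;,\;]_{\g^*}=\delta^*$ on $\g^*$ and $\psi$ dualizes to $\psi^*\in\End(\g^*)$; these are precisely the data entering Theorem~\mref{thm:rbbial}.

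First I would dispose of the structural conditions of Definition~\mref{de:rbbial}. Condition~(a), that $(\g,[\;,\;]_\g,\delta)$ is a Lie bialgebra, is verbatim the first assertion of Item~(\mref{it:rbb3}); moreover it forces $(\g,\delta)$ to be a Lie coalgebra, hence $(\g^*,[\;,\;]_{\g^*})$ to be a Lie algebra. Condition~(c), that $(\g,\delta,\psi)$ is an endo Lie coalgebra, is by definition Eq.~\meqref{eq:corb}, which in finite dimension is exactly the assertion that $\psi^*$ is an endomorphism of the Lie algebra $\g^*$ (the remark following Definition~\mref{lem:rbcorb}). Taken together, conditions~(a) and (c) amount to the standing hypothesis of Theorem~\mref{thm:rbbial} that $(\g^*,[\;,\;]_{\g^*},\psi^*)$ is an endo Lie algebra, while condition~(b) coincides with the corollary's own hypothesis that $(\g,\phi)$ is an endo Lie algebra.

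It then remains to match condition~(d), namely Eqs.~\meqref{eq:pduqrd}--\meqref{eq:pduql}, with the two dual representation requirements of Item~(\mref{it:rbb3}). By the characterization through Eq.~\meqref{eq:Lieadmiss} (the corollary to Lemma~\mref{lem:admrep}), Eq.~\meqref{eq:pduql} says precisely that $\psi$ \dreping $(\g,\phi)$. Applying that same characterization to the endo Lie algebra $(\g^*,\psi^*)$ and the operator $\phi^*$ shows that $\phi^*$ \dreping $(\g^*,\psi^*)$ if and only if $\phi^*[\psi^*(a^*),b^*]_{\g^*}=[a^*,\phi^*(b^*)]_{\g^*}$ for all $a^*,b^*\in\g^*$; pairing both sides with an arbitrary $x\in\g$ and using $[\;,\;]_{\g^*}=\delta^*$ rewrites this identity as $(\psi\ot\id)\delta\phi=(\id\ot\phi)\delta$, which is Eq.~\meqref{eq:pduqrd}. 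This is exactly the rephrasing already recorded in the discussion preceding Definition~\mref{de:rbbial}, and it is the one place where an actual computation, rather than mere bookkeeping, is needed.

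Assembling these identifications yields the equivalence in both directions: from an endo Lie bialgebra, conditions~(a) and (c) supply the standing hypothesis of Theorem~\mref{thm:rbbial} and conditions~(a) and (d) supply Item~(\mref{it:rbb3}); conversely, the standing hypothesis together with Item~(\mref{it:rbb3}) yields conditions~(a)--(d). The passage from any one to all of the items is then immediate from the equivalences already established in Theorem~\mref{thm:rbbial}. I expect the sole genuine obstacle to be the short dualization above that identifies Eq.~\meqref{eq:pduqrd} with the dual representation of $\phi^*$; every remaining step is a direct translation of definitions under finite-dimensional duality.
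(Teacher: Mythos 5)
Your proposal is correct and follows essentially the same route as the paper: the paper states the corollary as immediate precisely because Definition~\mref{de:rbbial} was constructed as the dual-free rephrasing of Theorem~\mref{thm:rbbial}~(\mref{it:rbb3}), with the identification of Eq.~\meqref{eq:pduqrd} with the dual-representation condition for $\phi^*$ (your one genuine computation) already recorded in the discussion preceding that definition. Your write-up merely makes explicit the condition-by-condition matching that the paper leaves implicit.
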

\vspace{-.3cm}
\subsection{Homomorphisms of Lie bialgebras and Manin triples}
As both the main motivation and application of our study
of endo Lie bialgebras, we introduce a new notion of homomorphisms of
Lie bialgebras that is compatible with those of Manin triples and  matched pairs. Then
we compare this notion with the existing notion of Lie bialgebra homomorphisms.
\vspace{-.2cm}
\subsubsection{New homomorphisms for Lie bialgebras}
We can rewrite Definition~\mref{de:rbbial} in terms of morphisms of Lie bialgebras.

\begin{defi}
\mlabel{de:auto-weakhomLie}
A {\bf \weak endomorphism} on a Lie bialgebra
$(\g,[\;,\;]_\g,\delta)$ consists of a Lie algebra homomorphism
$\phi:\g\mto\g$ and a Lie coalgebra homomorphism
$\psi:\g\rightarrow \g$
satisfying Eqs.~(\mref{eq:pduqrd}) -- (\mref{eq:pduql}).
\end{defi}
Then we immediately have
\begin{pro}
The quintuple $(\g,[\;,\;]_\g,\delta,\phi,\psi)$ is an endo Lie bialgebra if and
only if $(\phi,\psi)$ is a \weak endomorphism of the Lie bialgebra
$(\g,[\;,\;]_\g,\delta)$.
\end{pro}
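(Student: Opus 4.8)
The plan is to prove this proposition as a direct term-by-term comparison of Definition~\mref{de:rbbial} (endo Lie bialgebra) and Definition~\mref{de:auto-weakhomLie} (\weak endomorphism), since both are built on the very same ambient Lie bialgebra $(\g,[\;,\;]_\g,\delta)$. Because the statement is an ``if and only if'' whose two sides are assembled from identical ingredients, I would establish it by checking that the four defining clauses of an endo Lie bialgebra correspond bijectively to the requirements imposed on a \weak endomorphism, with no clause left over on either side. There is no computation to perform beyond unwinding the definitions involved.

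First I would note that clause (1) of Definition~\mref{de:rbbial}, namely that $(\g,[\;,\;]_\g,\delta)$ is a Lie bialgebra, is precisely the fixed object of which Definition~\mref{de:auto-weakhomLie} takes an endomorphism, so this condition is common to both sides and carries no content in the comparison. Next, clause (2) of Definition~\mref{de:rbbial} asserts that $(\g,[\;,\;]_\g,\phi)$ is an endo Lie algebra; since the underlying Lie algebra is already fixed, the definition of an endo Lie algebra reduces this to the single requirement that $\phi$ be a Lie algebra endomorphism, which is exactly the demand in Definition~\mref{de:auto-weakhomLie} that $\phi:\g\mto\g$ be a Lie algebra homomorphism. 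Similarly, clause (3) asserts that $(\g,\delta,\psi)$ is an endo Lie coalgebra; invoking Definition~\mref{lem:rbcorb}, with $\delta$ fixed, this amounts to the single condition $(\psi\ot\psi)\delta=\delta\psi$ of Eq.~\meqref{eq:corb}, which is by definition the statement that $\psi$ is a Lie coalgebra endomorphism, matching the requirement that $\psi:\g\mto\g$ be a Lie coalgebra homomorphism. Finally, clause (4) demands the compatibility relations Eqs.~\meqref{eq:pduqrd}--\meqref{eq:pduql}, and these same two equations appear verbatim as the closing requirement in Definition~\mref{de:auto-weakhomLie}.

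Having lined up clauses (2), (3), (4) with the three requirements on $(\phi,\psi)$ and absorbed clause (1) into the ambient Lie bialgebra, I would conclude that the quintuple satisfies all of Definition~\mref{de:rbbial} if and only if the pair $(\phi,\psi)$ satisfies all of Definition~\mref{de:auto-weakhomLie}, which is the assertion. The only point deserving explicit mention, and the nearest thing to an obstacle, is the observation that ``an endo Lie algebra (resp.\ endo Lie coalgebra) structure extending a fixed Lie algebra (resp.\ Lie coalgebra)'' carries exactly the data and conditions of ``an endomorphism'' and nothing more; once this is recorded, the equivalence is immediate and no further argument is needed.
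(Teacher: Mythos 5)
Your proposal is correct and matches the paper's treatment: the paper offers no proof at all, introducing the proposition with ``Then we immediately have,'' precisely because the equivalence is the definitional unwinding you carry out. Your term-by-term matching of the four clauses of Definition~\mref{de:rbbial} against Definition~\mref{de:auto-weakhomLie} (with clause (1) absorbed as the ambient fixed Lie bialgebra) is exactly the intended argument, just written out explicitly.
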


Definition~\mref{de:auto-weakhomLie} motivates us to give the following notion of homomorphisms between any two Lie bialgebras.

\begin{defi}
\mlabel{de:liebialghom}
Let $(\g,[\;,\;]_\g,\delta_\g)$ and $(\h,[\;,\;]_\h,\delta_\h)$ be Lie bialgebras. A {\bf \weak homomorphism of Lie bialgebras} from $(\g,[\;,\;]_\g,\delta_\g)$ to $(\h,[\;,\;]_\h,\delta_\h)$ is a pair $(\phi,\psi)$ of linear maps such that
\begin{enumerate}
\item $\phi:\g\mto \h$ is a homomorphism of Lie algebras,
\item
$\psi:\h\mto \g$ a homomorphism of Lie coalgebras,
\item the polarizations of Eq.~\meqref{eq:pduqrd} --
\meqref{eq:pduql} hold:
\begin{eqnarray}
 (\id_\g\otimes \phi)\delta_\g&=&(\psi\otimes\id_\h)\delta_\h \phi,
 \mlabel{eq:pp1}\\
\psi[\phi(x),y]_\h&=&[x,\psi(y)]_\g, \quad \forall x\in \g, y\in
\h.
 \mlabel{eq:pp2}
 \end{eqnarray}
\end{enumerate}
If both $\phi$ and $\psi$ are bijective, the pair is
called a {\bf \weak isomorphism of Lie bialgebras}. Let $\LB$
denote the category of Lie bialgebras with its morphisms thus
defined.
\end{defi}

The benefit of \weak homomorphisms of Lie bialgebras is that it is
compatible with the following naturally defined morphisms of Manin
triples derived from Manin triples of endo Lie algebras.
\vspace{-.2cm}
\begin{defi}
Let $(\g\bowtie\g^*,\g,\g^*)$ and $(\h\bowtie\h^*,\h,\h^*)$ be
Manin triples of Lie algebras. A {\bf \weak homomorphism} between
them is a Lie algebra homomorphism
$$f:\g\bowtie \g^*\rightarrow \h\bowtie\h^*$$
that restricts to Lie algebra homomorphisms
$$f|_\g:\g\mto \h,\quad  f|_{\g^*}:\g^*\mto \h^*.$$
If $f$ is bijective, it is called a {\bf \weak isomorphism of
Manin triples}.
Let $\MT$ denote the category of Manin triples with the morphisms thus defined.
\mlabel{de:whomMT}
\end{defi}

This notion is justified by the equivalence in the case of endomorphisms.

\begin{pro}
Let $(\g\bowtie\g^*,\g,\g^*)$ be a Manin triple of Lie algebras.
Then a linear map $f\in\End(\g \bowtie \g^*)$ is a \weak endomorphism of the Manin triple if and only
if $((\g\bowtie\g^*,f),(\g,f|_\g),(\g^*,f|_{\g^*}))$ is a Manin
triple of endo Lie algebras associated to $(\g,f|_\g)$ and $(\g^*,f|_{\g^*})$.
\mlabel{pp:endmt}
\end{pro}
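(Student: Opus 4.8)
The plan is to show that both sides of the equivalence repackage exactly the same data, so that the statement is essentially a matter of unwinding Definitions~\mref{de:whomMT} and~\mref{de:3.4}. Applying Definition~\mref{de:whomMT} with the two Manin triples equal, $f$ is a \weak endomorphism precisely when $f$ is a Lie algebra endomorphism of $\g\bowtie\g^*$ satisfying $f(\g)\subseteq\g$ and $f(\g^*)\subseteq\g^*$. By Definition~\mref{de:3.4}, the triple $((\g\bowtie\g^*,f),(\g,f|_\g),(\g^*,f|_{\g^*}))$ is a Manin triple of endo Lie algebras exactly when $(\g,f|_\g)$ and $(\g^*,f|_{\g^*})$ are endo Lie algebras and $(\g\bowtie\g^*,f,\frakB_d)$ is a quadratic endo Lie algebra, with $\frakB_d$ the form of~\meqref{eq:3.9}.

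First I would isolate the two structural observations that collapse the equivalence to bookkeeping. The first: since the underlying space of $\g\bowtie\g^*$ is $\g\oplus\g^*$, a linear map stabilizing both summands is the direct sum of its restrictions, $f=f|_\g+f|_{\g^*}$ in the notation of~\meqref{eq:Liehom}, and the restriction of a Lie algebra homomorphism to an invariant Lie subalgebra is again a Lie algebra homomorphism. The second: since $(\g\bowtie\g^*,\g,\g^*)$ is assumed to be a Manin triple of Lie algebras, $\frakB_d$ is already nondegenerate, symmetric and invariant, so $(\g\bowtie\g^*,\frakB_d)$ is a quadratic Lie algebra; and Definition~\mref{de:1.3} imposes no compatibility between the endomorphism and the form. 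Consequently $(\g\bowtie\g^*,f,\frakB_d)$ is a quadratic endo Lie algebra if and only if $f$ is simply a Lie algebra endomorphism of $\g\bowtie\g^*$.

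With these in hand both implications are immediate. For the forward direction, a \weak endomorphism $f$ stabilizes $\g$ and $\g^*$, so by the first observation $\phi:=f|_\g$ and $f|_{\g^*}$ are Lie algebra endomorphisms, $(\g,\phi)$ and $(\g^*,f|_{\g^*})$ are endo Lie algebras, and $f=\phi+f|_{\g^*}$; writing $f|_{\g^*}=\psi^*$ for the unique $\psi\in\End(\g)$ with this transpose (available in finite dimension) matches the notation of Definition~\mref{de:3.4}, and the second observation makes $(\g\bowtie\g^*,f,\frakB_d)$ a quadratic endo Lie algebra. Conversely, a Manin triple of endo Lie algebras supplies directly that $f|_\g$ and $f|_{\g^*}$ are Lie algebra endomorphisms (hence $f$ stabilizes $\g$ and $\g^*$) and, through the second observation, that $f$ is a Lie algebra endomorphism of $\g\bowtie\g^*$, which are exactly the conditions of Definition~\mref{de:whomMT}. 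I expect no genuine obstacle here: the only point that needs care is recognizing that the quadratic structure is inherited for free from the ambient Manin triple and places no constraint linking $f$ with $\frakB_d$, after which the whole statement reduces to the observation that a \weak endomorphism is precisely an endomorphism of $\g\bowtie\g^*$ that splits along the two Lagrangian subalgebras.
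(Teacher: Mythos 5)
Your proof is correct, and it follows essentially the same route as the paper: Proposition~\mref{pp:endmt} is stated there without proof, being regarded as an immediate unwinding of Definitions~\mref{de:whomMT} and~\mref{de:3.4}, which is exactly what you carry out. The two observations you isolate --- that $f$ stabilizing both summands forces $f=f|_\g+f|_{\g^*}$ (with $f|_{\g^*}=\psi^*$ for a unique $\psi\in\End(\g)$ in finite dimension), and that Definition~\mref{de:1.3} imposes no compatibility between the endomorphism and $\frakB_d$, so the quadratic structure is inherited for free from the ambient Manin triple --- are precisely the points the paper leaves implicit.
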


Due to the correspondence between endo Lie bialgebras and Manin
triples of endo Lie algebras given in Theorem~\mref{thm:rbinfbialg}, we obtain

\begin{pro}
Let $(\g,[\;,\;]_\g,\delta)$ be a Lie bialgebra and
$(\g\bowtie\g^*,\g,\g^*)$ be the corresponding Manin triple. Then
$(\phi,\psi)$ is a \weak endomorphism of the Lie bialgebra
$(\g,[\;,\;]_\g,\delta)$ if and only if $\phi+\psi^*$ is a \weak
endomorphism of the Manin triple $(\g\bowtie\g^*,\g,\g^*)$.
\end{pro}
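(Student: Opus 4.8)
The plan is to obtain this proposition as a formal corollary of the characterizations already in place, by concatenating two chains of "if and only if" statements through the common intermediate notion of a Manin triple of endo Lie algebras. I would first fix the pair $(\phi,\psi)$ and name the two endpoints to be compared: on the left, the assertion that $(\phi,\psi)$ is a \weak endomorphism of the Lie bialgebra $(\g,[\;,\;]_\g,\delta)$; on the right, the assertion that the block-diagonal map $\phi+\psi^*$ on $\g\oplus\g^*$ (in the sense of Eq.~\meqref{eq:Liehom}, so that it sends $x+a^*$ to $\phi(x)+\psi^*(a^*)$) is a \weak endomorphism of the Manin triple $(\g\bowtie\g^*,\g,\g^*)$.

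For the left endpoint, I would invoke the proposition immediately following Definition~\mref{de:auto-weakhomLie}: $(\phi,\psi)$ is a \weak endomorphism of $(\g,[\;,\;]_\g,\delta)$ if and only if the quintuple $(\g,[\;,\;]_\g,\delta,\phi,\psi)$ is an endo Lie bialgebra. By Corollary~\mref{co:rbb}, the latter is equivalent to the conditions collected in Theorem~\mref{thm:rbbial}; I would single out condition~(\mref{it:rbb2}), namely the existence of a Manin triple of endo Lie algebras associated to the endo Lie algebras $(\g,\phi)$ and $(\g^*,\psi^*)$. By Definition~\mref{de:3.4}, such a Manin triple is necessarily $((\g\bowtie\g^*,\phi+\psi^*),(\g,\phi),(\g^*,\psi^*))$.

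For the right endpoint, I would set $f:=\phi+\psi^*$ and note that the defining formula of the block-diagonal map gives $f|_\g=\phi$ and $f|_{\g^*}=\psi^*$. Proposition~\mref{pp:endmt} then states exactly that $f$ is a \weak endomorphism of the Manin triple $(\g\bowtie\g^*,\g,\g^*)$ if and only if $((\g\bowtie\g^*,f),(\g,f|_\g),(\g^*,f|_{\g^*}))$ is a Manin triple of endo Lie algebras associated to $(\g,f|_\g)$ and $(\g^*,f|_{\g^*})$, which upon substituting the restrictions is precisely the condition $((\g\bowtie\g^*,\phi+\psi^*),(\g,\phi),(\g^*,\psi^*))$ reached in the previous paragraph. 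Chaining these equivalences then yields the stated biconditional.

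The one place requiring genuine care—and the step I would treat as the main (though modest) obstacle—is the bookkeeping of the auxiliary hypotheses implicit on each side, so that no condition is silently gained or lost when passing through the intermediate notion. Specifically, being a \weak endomorphism of the Lie bialgebra presupposes that $\psi$ is a Lie coalgebra endomorphism, which in finite dimension (by the remark after Definition~\mref{lem:rbcorb}) is equivalent to $\psi^*$ being a Lie algebra endomorphism of $\g^*$; this is exactly what makes $(\g^*,\psi^*)$ an endo Lie algebra and hence renders the phrase "Manin triple of endo Lie algebras associated to $(\g,\phi)$ and $(\g^*,\psi^*)$" meaningful and identical on both sides. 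Once I confirm that the endo Lie algebra data $(\g,\phi)$ and $(\g^*,\psi^*)$ match across the two applications of the intermediate characterization, the proposition follows with no further computation.
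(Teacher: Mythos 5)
Your proof is correct and is essentially the paper's own argument: the paper states this proposition without a separate proof, as an immediate consequence of the correspondence in Theorem~\mref{thm:rbinfbialg}, and your chain through the endo-Lie-bialgebra characterization (Corollary~\mref{co:rbb}), Definition~\mref{de:3.4}, and Proposition~\mref{pp:endmt} simply makes that one-line justification explicit. Your bookkeeping point --- that $\psi$ being a Lie coalgebra endomorphism matches $\psi^*$ being a Lie algebra endomorphism of $\g^*$ under the finite-dimensionality convention, so the intermediate notion of a Manin triple of endo Lie algebras associated to $(\g,\phi)$ and $(\g^*,\psi^*)$ is the same on both sides --- is exactly the right consistency check.
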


Now we show that the correspondence of Lie bialgebras with Manin
triples established by Theorems~\mref{thm:frob} and ~\mref{thm:md} gives rise to an
equivalence of the corresponding categories $\LB$ and $\MT$.

\begin{pro}
\mlabel{thm:catequiv}
Assume that all the spaces are finite dimensional.
\begin{enumerate}
\item  Let $(\g,[\;,\;]_\g,\delta_\g)$ and
$(\h,[\;,\;]_\h,\delta_\h)$ be Lie bialgebras. Let
$(\g\bowtie\g^*,\g,\g^*)$ and $(\h\bowtie\h^*,\h,\h^*)$ be the
corresponding Manin triples of Lie algebras. There is a bijection
between the set $\Hom_{\bf LB}(\g,\h)$ of \weak homomorphisms between the Lie bialgebras and the set $\Hom_{\bf MT}(\g\bowtie\g^*,\h\bowtie \h^*)$ of \weak homomorphisms between the Manin triples. The
bijection is given by sending $(\phi,\psi)$ to $f:=\phi+ \psi^*$
and sending $f$ to $(f|_\g, (f|_{\g^*})^*)$.
\mlabel{it:catequiv1}
\item
\mlabel{it:catequiv2}
The correspondence in \eqref{it:catequiv1}
gives an equivalence from the category $\LB$ of Lie bialgebras to
the category $\MT$ of Manin triples.
\end{enumerate}
\end{pro}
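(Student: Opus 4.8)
The plan is to prove part~\eqref{it:catequiv1} first, as it carries all the substance, and then to deduce part~\eqref{it:catequiv2} by checking functoriality and essential surjectivity. Finite dimensionality is used freely, so that the identifications $\g^{**}=\g$, $\h^{**}=\h$ and the duality between Lie algebra structures on $\g^*$ and Lie coalgebra structures on $\g$ are available. First I would dispose of the set-theoretic bijection. Given a \weak homomorphism $(\phi,\psi)$ with $\phi:\g\mto\h$ and $\psi:\h\mto\g$, the map $f:=\phi+\psi^*$ sends $\g$ into $\h$ by $\phi$ and $\g^*$ into $\h^*$ by $\psi^*$, so that $f|_\g=\phi$ and $f|_{\g^*}=\psi^*$; conversely $f\mapsto(f|_\g,(f|_{\g^*})^*)$, and the two assignments are visibly mutually inverse since $(\psi^*)^*=\psi$. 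Thus the only real point is that $(\phi,\psi)$ satisfies the conditions of Definition~\mref{de:liebialghom} exactly when $f=\phi+\psi^*$ satisfies those of Definition~\mref{de:whomMT}.

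Next I would reduce the single homomorphism identity on $f$ to conditions on $(\phi,\psi)$ by decomposing along the summands. Since $\g,\g^*\subseteq\g\bowtie\g^*$ and $\h,\h^*\subseteq\h\bowtie\h^*$ are Lie subalgebras and $f$ respects the decomposition, bilinearity of the bracket~\meqref{eq:3.7} reduces $f[u,v]_\bowtie=[f(u),f(v)]_\bowtie$ to three families obtained by letting $u,v$ range over $\g$, over $\g^*$, and over one factor each. The $\g$-$\g$ family says exactly that $\phi$ is a homomorphism of Lie algebras, and the $\g^*$-$\g^*$ family says that $\psi^*:\g^*\mto\h^*$ is a homomorphism of Lie algebras, which under finite dimensionality is equivalent to $\psi$ being a homomorphism of Lie coalgebras. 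These reproduce items (a) and (b) of Definition~\mref{de:liebialghom}.

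The crux is the mixed family. Taking $u=x\in\g$ and $v=b^*\in\g^*$ in~\meqref{eq:3.7} with $\rho=\ad_\g^*$ and $\mu=\ad_{\g^*}^*$, the bracket $[x,b^*]_\bowtie=\ad_\g^*(x)b^*-\ad_{\g^*}^*(b^*)x$ has a $\g^*$-component and a $\g$-component; applying $f$ componentwise and equating with $[\phi(x),\psi^*(b^*)]_\bowtie$ produces one identity in $\h^*$ and one in $\h$. Pairing the $\h^*$-identity against an arbitrary element of $\h$ and using only the brackets of $\g$ and $\h$ recovers precisely Eq.~\meqref{eq:pp2}, while pairing the $\h$-identity against an arbitrary element of $\h^*$, using that $\delta_\g$ and $\delta_\h$ are the linear duals of $[\;,\;]_{\g^*}$ and $[\;,\;]_{\h^*}$ together with the transpose formula~\meqref{eq:trans}, recovers precisely Eq.~\meqref{eq:pp1}. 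This computation is the polarization of the one already carried out for endomorphisms in Proposition~\mref{pp:endmt} and the proposition immediately preceding the present one, with the second copy of $\g$ replaced by $\h$; I expect the only delicate point to be the bookkeeping of the coadjoint signs and of which tensor factor each transpose acts on, with everything else routine. Combining the three families yields the equivalence of conditions, finishing part~\eqref{it:catequiv1}.

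For part~\eqref{it:catequiv2} I would define the functor $\LB\mto\MT$ on objects by $\g\mapsto(\g\bowtie\g^*,\g,\g^*)$, which is well defined by Theorems~\mref{thm:frob} and~\mref{thm:md}, and on morphisms by $(\phi,\psi)\mapsto\phi+\psi^*$. Part~\eqref{it:catequiv1} shows this assignment is a bijection on every hom-set, so the functor is fully faithful once functoriality is verified: the identity $(\id_\g,\id_\g)$ maps to $\id_\g+\id_{\g^*}=\id_{\g\bowtie\g^*}$, and for composable \weak homomorphisms $(\phi_1,\psi_1)$ and $(\phi_2,\psi_2)$ one computes $(\phi_2+\psi_2^*)(\phi_1+\psi_1^*)=\phi_2\phi_1+(\psi_1\psi_2)^*$, which is the image of the composite $(\phi_2\phi_1,\psi_1\psi_2)$ (note that $\psi$ composes contravariantly). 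Finally, essential surjectivity is immediate from Theorem~\mref{thm:frob}: every object of $\MT$ is a standard Manin triple $\g\bowtie\g^*$ arising from a Lie bialgebra structure on $\g$, hence lies in the image. A fully faithful and essentially surjective functor is an equivalence, which is the assertion of part~\eqref{it:catequiv2}.
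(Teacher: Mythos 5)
Your proposal is correct and follows essentially the same route as the paper: expand the bracket \meqref{eq:3.7} on $\g\bowtie\g^*$, match components, identify the $\g$-$\g$ and $\g^*$-$\g^*$ parts with the Lie algebra/coalgebra homomorphism conditions, and recognize the mixed terms (after dualizing against $\h$ and $\h^*$) as Eqs.~\meqref{eq:pp1}--\meqref{eq:pp2}. The only difference is one of detail: you make explicit the pairing argument that the paper leaves implicit when citing \meqref{eq:pp1}--\meqref{eq:pp2}, and you spell out the functoriality and essential surjectivity for part (b), which the paper dismisses as following "directly" from part (a).
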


\begin{proof}
\meqref{it:catequiv1}. Assume that $(\phi,\psi)$ is a \weak
homomorphism of the Lie bialgebras. Let $x,y\in \g, a^*,b^*\in \g^*$.
Then for $f:=\phi+\psi^*:\g\bowtie \g^*\rightarrow \h\bowtie\h^*$ we have
{\small\begin{eqnarray*}
f([x+a^*,y+b^*]_{\bowtie} )&=& \phi([x,y]_\g)+
\phi(\ad_{\g^*}(a^*)y-\ad^*_{\g^*}(b^*)x)\\
&\mbox{}&
+\psi^*(\ad_\g^*(x)b^*-\ad_\g^*(y)a^*)+\psi^*([a^*,b^*]_{\g^*}),\\
~[f(x+a^*),f(y+b^*)]_{\bowtie}&=&[\phi(x),\phi(y)]_\h+\ad_{\h^*}^*(\psi^*(a^*))\phi(y)-\ad_{\h^*}^*(\psi^*(b^*))\phi(x)\\
&\mbox{}&+\ad_\h^*(\phi(x))\psi^*(b^*)-\ad_\h^*(\phi(y))\psi^*(a^*)+[\psi^*(a^*),\psi^*(b^*)]_{\h^*}.
\end{eqnarray*}}
Since $\phi:\g\rightarrow \h$ and $\psi^*:\g^*\mto \h^*$ are homomorphisms of Lie algebras, we have
$$\phi([x,y]_\g)=[\phi(x),\phi(y)]_\h, \quad \psi^*([a^*,b^*]_{\g^*})=[\psi^*(a^*),\psi^*(b^*)]_{\h^*}.$$
By Eq.~(\mref{eq:pp1}), we have
 $$\phi(\ad_{\g^*}(a^*)y)=\ad_{\h^*}^*(\psi^*(a^*))\phi(y), \quad  \phi(\ad^*_{\g^*}(b^*)x)=\ad_{\h^*}^*(\psi^*(b^*))\phi(x).$$
By Eq.~(\mref{eq:pp2}), we have
 $$\psi^*(\ad_\g^*(x)b^*)=\ad_\h^*(\phi(x))\psi^*(b^*),\quad
\psi^*(\ad_\g^*(y)a^*)=\ad_\h^*(\phi(y))\psi^*(a^*).$$
Therefore $f$ is a \weak homomorphism of Manin triples.

Conversely, by a similar argument, if $f$ is a \weak homomorphism of
Manin triples, then $(f|_\g, (f|_{\g^*})^*)$ is a \weak
homomorphism of the corresponding Lie bialgebras.

\meqref{it:catequiv2}. It follows from \meqref{it:catequiv1} directly.
\end{proof}

Due to the correspondence between matched pairs and Manin triples
of Lie algebras, we can define the \weak homomorphism of matched pairs
of Lie algebras directly from Definition~\mref{de:whomMT}.

\begin{defi}
Let $(\g,\g^*,\ad_\g^*,\ad^*_{\g^*})$ and
$(\h,\h^*,\ad_\h^*,\ad^*_{\h^*})$ be matched pairs of Lie
algebras. A {\bf \weak homomorphism} between them is a Lie algebra
homomorphism
$$f:\g\bowtie \g^*\rightarrow \h\bowtie\h^*$$
that restricts to Lie algebra homomorphisms
$$f|_\g:\g\mto \h,\quad  f|_{\g^*}:\g^*\mto \h^*.$$
If $f$ is bijective, it is called a {\bf \weak isomorphism  of matched pairs}. Let
$\MP$
denote the category of such matched pairs of Lie algebras with the
morphisms thus defined. \mlabel{de:whomMP}
\end{defi}

\begin{rmk}
We only define the \weak homomorphisms for matched pairs of the form
$(\g,\g^*,\ad_\g^*,\ad^*_{\g^*})$. This is enough for our purpose. A \weak homomorphism between any two matched pairs can be defined in the same way.
\end{rmk}
\vspace{-.2cm}
On the other hand, by Theorem~\mref{thm:3.1} we also have the following compatibility of \weak homomorphisms of matched pairs.

\begin{pro}
Let $(\g,\g^*,\ad_\g^*,\ad^*_{\g^*})$ be a matched pair of Lie
algebras. Then a linear map $f:\g\bowtie \g^*\rightarrow \g\bowtie
\g^*$  is a \weak endomorphism of  this matched pair of
Lie algebras if and only if $((\g,f|_\g),(\g^*,f|_{\g^*}),
\ad_\g^*,\ad_{\g^*}^*)$ is a matched pair of endo Lie algebras. \mlabel{pp:endmp}
\end{pro}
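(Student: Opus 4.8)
The plan is to deduce the statement directly from the characterization of matched pairs of endo Lie algebras in Theorem~\mref{thm:3.1}, in exact parallel with how the Manin-triple analogue Proposition~\mref{pp:endmt} is obtained. The first step is to unwind Definition~\mref{de:whomMP} in the endomorphism case: to say that $f$ is a \weak endomorphism of the matched pair $(\g,\g^*,\ad_\g^*,\ad_{\g^*}^*)$ is to say that $f:\g\bowtie\g^*\mto\g\bowtie\g^*$ is a Lie algebra endomorphism that preserves the two subalgebras $\g$ and $\g^*$ and restricts on them to Lie algebra endomorphisms $f|_\g:\g\mto\g$ and $f|_{\g^*}:\g^*\mto\g^*$. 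The crucial bookkeeping observation is that, since $f$ preserves the direct summands $\g$ and $\g^*$, it decomposes in the notation of Eq.~\meqref{eq:Liehom} as $f=f|_\g+f|_{\g^*}$; conversely the right-hand side of the asserted equivalence only makes sense once $f|_\g$ and $f|_{\g^*}$ are defined, i.e. once $f$ restricts to $\g$ and $\g^*$, so both sides implicitly carry this same preservation hypothesis and there is no discrepancy on that point.

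For the forward direction I would argue as follows. Assuming $f$ is a \weak endomorphism, the restrictions $f|_\g$ and $f|_{\g^*}$ are Lie algebra endomorphisms, so $(\g,f|_\g)$ and $(\g^*,f|_{\g^*})$ are endo Lie algebras, while $(\g,\g^*,\ad_\g^*,\ad_{\g^*}^*)$ is a matched pair of Lie algebras by hypothesis. The statement that $f=f|_\g+f|_{\g^*}$ is a Lie algebra endomorphism of $\g\bowtie\g^*$ is precisely the assertion that $(\g\bowtie\g^*,f|_\g+f|_{\g^*})$ is an endo Lie algebra. Applying Theorem~\mref{thm:3.1} with $\h=\g^*$, $\rho=\ad_\g^*$, $\mu=\ad_{\g^*}^*$, $\phi_\g=f|_\g$ and $\phi_\h=f|_{\g^*}$ then yields that $((\g,f|_\g),(\g^*,f|_{\g^*}),\ad_\g^*,\ad_{\g^*}^*)$ is a matched pair of endo Lie algebras. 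The backward direction is merely the reverse reading of the same equivalence: if $((\g,f|_\g),(\g^*,f|_{\g^*}),\ad_\g^*,\ad_{\g^*}^*)$ is a matched pair of endo Lie algebras, then by definition $f|_\g$ and $f|_{\g^*}$ are Lie algebra endomorphisms, and Theorem~\mref{thm:3.1} again gives that $(\g\bowtie\g^*,f|_\g+f|_{\g^*})$ is an endo Lie algebra, that is, $f$ is a Lie algebra endomorphism of $\g\bowtie\g^*$ restricting to $f|_\g$ and $f|_{\g^*}$, which is exactly a \weak endomorphism of the matched pair.

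I do not expect a genuine obstacle: the entire analytic content, namely the compatibility of the coadjoint actions $\ad_\g^*$ and $\ad_{\g^*}^*$ with the endomorphisms $f|_\g$ and $f|_{\g^*}$, is already packaged inside Theorem~\mref{thm:3.1}, and the present statement is just its reformulation through Definition~\mref{de:whomMP}. The only points requiring care are the identification $f=f|_\g+f|_{\g^*}$ and the matching of the endomorphism $\phi_\g+\phi_\h$ appearing in Theorem~\mref{thm:3.1} with $f$ itself; one should also note that condition~(3) in the definition of a matched pair of endo Lie algebras, namely that $(\g,\g^*,\ad_\g^*,\ad_{\g^*}^*)$ is a matched pair of Lie algebras, is supplied for free by the standing hypothesis, so that only the two representation conditions have to be traded against the endomorphism property, which is precisely what Theorem~\mref{thm:3.1} accomplishes.
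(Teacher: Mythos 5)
Your proof is correct and takes essentially the same approach as the paper: the paper states Proposition~\ref{pp:endmp} with no written proof, introducing it as a direct consequence of Theorem~\ref{thm:3.1}, and your argument is exactly that deduction carried out in detail (specializing $\h=\g^*$, $\rho=\ad_\g^*$, $\mu=\ad_{\g^*}^*$, $\phi_\g=f|_\g$, $\phi_\h=f|_{\g^*}$, and trading the endomorphism property of $f=f|_\g+f|_{\g^*}$ against the two representation conditions, with condition (c) supplied by the standing hypothesis). Your explicit treatment of the implicit requirement that $f$ preserve the summands $\g$ and $\g^*$ is a sensible clarification and introduces no discrepancy with the statement.
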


We further have
\begin{pro}
\mlabel{pro:MP}
\begin{enumerate}
\item \mlabel{it:mp1}
Let $(\g\bowtie\g^*,\g,\g^*)$ and $(\g\bowtie\h^*,\h,\h^*)$ be
Manin triples of Lie algebras and let
$(\g,\g^*,\ad_\g^*,\ad^*_{\g^*})$ and
$(\h,\h^*,\ad_\h^*,\ad^*_{\h^*})$ be the corresponding matched
pairs of Lie algebras. Then a linear map $f:\g\bowtie
\g^*\rightarrow \h\bowtie\h^*$ is a \weak homomorphism of Manin
triples if and only if $f$ is a \weak homomorphism of matched
pairs.
\item \mlabel{it:mp2}
The correspondence in \eqref{it:mp1}
gives an equivalence from the category $\MP$ of matched pairs of the form $(\g,\g^*,\ad_\g^*,\ad^*_{\g^*})$ to
the category $\MT$ of Manin triples.
\end{enumerate}
\end{pro}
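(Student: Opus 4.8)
The plan is to observe that, once the underlying Lie algebra structures are identified, the two notions of \weak homomorphism in question are given by literally the same data, so that part~\meqref{it:mp1} holds almost by definition and part~\meqref{it:mp2} follows by packaging this into a functor.

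For \meqref{it:mp1}, I would first recall from the discussion preceding Theorem~\mref{thm:frob} that the Lie bracket $[\;,\;]_\bowtie$ on $\g\bowtie\g^*$ appearing in the Manin triple $(\g\bowtie\g^*,\g,\g^*)$ is precisely the bracket produced via Eq.~\meqref{eq:3.7} from the matched pair $(\g,\g^*,\ad_\g^*,\ad^*_{\g^*})$; the same applies to $\h\bowtie\h^*$. Thus the Lie algebra $\g\bowtie\g^*$, together with its vector-space decomposition $\g\oplus\g^*$ into the two subalgebras, is the \emph{same} object whether we view it through the Manin triple or through the matched pair. Comparing Definition~\mref{de:whomMT} with Definition~\mref{de:whomMP}, both a \weak homomorphism of Manin triples and a \weak homomorphism of matched pairs are, verbatim, a Lie algebra homomorphism $f:\g\bowtie\g^*\mto\h\bowtie\h^*$ whose restrictions $f|_\g:\g\mto\h$ and $f|_{\g^*}:\g^*\mto\h^*$ are again Lie algebra homomorphisms. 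Hence a map $f$ satisfies one condition if and only if it satisfies the other, which proves \meqref{it:mp1}.

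For \meqref{it:mp2}, I would define a functor $F:\MP\mto\MT$ sending a matched pair $(\g,\g^*,\ad_\g^*,\ad^*_{\g^*})$ to the associated Manin triple $(\g\bowtie\g^*,\g,\g^*)$ and acting as the identity on morphisms; this is well defined on arrows by \meqref{it:mp1}, and it trivially preserves identities and composition. By \meqref{it:mp1} the map $F$ induces a bijection on each hom-set, so $F$ is fully faithful. It remains to check essential surjectivity, which is exactly the content of Theorem~\mref{thm:frob}: every Manin triple of Lie algebras associated to $(\g,[\;,\;]_\g)$ and $(\g^*,[\;,\;]_{\g^*})$ arises from the matched pair $(\g,\g^*,\ad_\g^*,\ad^*_{\g^*})$, and conversely every such matched pair yields a Manin triple. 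In fact $F$ is a bijection on objects, since the Manin triple $(\g\bowtie\g^*,\g,\g^*)$ recovers $\rho=\ad_\g^*$ and $\mu=\ad_{\g^*}^*$ from its bracket together with the decomposition into subalgebras, so $F$ is even an isomorphism of categories and, in particular, an equivalence.

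Since the two definitions coincide on the nose, there is no computational obstacle here; the only step requiring care—and the one I would state explicitly—is the identification used in \meqref{it:mp1}, namely that the bracket on $\g\bowtie\g^*$ coming from the Manin triple agrees with the one coming from the matched pair via Eq.~\meqref{eq:3.7}, which is guaranteed by Theorem~\mref{thm:frob}. Everything else is a formal consequence of matching the two definitions.
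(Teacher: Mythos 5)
Your proposal is correct and follows essentially the same route as the paper: both rest on the observation that Definitions~\mref{de:whomMT} and~\mref{de:whomMP} prescribe literally the same data once the bracket on $\g\bowtie\g^*$ is identified via Eq.~\meqref{eq:3.7} and Theorem~\mref{thm:frob}, so part~\meqref{it:mp1} is immediate and part~\meqref{it:mp2} follows from the identity bijection on hom-sets. Your write-up merely spells out what the paper leaves implicit (the bracket identification, full faithfulness, and essential surjectivity of the induced functor), which is a faithful elaboration rather than a different argument.
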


\begin{proof}
\meqref{it:mp1} follows directly from Definitions~\mref{de:whomMT} and~\mref{de:whomMP}.
\smallskip

\noindent
\meqref{it:mp2}
By \meqref{it:mp1}, there is a bijection between the set of \weak
homomorphisms $f:\g\bowtie \g^*\rightarrow \h\bowtie\h^*$ of Manin
triples and the set of \weak homomorphisms $f:\g\bowtie
\g^*\rightarrow \h\bowtie\h^*$ of matched pairs by sending $f$ to
$f$ itself. This gives the desired equivalence.
\end{proof}

Combining Propositions~\mref{thm:catequiv} and \mref{pro:MP}, we the following three way equivalence of categories.

\begin{thm}\label{thm:equivalence111}
Under the finite-dimensional assumption, the following categories
are equivalent.
\begin{enumerate}
\item the category $\LB$ of Lie bialgebras; \item  the category
$\MT$ of Manin triples;
\item the category ${\bf MP}$ of matched pairs of the form $(\g,\g^*,\ad_\g^*,\ad^*_{\g^*})$.
\end{enumerate}
\mlabel{co:maplie}
\end{thm}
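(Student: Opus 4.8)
The plan is to treat the category $\MT$ of Manin triples as a common hub and to transport through it the two equivalences that have already been established, so that the present statement reduces to a purely formal combination of those results with the transitivity of categorical equivalence. The substantive work has been isolated in Propositions~\mref{thm:catequiv} and~\mref{pro:MP}, and I expect there to be no genuine obstacle beyond bookkeeping: the only thing left to observe is that the relevant assignments on morphisms respect identities and composition, which is immediate from their explicit descriptions.

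First I would invoke Proposition~\mref{thm:catequiv}.(\mref{it:catequiv2}), which provides an equivalence of categories $\LB\to\MT$ sending a Lie bialgebra $(\g,[\;,\;]_\g,\delta_\g)$ to its associated Manin triple $(\g\bowtie\g^*,\g,\g^*)$ and sending a \weak homomorphism $(\phi,\psi)$ to the linear map $\phi+\psi^*$. By part~(\mref{it:catequiv1}) this assignment is a bijection on the pertinent Hom-sets, and essential surjectivity on objects follows since every Manin triple of Lie algebras arises from a Lie bialgebra through Theorems~\mref{thm:frob} and~\mref{thm:md}; functoriality is clear from the formula $(\phi,\psi)\mapsto\phi+\psi^*$. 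Next I would invoke Proposition~\mref{pro:MP}.(\mref{it:mp2}), which yields an equivalence $\MP\to\MT$ that is essentially the identity on underlying data, since a matched pair $(\g,\g^*,\ad_\g^*,\ad^*_{\g^*})$ and its Manin triple share the Lie algebra $\g\bowtie\g^*$ together with the distinguished subalgebras $\g$ and $\g^*$, and which sends each \weak homomorphism $f$ to itself; by part~(\mref{it:mp1}) this is a bijection on Hom-sets and is trivially compatible with composition.

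Finally, since equivalence of categories is an equivalence relation, having equivalences $\LB\simeq\MT$ and $\MP\simeq\MT$ immediately gives the full three-way equivalence: composing the functor $\LB\to\MT$ with a quasi-inverse of $\MP\to\MT$ produces a direct equivalence $\LB\simeq\MP$. The main point I would emphasize in writing this out is that no new verification is required at the level of objects or morphisms beyond what Propositions~\mref{thm:catequiv} and~\mref{pro:MP} already supply; the proof is therefore a one-line consequence of transitivity, and the only care needed is to record that the two cited correspondences are genuine functors, which the explicit formulas $(\phi,\psi)\mapsto\phi+\psi^*$ and $f\mapsto f$ make transparent.
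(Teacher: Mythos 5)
Your proposal is correct and matches the paper's own argument: the paper proves this theorem exactly by combining Proposition~\ref{thm:catequiv} (the equivalence $\LB\simeq\MT$) with Proposition~\ref{pro:MP} (the equivalence $\MP\simeq\MT$) and invoking transitivity of categorical equivalence. The extra remarks you make about essential surjectivity and functoriality are just the bookkeeping implicit in those propositions, so nothing in your route differs in substance from the paper's.
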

\vspace{-.3cm}
\subsubsection{Comparison with the existing notion of morphisms of Lie bialgebras}
We now compare the notion of \weak homomorphisms of Lie bialgebras in Definition~\mref{de:liebialghom} with the existing notions of homomorphisms   of Lie bialgebras and Manin triples~\mcite{CP}.

\begin{defi} (\mcite{CP})
A {\bf homomorphism of Lie bialgebras} from
$(\g,[\;,\;]_\g,\delta_\g)$ to $(\h,[\;,\;]_\h,\delta_\h)$ is a
linear map $f:\g\mto \h$ that is both a Lie algebra homomorphism
and a Lie coalgebra homomorphism:
$\delta_\h f=(f\otimes
f)\delta_\g.$
If $f$ is also bijective,
then $f$ is called an {\bf isomorphism of Lie
bialgebras}. \mlabel{de:hom}
\end{defi}

The following result shows that in the bijective case, our notion
of \weak homomorphisms coincides with the usual notion of
isomorphisms of Lie bialgebras.

\begin{pro}\label{pro:iso}
Let $(\g,[\;,\;]_\g,\delta_\g)$ and $(\h,[\;,\;]_\h,\delta_\h)$ be
two Lie bialgebras. Then $(\g,[\;,\;]_\g,\delta_\g)$ is isomorphic
to $(\h,[\;,\;]_\h,\delta_\h)$ if and only if there exists a Lie
algebra isomorphism $\phi:\g\mto\h$ such that $(\phi,\phi^{-1})$ is
a \weak isomorphism.
\end{pro}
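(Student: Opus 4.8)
The plan is to prove both directions using the obvious candidate: in each case the single Lie bialgebra isomorphism $f$ and the \weak isomorphism $(\phi,\psi)$ are related by $(\phi,\psi)=(f,f^{-1})$. Thus the whole statement amounts to checking that, once $\phi$ is a Lie algebra isomorphism and $\psi=\phi^{-1}$, the coalgebra homomorphism property of $\phi$ is equivalent to condition \meqref{eq:pp1}, while condition \meqref{eq:pp2} is automatic.

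For the forward direction, suppose $f\colon\g\mto\h$ is an isomorphism of Lie bialgebras in the sense of Definition~\mref{de:hom}, so $f$ is a Lie algebra isomorphism and $\delta_\h f=(f\otimes f)\delta_\g$. I set $\phi:=f$ and $\psi:=f^{-1}$ and verify the four requirements of a \weak isomorphism in Definition~\mref{de:liebialghom}. That $\phi$ is a Lie algebra homomorphism and that both maps are bijective are immediate. To see that $\psi=f^{-1}$ is a Lie coalgebra homomorphism, I compose the coalgebra relation $\delta_\h f=(f\otimes f)\delta_\g$ on the left with $f^{-1}\otimes f^{-1}$ and on the right with $f^{-1}$, obtaining $\delta_\g f^{-1}=(f^{-1}\otimes f^{-1})\delta_\h$. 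For \meqref{eq:pp1}, I substitute $\psi=f^{-1}$ into its right-hand side and use the coalgebra relation: $(f^{-1}\otimes\id_\h)\delta_\h f=(f^{-1}\otimes\id_\h)(f\otimes f)\delta_\g=(\id_\g\otimes f)\delta_\g$, which is the left-hand side. For \meqref{eq:pp2}, I apply $f$ to $[x,f^{-1}(y)]_\g$ and use that $f$ is a Lie algebra homomorphism to reach $[f(x),y]_\h$; applying $f^{-1}$ then yields the desired identity $f^{-1}[f(x),y]_\h=[x,f^{-1}(y)]_\g$.

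For the converse, suppose $\phi\colon\g\mto\h$ is a Lie algebra isomorphism for which $(\phi,\phi^{-1})$ is a \weak isomorphism. I claim $f:=\phi$ is already an isomorphism of Lie bialgebras; since $\phi$ is a Lie algebra isomorphism, only the coalgebra compatibility $\delta_\h\phi=(\phi\otimes\phi)\delta_\g$ remains. This I extract from \meqref{eq:pp1} with $\psi=\phi^{-1}$: composing $(\id_\g\otimes\phi)\delta_\g=(\phi^{-1}\otimes\id_\h)\delta_\h\phi$ on the left with $\phi\otimes\id_\h$ collapses the right-hand side to $\delta_\h\phi$ and the left-hand side to $(\phi\otimes\phi)\delta_\g$, giving exactly the required relation.

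There is no genuine obstacle here; the content is entirely the bookkeeping of tensor factors and left/right composition. The one point worth flagging is that condition \meqref{eq:pp2} plays no role in the converse and is automatically satisfied in the forward direction as soon as $\phi$ is a Lie algebra isomorphism. Hence the real core of the argument is recognizing \meqref{eq:pp1}, together with the constraint $\psi=\phi^{-1}$, as precisely the coalgebra homomorphism property $\delta_\h\phi=(\phi\otimes\phi)\delta_\g$ of $\phi$.
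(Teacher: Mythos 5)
Your proof is correct, and it shares the paper's skeleton: the correspondence is $f\leftrightarrow(\phi,\psi)=(f,f^{-1})$, and everything reduces to checking that the four requirements of Definition~\mref{de:liebialghom} collapse under the constraint $\psi=\phi^{-1}$. The mechanics, however, differ in a way worth recording. The paper's proof asserts that Eqs.~\meqref{eq:pp1}--\meqref{eq:pp2} ``hold automatically'' and then handles the Lie coalgebra homomorphism condition by dualizing: $\phi$ is a coalgebra homomorphism iff $\phi^*$ is a Lie algebra homomorphism, and $\phi^*$ is an isomorphism iff $(\phi^{-1})^*=(\phi^*)^{-1}$ is; this step silently invokes the standing finite-dimensionality hypothesis. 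You never pass to dual spaces: you obtain the coalgebra property of $f^{-1}$ by composing $\delta_\h f=(f\otimes f)\delta_\g$ with $f^{-1}\otimes f^{-1}$ and $f^{-1}$, and in the converse you extract $\delta_\h\phi=(\phi\otimes\phi)\delta_\g$ directly from Eq.~\meqref{eq:pp1} by composing with $\phi\otimes\id_\h$. This buys two things. First, your argument works for Lie bialgebras of arbitrary dimension, since Definitions~\mref{de:bial}, \mref{de:liebialghom} and \mref{de:hom} never mention $\g^*$, whereas the paper's dualization step does not. Second, you make precise what the paper's ``automatically'' blurs: with $\psi=\phi^{-1}$, Eq.~\meqref{eq:pp2} is indeed automatic once $\phi$ is a Lie algebra isomorphism, but Eq.~\meqref{eq:pp1} is \emph{not} automatic --- it is exactly equivalent to $\phi$ being a Lie coalgebra homomorphism, which is the substantive content being matched on the two sides of the equivalence. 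Your closing observation isolates this correctly, and it is a sharper accounting of where each hypothesis is used than the paper's own proof provides.
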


\begin{proof}
Take $\psi=\phi^{-1}$. Then Eqs.~(\mref{eq:pp1}) and (\mref{eq:pp2})
hold automatically. Moreover, $\phi^*$ is an isomorphism of Lie
algebras if and only if ${\phi^{-1}}^*$ is an isomorphism of Lie
algebras.
\end{proof}
\vspace{-.2cm}
\begin{rmk}
However, in general, homomorphisms of Lie
bialgebras and \weak homomorphisms of Lie bialgebras are not
related. For example, when $\g=\h$, $f+f^*$ is usually not an endomorphism of the Lie algebra $\g\bowtie \g^*$.
\end{rmk}

Next we consider another notion of homomorphisms of
Manin triples of Lie algebras, originated from the notion of isomorphisms of Manin triples~\mcite{CP}.

\begin{defi}
    \mlabel{de:homMT}
    Let $(\g\bowtie\g^*,\g,\g^*)$ and
    $(\h\bowtie\h^*,\h,\h^*)$ be Manin triples of Lie
    algebras. A {\bf \strong homomorphism} between them is a
    Lie algebra homomorphism
    $$f:\g\bowtie \g^*\rightarrow \h\bowtie\h^*$$
    that restricts to Lie algebra homomorphisms
    $$f|_\g:\g\mto \h,\quad  f|_{\g^*}:\g^*\mto \h^*$$
    and is compatible with the bilinear forms from the Manin triples:
    \begin{equation}
        \mlabel{eq:MT2b}
        \frakB_{\g,d}(x,y)=\frakB_{\h,d}(f(x),f(y)),\;\;\forall x,y\in \g\bowtie \g^*.
    \end{equation}
\end{defi}

A bijective \strong homomorphism between two Manin
	triples is exactly the known notion of an {\bf isomorphism}
	between two Manin triples~\cite{CP}.
In general, a \strong homomorphism of Manin triples is a \weak homomorphism plus the compatibility condition in Eq.~\meqref{eq:MT2b}. This extra condition has a quite significant consequence.
\begin{pro}
    Let $(\g\bowtie\g^*,\g,\g^*)$ and $(\h\bowtie\h^*,\h,\h^*)$ be two
    Manin triples of Lie algebras. Let $(\phi,\psi)$ be a \strong homomorphism
    between them. Then
    $\psi\phi=\id.$
    In particular $\phi$ is injective and $\psi$ is surjective.
    \mlabel{pp:mthom}
\end{pro}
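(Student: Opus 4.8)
The plan is to distill the single scalar identity hidden in the bilinear-form compatibility~\meqref{eq:MT2b} and feed it into the nondegeneracy of the pairing. Recall from Proposition~\mref{thm:catequiv} that presenting a homomorphism $f:\g\bowtie\g^*\mto\h\bowtie\h^*$ of Manin triples as a pair $(\phi,\psi)$ means $f=\phi+\psi^*$ with $\phi=f|_\g:\g\mto\h$ and $\psi^*=f|_{\g^*}:\g^*\mto\h^*$, whence $\psi=(f|_{\g^*})^*:\h\mto\g$; a \strong homomorphism is such an $f$ together with the extra requirement~\meqref{eq:MT2b}. The key observation is that, since by~\meqref{eq:3.9} the form $\frakB_{\g,d}$ pairs $\g$ only with $\g^*$ (vanishing on $\g\times\g$ and on $\g^*\times\g^*$), all the content of~\meqref{eq:MT2b} is extracted by evaluating it on a pair $(x,b^*)$ with $x\in\g$ and $b^*\in\g^*$.

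First I would evaluate~\meqref{eq:MT2b} at such a pair. On the left, the definition~\meqref{eq:3.9} gives $\frakB_{\g,d}(x,b^*)=\langle x,b^*\rangle$. On the right, $f(x)=\phi(x)\in\h$ and $f(b^*)=\psi^*(b^*)\in\h^*$, so $\frakB_{\h,d}(f(x),f(b^*))=\langle\phi(x),\psi^*(b^*)\rangle$. Applying the defining property~\meqref{eq:trans} of the transpose $\psi^*$ of $\psi$ rewrites this right-hand pairing as $\langle\phi(x),\psi^*(b^*)\rangle=\langle\psi\phi(x),b^*\rangle$. Hence~\meqref{eq:MT2b} collapses to
\begin{equation*}
\langle x,b^*\rangle=\langle\psi\phi(x),b^*\rangle,\quad\forall\, x\in\g,\ b^*\in\g^*.
\end{equation*}

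Next I would invoke nondegeneracy of the pairing between $\g$ and $\g^*$: the identity $\langle x-\psi\phi(x),b^*\rangle=0$ for all $b^*\in\g^*$ forces $x=\psi\phi(x)$ for every $x\in\g$, that is, $\psi\phi=\id_\g$. The two remaining assertions are then purely formal consequences of having a one-sided inverse: $\psi\phi=\id$ exhibits a left inverse of $\phi$, so $\phi$ is injective, and a right inverse of $\psi$, so $\psi$ is surjective.

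I do not expect a genuine obstacle here, as the whole argument is a short unwinding of definitions. The one point demanding care is the bookkeeping of conventions: correctly identifying $f|_{\g^*}$ with $\psi^*$ (and not with $\psi$) and applying the transpose relation~\meqref{eq:trans} in the right direction, so that the cross-term of $\frakB_d$ yields precisely the composite $\psi\phi$ rather than $\phi\psi$ or an adjoint thereof.
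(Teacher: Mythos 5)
Your proposal is correct and matches the paper's own argument: both evaluate the compatibility condition~\meqref{eq:MT2b} on a mixed pair $(x,b^*)\in\g\times\g^*$, use the transpose identity to turn $\frakB_{\h,d}(\phi(x),\psi^*(b^*))$ into $\frakB_{\g,d}(\psi\phi(x),b^*)$, and conclude $\psi\phi=\id$ from nondegeneracy of the pairing. The paper's proof is just a more compressed version of the same computation.
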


\begin{proof}
    By Eq.~(\mref{eq:MT2b}), we have
$\frakB_{\g,d}( x,a)=\frakB_{\h,d}(\phi(x),\psi^*(a))=\frakB_{\g,d}(
        \psi\phi(x),a)$ for all $x\in \g,a\in \g^*.
$   Hence $\psi\phi=\id$.
\end{proof}

We make the following remarks on the various notions of homomorphisms of Lie bialgebras and of Manin triples.

\begin{rmk}
\begin{enumerate}
\item  It is also known that an
isomorphism of Lie bialgebras amounts to an isomorphism of the
corresponding Manin triples.
\item In general, the homomorphisms
of Lie bialgebras in Definition~\mref{de:hom} do not correspond to
\strong homomorphisms of Manin triples in
Definition~\mref{de:homMT}. Indeed, since the underlying Lie
algebra $\g$ in a Lie bialgebra corresponds to the Lie subalgebra
in a Manin triple $(\g\bowtie\g^*,\g,\g^*)$, it is naturally
expected that the homomorphism of the Lie bialgebra is given by
$f|_\g$ in the homomorphism $f=f|_\g+f|_{\g^*}$ of the Manin
triple. Then by Proposition~\mref{pp:mthom}, the homomorphism
$f|_\g$ of Lie bialgebras will need to be injective. This is an
unusually strong restriction.
\item The
injectivity condition is due to the compatibility condition in
Eq.~\meqref{eq:MT2b}. Eliminating this condition leaves us with
the notion of \weak homomorphisms of Manin triples in
Definition~\mref{de:whomMP}. As shown in
Proposition~\mref{thm:catequiv}, this notion is compatible with the
notion of \weak homomorphisms of Lie bialgebras in
Definition~\mref{de:liebialghom}.
\end{enumerate}
\end{rmk}

To finish the discussion in this section, we compare the notion of
\weak homomorphisms  of Lie bialgebras with the
notion of weak homomorphisms introduced in \mcite{TBGS}.

\begin{defi} Let
$(\g,[\;,\;],\delta_1)$ and $(\g,[\;,\;],\delta_2)$ be Lie bialgebras. A
{\bf weak homomorphism}  from
$(\g,[\;,\;],\delta_2)$ to $(\g,[\;,\;],\delta_1)$
consists of a Lie algebra homomorphism $\phi:\g\rightarrow \g$ and a Lie coalgebra homomorphism $\psi:(\g,\delta_2)\rightarrow
(\g,\delta_1)$ such that 
\begin{equation}\psi[\phi(x),y]=[x,\psi(y)],\;\;\forall x,y\in
\g.
\mlabel{eq:req3}
\end{equation} If in addition, both $\phi$ and
$\psi$ are linear isomorphisms, then $(\phi,\psi)$ is called a
{\bf weak isomorphism} from
$(\g,[\;,\;],\delta_2)$ to $(\g,[\;,\;],\delta_1)$.
\end{defi}

Note that the above notions of weak homomorphisms and weak
isomorphisms  are defined only when the two Lie bialgebras have
the same underlying Lie algebra $\g$. Further they are mainly
available for the triangular Lie bialgebras in \cite{TBGS}, that
is, they are constructed from skew-symmetric classical $r$-matrices. In this case, Eq.~\meqref{eq:pp2} is the
same as Eq.~\meqref{eq:req3}, and Eq.~(\mref{eq:pp1}) holds
automatically, which implies that the two notions of  \weak and weak  homomorphisms
of Lie bialgebras coincide.
\vspace{-.3cm}
\section{Coboundary endo Lie bialgebras and homomorphisms of  classical  $r$-matrices}
\mlabel{sec:rmat}
\vspace{-.1cm}
In this section, we study coboundary endo Lie bialgebras and introduce the notions of \weak homomorphisms of classical
$r$-matrices. A \weak homomorphism between two classical
$r$-matrices gives a \weak homomorphism of their corresponding Lie
bialgebras.
\vspace{-.2cm}
\subsection{Coboundary endo Lie bialgebras}

Let $\g$ be a Lie algebra. For a given $r\in \g\otimes \g$, define
$\delta_r:\g\mto \g\ot\g$ by
 \begin{equation}
 \delta_r(x):=(\id\otimes \ad(x)+\ad(x)\otimes \id)(r), \quad \forall x\in \g.
 \mlabel{eq:4.1}
 \end{equation}
The following is an important construction of Lie bialgebras.
 \begin{pro} \mlabel{rmk:4.2} {\rm(\cite{CP})} Let $(\g,[\;,\;])$ be a Lie algebra and $r\in \g\otimes \g$. 
 Then
$(\g,[\;,\;],\delta_r)$ is a Lie bialgebra, which is called a {\bf coboundary Lie bialgebra}, if and only if for
all $x\in \g$,
 \begin{equation}
\big(\ad(x)\otimes \id+\id\otimes\ad(x)\big)(r+\tau(r))=0,
 \mlabel{eq:4.111}
\end{equation}
 \begin{equation}\mlabel{eq:4.2}
 \big(\ad(x)\ot \id\ot \id+\id\ot\ad(x)\ot\id+\id\otimes \id\otimes \ad(x)\big)([r_{12},r_{13}]+[r_{13},r_{23}]+[r_{12},r_{23}])=0.
 \end{equation}
 Here $\tau:\g\otimes \g\rightarrow \g\otimes \g$ is the flip map
 and, writing $r=\sum_ia_i\otimes b_i$, we denote
$$
 [r_{12},r_{13}]=\sum_{i,j}[a_i,a_j]\otimes b_i\otimes b_j,
 [r_{13},r_{23}]=\sum_{i,j}a_i\otimes a_j\otimes [b_i,b_j],
 [r_{23},r_{12}]=\sum_{i,j}a_j\otimes [a_i,b_j]\otimes b_i.
$$
\end{pro}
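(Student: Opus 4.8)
The plan is to check, one at a time, the three defining conditions of a Lie bialgebra for $(\g,[\;,\;],\delta_r)$ and to match them against Eqs.~\meqref{eq:4.111} and~\meqref{eq:4.2}. By Definitions~\mref{de:lieco} and~\mref{de:bial}, $(\g,[\;,\;],\delta_r)$ is a Lie bialgebra exactly when (i) $\delta_r$ is coantisymmetric, (ii) $\delta_r$ satisfies the co-Jacobian identity, and (iii) the cocycle compatibility Eq.~\meqref{eq:3.14} holds. It is convenient to write $\ad^{(2)}(x):=\ad(x)\ot\id+\id\ot\ad(x)$ for the tensor-product action of $\g$ on $\g\ot\g$, so that the defining formula~\meqref{eq:4.1} becomes $\delta_r(x)=\ad^{(2)}(x)(r)$.

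I would first dispose of condition (iii), which I expect to hold for every $r$ with no further hypothesis: this is the general principle that a coboundary is automatically a cocycle. Since $\ad^{(2)}$ is a representation of $\g$, it satisfies $[\ad^{(2)}(x),\ad^{(2)}(y)]=\ad^{(2)}([x,y])$; applying this to $r$ turns the right-hand side of Eq.~\meqref{eq:3.14} into $\ad^{(2)}(x)\ad^{(2)}(y)(r)-\ad^{(2)}(y)\ad^{(2)}(x)(r)=\ad^{(2)}([x,y])(r)=\delta_r[x,y]$, which is the left-hand side. Hence Eq.~\meqref{eq:3.14} holds identically.

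Next I would identify condition (i) with Eq.~\meqref{eq:4.111}. Because the flip $\tau$ commutes with the diagonal action, $\tau\,\ad^{(2)}(x)=\ad^{(2)}(x)\,\tau$, and therefore $\delta_r(x)+\tau\delta_r(x)=\ad^{(2)}(x)\big(r+\tau(r)\big)$. Thus coantisymmetry $\delta_r=-\tau\delta_r$ holds for all $x$ if and only if $\ad^{(2)}(x)(r+\tau(r))=0$ for all $x$, which is precisely Eq.~\meqref{eq:4.111}.

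The main work, and the step I expect to be the real obstacle, is matching condition (ii) with Eq.~\meqref{eq:4.2}. Here I would expand the co-Jacobiator $(\id+\sigma+\sigma^2)(\id\ot\delta_r)\delta_r(x)$ using~\meqref{eq:4.1}: iterating $\delta_r$ produces an element of $\g^{\ot 3}$ assembled from double adjoint actions on $r\ot r$, and the cyclic symmetrizer $\id+\sigma+\sigma^2$ collects these terms. Using the Jacobi identity in $\g$ together with the coantisymmetry furnished by Eq.~\meqref{eq:4.111}, I would reorganize the result into $\ad^{(3)}(x):=\ad(x)\ot\id\ot\id+\id\ot\ad(x)\ot\id+\id\ot\id\ot\ad(x)$ applied to the classical Yang--Baxter expression $C(r):=[r_{12},r_{13}]+[r_{13},r_{23}]+[r_{12},r_{23}]$, up to an overall scalar. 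The conclusion is that, under condition (i), the co-Jacobian identity for $\delta_r$ is equivalent to $\ad^{(3)}(x)(C(r))=0$ for all $x$, i.e.\ to Eq.~\meqref{eq:4.2}. The delicate point is the bookkeeping: one must track the many tensor terms produced by $(\id\ot\delta_r)\delta_r$ and apply coantisymmetry in exactly the right slots to fold them into the six terms of $C(r)$; grouping the terms according to which tensor factor carries the outer $\ad(x)$ keeps the computation under control. Assembling the three steps gives the stated equivalence.
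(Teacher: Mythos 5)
The paper itself gives no proof of this proposition: it is quoted verbatim from~\cite{CP}, so there is no internal argument to compare against, and your proposal amounts to reconstructing the classical proof of Drinfeld/Chari--Pressley. Your decomposition into the three defining conditions is the right one, and two of the three steps are carried out completely and correctly: the observation that $\delta_r$ is automatically a $1$-cocycle because $[\ad^{(2)}(x),\ad^{(2)}(y)]=\ad^{(2)}([x,y])$, and the identification of coantisymmetry with Eq.~\meqref{eq:4.111} via $\tau\,\ad^{(2)}(x)=\ad^{(2)}(x)\,\tau$, are both exactly as in the standard treatment.

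The weakness is your third step, which is where the entire content of the proposition lives. You assert that, assuming Eq.~\meqref{eq:4.111}, the co-Jacobiator $(\id+\sigma+\sigma^2)(\id\ot\delta_r)\delta_r(x)$ equals $\ad^{(3)}(x)$ applied to $C(r)=[r_{12},r_{13}]+[r_{13},r_{23}]+[r_{12},r_{23}]$ up to a nonzero scalar, but you never exhibit this identity; you only describe the bookkeeping strategy you would use. The assertion is true --- it is precisely the computation carried out in~\cite{CP} --- and the hedge ``up to an overall scalar'' is harmless since only the vanishing of the expression matters. But expanding $(\id\ot\delta_r)\delta_r(x)$ produces twelve terms built from double brackets acting on $r\ot r$, and showing that after cyclic symmetrization they reassemble into $\ad^{(3)}(x)(C(r))$ plus terms annihilated by the invariance of $r+\tau(r)$ is a genuinely delicate calculation, not a routine verification. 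As written, the proposal defers the only nontrivial step of the proof, so it should be regarded as a correct outline rather than a complete argument; to finish it you must write out the expansion, apply Eq.~\meqref{eq:4.111} in the slots where the symmetric part of $r$ appears, and track the signs coming from the Jacobi identity to confirm the claimed identification.
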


For endo Lie bialgebras, we similarly define
\begin{defi} \mlabel{de:4.1}
	An endo Lie bialgebra $((\g,\phi), \delta, \psi)$ is called {\bf coboundary} if $\delta=\delta_r$  for some $r\in \g\ot \g$.
\end{defi}
Then we obtain
\begin{thm}
\mlabel{thm:pq} Let $(\g, \phi)$ be an endo Lie algebra and $\psi$
dually represent $(\g,\phi)$. Let  $r\in \g\otimes \g$. Then the
linear map $\delta_r$ induces an
endo Lie bialgebra $((\g, \phi), \delta_r, \psi)$ if and only if
$r$ satisfies Eqs.~\meqref{eq:4.111} and \meqref{eq:4.2}, and for
any $x\in \g$, the following conditions hold:
  \begin{eqnarray}\mlabel{eq:corbo}
(\psi\ad(x)\ot\id) (\id\ot \psi-\phi\ot \id)(r)+(\id\ot
 \psi\ad(x))(\psi\ot \id-\id\ot \phi)(r)&=&0,\\
 \mlabel{eq:P*admissible1}
 (\ad(x)\ot \id+\id\ot \ad \phi(x))(\id\ot \phi-\psi\ot
\id)(r)&=&0.
\end{eqnarray}
\end{thm}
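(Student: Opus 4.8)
The plan is to prove Theorem~\ref{thm:pq} by invoking Corollary~\ref{co:rbb}, which says that $((\g,\phi),\delta_r,\psi)$ is an endo Lie bialgebra if and only if the quintuple $(\g,[\;,\;]_\g,\delta_r,\phi,\psi)$ satisfies the four conditions of Definition~\ref{de:rbbial}. Since $\psi$ already dually represents $(\g,\phi)$ by hypothesis, Eq.~\eqref{eq:pduql} holds automatically, so I would dispose of that condition at once. The remaining content splits cleanly into two groups: the conditions that make $(\g,[\;,\;]_\g,\delta_r)$ an ordinary Lie bialgebra, and the conditions involving the endomorphisms $\phi,\psi$.

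First I would handle the Lie bialgebra part. By Proposition~\ref{rmk:4.2}, $(\g,[\;,\;]_\g,\delta_r)$ is a (coboundary) Lie bialgebra precisely when $r$ satisfies Eqs.~\eqref{eq:4.111} and \eqref{eq:4.2}; this accounts for the first displayed pair of conditions in the statement and requires no new computation. What remains from Definition~\ref{de:rbbial} are the endo Lie coalgebra condition \eqref{eq:corb}, namely $(\psi\ot\psi)\delta_r=\delta_r\psi$, and the mixed compatibility condition \eqref{eq:pduqrd}, namely $(\id\ot\phi)\delta_r=(\psi\ot\id)\delta_r\phi$. My task is to show that these two abstract conditions translate, for the specific cocommutator $\delta_r$ defined in Eq.~\eqref{eq:4.1}, into Eqs.~\eqref{eq:corbo} and \eqref{eq:P*admissible1} respectively.

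The core of the proof is therefore a direct substitution. I would write $\delta_r(x)=(\id\ot\ad(x)+\ad(x)\ot\id)(r)$ and expand each side of \eqref{eq:pduqrd} and \eqref{eq:corb}, pushing $\phi$ and $\psi$ through the tensor factors. The key algebraic inputs are that $\phi$ is a Lie algebra endomorphism, so $\phi\,\ad(x)=\ad(\phi(x))\,\phi$, together with the dual-representation relation Eq.~\eqref{eq:pduql}, which in the coadjoint setting gives the twisted intertwining identity $\psi\,\ad(x)=\ad(?)\cdots$ needed to move $\psi$ across $\ad(x)$. After collecting terms I expect \eqref{eq:pduqrd} to reorganize into the two-factor expression $(\ad(x)\ot\id+\id\ot\ad\phi(x))(\id\ot\phi-\psi\ot\id)(r)=0$, which is exactly \eqref{eq:P*admissible1}; and \eqref{eq:corb}, after using coantisymmetry of $\delta_r$ and the flip, to collapse into the symmetric two-term expression \eqref{eq:corbo}.

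The main obstacle will be the bookkeeping in matching the expanded tensor expressions to the precise grouped forms \eqref{eq:corbo} and \eqref{eq:P*admissible1}, since several terms only combine after repeatedly applying the endomorphism property of $\phi$ and the dual-representation identity, and after exploiting the skew-symmetry relation \eqref{eq:4.111} to pair up $r$ with $\tau(r)$. I would want to verify that no spurious terms survive and that the arbitrariness of $x\in\g$ is genuinely used to pass from the coalgebra-level equalities to the pointwise tensor identities. Once the substitutions are carried out and the terms regrouped, the equivalence is immediate, and the theorem follows by combining this with the Lie bialgebra characterization from Proposition~\ref{rmk:4.2} and the automatic validity of \eqref{eq:pduql}.
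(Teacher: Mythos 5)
Your plan coincides with the paper's own proof: the paper likewise reduces via Definition~\ref{de:rbbial} and Proposition~\ref{rmk:4.2} (using the hypotheses to discharge Eq.~\eqref{eq:pduql} and the Lie bialgebra axioms) to showing that Eq.~\eqref{eq:corb} is equivalent to Eq.~\eqref{eq:corbo} and Eq.~\eqref{eq:pduqrd} to Eq.~\eqref{eq:P*admissible1}, and it carries out exactly the substitution you describe, writing $r=\sum_i a_i\otimes b_i$ and using $\phi\,\ad(x)=\ad(\phi(x))\,\phi$ together with the dual-representation identity in the form $\ad(x)\,\psi=\psi\,\ad(\phi(x))$, equivalently $[\psi(x),a]=\psi[x,\phi(a)]$, which is what your ``$\psi\,\ad(x)=\ad(?)\cdots$'' placeholder should resolve to. One small correction to your anticipated bookkeeping: Eq.~\eqref{eq:4.111} plays no role in the term-matching --- both equivalences hold term by term, with no pairing of $r$ against $\tau(r)$ needed.
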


\begin{proof} By Definition~\ref{de:rbbial} and
Proposition~\ref{rmk:4.2}, $((\g, \phi), \delta_r, \psi)$ is an
endo Lie bialgebra if and only if $r$ satisfies
Eqs.~\meqref{eq:4.111} and \meqref{eq:4.2}, and
Eqs.~\meqref{eq:corb} and ~\meqref{eq:pduqrd} hold. Set
$r=\sum_ia_i\otimes b_i$. For $x\in\g$, we have
\begin{eqnarray*}
&&(\psi\otimes \psi)\delta_r(x)-\delta_r\psi(x)\\
&=&\sum_i(\psi([x,a_i])\otimes \psi(b_i)+\psi(a_i)\otimes
\psi([x,b_i])-[\psi(x),a_i]\otimes b_i-a_i\otimes
[\psi(x),b_i])\\
&=&(\psi\ad(x)\otimes \id )(\id\otimes
\psi)(r)+(\id\otimes\psi\ad(x))(\psi\otimes\id)(r)\\
&&-\sum_i\big(\psi[x,\phi(a_i)]\otimes
b_i+a_i\otimes \psi[x,\phi(b_i)]\big)\\
&=&(\psi\ad(x)\otimes \id )(\id\otimes
\psi)(r)+(\id\otimes\psi\ad(x))(\psi\otimes\id)(r)\\
&&-(\psi\ad(x)\otimes \id)(\phi\otimes \id)(r) -(\id\otimes
\psi\ad(x))(\id\otimes \phi)(r)\\
&=&(\psi\ad(x)\ot\id) (\id\ot \psi-\phi\ot \id)(r)+(\id\ot
 \psi\ad(x))(\psi\ot \id-\id\ot \phi)(r).
\end{eqnarray*}
Thus Eq.~\meqref{eq:corb} holds if and only if
Eq.~\meqref{eq:corbo} holds.

Similarly we have
\begin{eqnarray*}
&& (\id\otimes \phi)\delta_r(x)-(\psi\otimes\id)\delta_r
\phi(x)\\
&=&\sum_i\big([x,a_i]\otimes \phi(b_i)+a_i\otimes
[\phi(x),\phi(b_i)]-\psi[\phi(x),a_i]\otimes
b_i-\psi(a_i)\otimes [\phi(x),b_i]\big)\\
&=&(\ad(x)\otimes\id)(\id\otimes \phi)(r)+(\id\otimes
\ad\phi(x))(\id\otimes \phi)(r)\\
&&-(\id\otimes \ad\phi(x))(\psi\otimes
\id)(r)-(\ad(x)\otimes\id)(\psi\otimes \id)(r)\\
&=&(\ad(x)\ot \id+\id\ot \ad \phi(x))(\id\ot \phi-\psi\ot \id)(r).
\end{eqnarray*}
Thus Eq.~\meqref{eq:pduqrd} holds if and only if
Eq.~\meqref{eq:P*admissible1} holds. Therefore the conclusion
holds.
\end{proof}

Consequently we have the following conclusion on \weak
homomorphisms of Lie bialgebras.

\begin{cor} Let $(\g,[\;,\;])$ be a Lie algebra and $\phi:\g\rightarrow \g$ be a Lie algebra
endomorphism. Let $\psi:\g\rightarrow \g$ be a linear map
satisfying Eq.~\meqref{eq:pduql}. Let $r\in \g\otimes \g$. 
 Then
$(\g,[\;,\;],\delta_r)$ is a Lie bialgebra and $(\phi,\psi)$ is
a \weak endomorphism of Lie bialgebras if and only if
Eqs.~\meqref{eq:4.111}--\meqref{eq:P*admissible1} are satisfied.
 \end{cor}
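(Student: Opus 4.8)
The plan is to obtain the statement by assembling Theorem~\mref{thm:pq} with the reformulation of endo Lie bialgebras as Lie bialgebras carrying a \weak endomorphism, so almost no new computation is required. The first step is to verify that the two standing hypotheses place us exactly in the hypotheses of Theorem~\mref{thm:pq}. Since $\phi$ is assumed to be a Lie algebra endomorphism, $(\g,\phi)$ is an endo Lie algebra; and by the Corollary recording Eq.~\meqref{eq:Lieadmiss}, the assumption that $\psi$ satisfies Eq.~\meqref{eq:pduql} is precisely the statement that $\psi$ \dreping $(\g,\phi)$. Thus both inputs needed to invoke Theorem~\mref{thm:pq} are already in force.

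Next I would translate the compound left-hand condition of the biconditional into a single structural statement. By Definition~\mref{de:rbbial}, an endo Lie bialgebra $((\g,\phi),\delta_r,\psi)$ consists of the data that $(\g,[\;,\;]_\g,\delta_r)$ is a Lie bialgebra together with a pair $(\phi,\psi)$ which, by the Proposition immediately following Definition~\mref{de:auto-weakhomLie}, is exactly a \weak endomorphism of that Lie bialgebra. Hence the conjunction ``$(\g,[\;,\;],\delta_r)$ is a Lie bialgebra and $(\phi,\psi)$ is a \weak endomorphism of Lie bialgebras'' is literally the assertion that the linear map $\delta_r$ induces the endo Lie bialgebra $((\g,\phi),\delta_r,\psi)$.

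Finally I would apply Theorem~\mref{thm:pq} directly. Under the hypotheses secured in the first step, that theorem states that $\delta_r$ induces the endo Lie bialgebra $((\g,\phi),\delta_r,\psi)$ if and only if $r$ satisfies Eqs.~\meqref{eq:4.111} and \meqref{eq:4.2} (which, by Proposition~\mref{rmk:4.2}, is what makes $(\g,[\;,\;],\delta_r)$ a coboundary Lie bialgebra) together with Eqs.~\meqref{eq:corbo} and \meqref{eq:P*admissible1}. These four equations are exactly the span Eqs.~\meqref{eq:4.111}--\meqref{eq:P*admissible1}, which closes the equivalence and completes the argument.

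Because every computational ingredient is already established, there is no genuine analytic obstacle here; the only point requiring care is the bookkeeping identification in the middle step, namely confirming that ``Lie bialgebra plus \weak endomorphism'' and ``endo Lie bialgebra'' name the same object, so that no side condition is silently gained or dropped when one passes between the coboundary hypothesis on $\delta_r$ and the structural hypotheses on the pair $(\phi,\psi)$.
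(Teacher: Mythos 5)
Your proposal is correct and is essentially the paper's own (implicit) derivation: the paper introduces this corollary with ``Consequently'' right after Theorem~\mref{thm:pq}, intending exactly the assembly you describe --- the hypotheses place one in the setting of Theorem~\mref{thm:pq} (with Eq.~\meqref{eq:pduql} being the dual-representation condition Eq.~\meqref{eq:Lieadmiss}), the conjunction ``Lie bialgebra plus \weak endomorphism'' is identified with ``endo Lie bialgebra'' via the proposition following Definition~\mref{de:auto-weakhomLie}, and Theorem~\mref{thm:pq} then yields the four equations Eqs.~\meqref{eq:4.111}--\meqref{eq:P*admissible1}. No step is missing.
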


As an application of Theorem~\ref{thm:pq}, we obtain the following
doubles from endo Lie bialgebras which are analogues of doubles
from Lie bialgebras.

\begin{thm} Let $((\g, \phi),\delta, \psi)$ be an endo Lie bialgebra. Let $\alpha:\g^*\mto \g^*\ot \g^*$ be the linear dual of the multiplication on $\g$. Then $((\g^*,\psi^*),-\alpha,\phi^*)$ is also an endo Lie bialgebra.
 Further there is an endo Lie  bialgebra structure on the direct sum $\g\oplus \g^*$ of
 the underlying vector spaces of $\g$ and $\g^*$ which contains the two endo Lie bialgebras as endo Lie sub-bialgebras.
 \mlabel{thm:4.7}
 \end{thm}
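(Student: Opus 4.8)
The plan is to construct the endo Lie bialgebra structure on $\g\oplus\g^*$ by applying the correspondence between endo Lie bialgebras and Manin triples of endo Lie algebras (Theorem~\mref{thm:rbinfbialg}), thereby reducing the problem to producing a suitable matched pair of endo Lie algebras. First I would verify the claim that $((\g^*,\psi^*),-\alpha,\phi^*)$ is an endo Lie bialgebra. The triple $(\g^*,\alpha)$ is the usual dual object: since $(\g,[\;,\;]_\g,\delta)$ is a Lie bialgebra, its dual $(\g^*,\alpha,-\delta^*)$ is again a Lie bialgebra, where $\alpha=[\;,\;]_\g^*$ is the Lie cobracket on $\g^*$ coming from the bracket on $\g$, and $-\delta^* =: [\;,\;]_{\g^*}$ is the Lie bracket on $\g^*$ coming from the cobracket $\delta$. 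So the underlying Lie bialgebra structure on $\g^*$ is standard. What must be checked are the endomorphism compatibilities of Definition~\mref{de:rbbial} for the pair $(\psi^*,\phi^*)$ acting on $(\g^*,-\alpha)$. By Theorem~\mref{thm:rbinfbialg}, the original data being an endo Lie bialgebra is equivalent to $((\g,\phi),(\g^*,\psi^*),\ad_\g^*,\ad_{\g^*}^*)$ being a matched pair of endo Lie algebras; this matched pair condition is manifestly symmetric in the two factors, so swapping $\g$ and $\g^*$ (equivalently $\phi\leftrightarrow\psi^*$) immediately yields that $((\g^*,\psi^*),(\g,\phi),\ad_{\g^*}^*,\ad_\g^*)$ is also a matched pair of endo Lie algebras. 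Feeding this back through Theorem~\mref{thm:rbinfbialg}, read from the viewpoint of $\g^*$, gives precisely that $((\g^*,\psi^*),-\alpha,\phi^*)$ is an endo Lie bialgebra.

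Next I would build the double. The matched pair $((\g,\phi),(\g^*,\psi^*),\ad_\g^*,\ad_{\g^*}^*)$ of endo Lie algebras, which we have from Theorem~\mref{thm:rbinfbialg}, furnishes by Theorem~\mref{thm:3.1} an endo Lie algebra structure on the direct sum, namely $(\g\bowtie\g^*,\phi+\psi^*)$. Together with the standard invariant bilinear form $\frakB_d$ of Eq.~\meqref{eq:3.9}, Theorem~\mref{thm:3.7} shows that $(\g\bowtie\g^*,\phi+\psi^*,\frakB_d)$ is in fact a quadratic endo Lie algebra and hence the data assemble into a Manin triple of endo Lie algebras $((\g\bowtie\g^*,\phi+\psi^*),(\g,\phi),(\g^*,\psi^*))$. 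Finally, I would transport this Manin triple of endo Lie algebras back to an endo Lie bialgebra on $\g\oplus\g^*$ via the equivalence in Theorem~\mref{thm:rbinfbialg} (the implication \meqref{it:rbb2} $\Rightarrow$ \meqref{it:rbb3}), noting that the underlying vector space of this endo Lie bialgebra is $\g\oplus\g^*$ and that the endomorphism datum is the restriction $\phi+\psi^*$ on each summand. The sub-bialgebra claim then follows because $\g$ and $\g^*$ sit inside $\g\bowtie\g^*$ as the two Lie subalgebras of the matched pair, stable under $\phi+\psi^*$ by construction, and their induced bialgebra and coalgebra structures are exactly those of $((\g,\phi),\delta,\psi)$ and $((\g^*,\psi^*),-\alpha,\phi^*)$.

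The main obstacle I anticipate is bookkeeping the endomorphism compatibility conditions correctly under dualization, rather than any genuinely hard computation. Specifically, the assertion that the dual carries the endomorphism $\phi^*$ paired with cobracket operator $\psi^*$ and the sign $-\alpha$ requires checking that Eqs.~\meqref{eq:pduqrd}--\meqref{eq:pduql} for the original data translate, under the pairing of Eq.~\meqref{eq:pair} and the transpose conventions of Eq.~\meqref{eq:trans}, into the analogous compatibility conditions for $(\g^*,-\alpha)$ with $(\psi^*,\phi^*)$; the symmetry of the matched-pair formulation makes this clean, but one must be careful that the roles of the ``Lie algebra endomorphism'' and the ``Lie coalgebra endomorphism'' are interchanged when passing to the dual, since $\phi$ is the algebra endomorphism on $\g$ while $\phi^*$ plays the role of the coalgebra endomorphism on $\g^*$, and dually for $\psi$. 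Once the symmetry of the matched-pair condition is invoked, however, these verifications reduce to transposing identities already established, and the construction of the double is then a direct application of Theorems~\mref{thm:3.1}, \mref{thm:3.7}, and~\mref{thm:rbinfbialg}.
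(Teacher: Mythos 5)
Your reduction for the first assertion is essentially sound, but note a sign you gloss over: reading Theorem~\mref{thm:rbinfbialg} from the viewpoint of $\g^*$, the ``dual of the multiplication on the dual space'' is the dual of $[\;,\;]_\g$ on $(\g^*)^*\cong\g$, which is $+\alpha$, so the symmetry of the matched-pair condition yields that $((\g^*,\psi^*),\alpha,\phi^*)$ is an endo Lie bialgebra, not ``precisely'' the claimed $((\g^*,\psi^*),-\alpha,\phi^*)$. To pass to $-\alpha$ you must add the (easy, but necessary) observation that negating a cobracket preserves all the axioms of Definition~\mref{de:rbbial}: coantisymmetry, co-Jacobi, the cocycle condition and Eqs.~\meqref{eq:pduqrd}--\meqref{eq:pduql} are all invariant under $\delta\mapsto-\delta$. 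The paper instead quotes \mcite{CP} for the fact that $(\g^*,[\;,\;]_{\g^*},-\alpha)$ is a Lie bialgebra and checks the two dual-representation conditions; either route is fine once the sign is handled.

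The second part, however, has a genuine gap. You propose to obtain the endo Lie bialgebra structure on $\g\oplus\g^*$ by running the implication \meqref{it:rbb2}$\Rightarrow$\meqref{it:rbb3} of Theorem~\mref{thm:rbinfbialg} on the Manin triple $((\g\bowtie\g^*,\phi+\psi^*),(\g,\phi),(\g^*,\psi^*))$. But that implication produces an endo Lie bialgebra structure on $\g$ --- the first member of the associated pair --- not on the double: in item~\meqref{it:rbb3} the underlying space of the bialgebra is $\g$ and the cobracket is $\delta$. Your route is therefore circular (it ends exactly where it started), and at no point does it produce a cobracket on $\g\oplus\g^*$, nor a Lie coalgebra endomorphism of the double; an endo Lie bialgebra requires a pair of structure maps, and you only ever name $\phi+\psi^*$. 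Without an explicit cobracket the sub-bialgebra claim cannot even be formulated, and it is exactly this cobracket that forces the sign $-\alpha$ on the $\g^*$ factor. What is needed --- and what the paper does --- is the coboundary construction: take a basis $\{e_i\}$ of $\g$ with dual basis $\{e^i\}$, set $r=\sum_i e_i\ot e^i\in(\g\bowtie\g^*)^{\ot 2}$ and $\delta_r(u)=(\id\ot\ad(u)+\ad(u)\ot\id)(r)$; then use Lemma~\mref{lem:abas} to see that $\psi+\phi^*$ dually represents $(\g\bowtie\g^*,\phi+\psi^*)$, verify the identities $((\phi+\psi^*)\ot\id-\id\ot(\psi+\phi^*))(r)=0$ and $((\psi+\phi^*)\ot\id-\id\ot(\phi+\psi^*))(r)=0$, and invoke Theorem~\mref{thm:pq} together with the classical results of \mcite{CP} to conclude that $((\g\bowtie\g^*,\phi+\psi^*),\delta_r,\psi+\phi^*)$ is an endo Lie bialgebra whose restrictions to $\g$ and $\g^*$ are exactly $\delta$ and $-\alpha$. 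Your use of Theorems~\mref{thm:3.1} and~\mref{thm:3.7} to build the Manin triple is correct as far as it goes, but it cannot substitute for this coboundary step.
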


 \begin{proof}
 Denote the product on the Lie algebra $\g^*$ by
$[\;,\;]_{\g^*}$. By \mcite{CP}, $(\g^*,[\;,\;]_{\g^*},-\alpha)$
is a Lie bialgebra. Moreover, $\psi$ \dreping the
endo Lie algebra $(\g,\phi)$ whose algebra structure is given by
$-\alpha$ if and only if $\psi$ \dreping the endo Lie algebra
$(\g,\phi)$ whose algebra structure is given by $\alpha$.
Therefore with the fact that $\phi^*$ \dreping $(\g^*,\psi^*)$, we
obtain that $((\g^*,\psi^*),-\alpha,\phi^*)$ is an endo Lie
bialgebra.

Let $\{e_1, e_2, \cdots, e_n\}$ be a basis of $\g$ and $\{e^1,
e^2, \cdots, e^n\}$ its dual basis. Let
$r=\sum\limits^n_{i=1}e_i\otimes e^i$. Consider the Lie algebra
 $\g\bowtie \g^*$ induced by the matched pair $(\g,\g^*,\ad^*_\g,\ad^*_{\g^*})$. Define
 $$\delta_r(u)=(\id\otimes \ad_{\g\bowtie \g^*}(u)+\ad_{\g\bowtie \g^*}(u)\otimes
 \id)(r),\;\;\forall u\in \g\bowtie \g^*.$$
By Lemma~\mref{lem:abas}, $(\g\bowtie \g^*, \phi+\psi^*)$ is an
 endo Lie algebra that is \dreped by $(\psi+\phi^*)$. Hence
Eq. (\mref{eq:pduql}) holds. Since
\vspace{-.2cm}
\begin{eqnarray*}
((\phi+\psi^*)\otimes \id- \id\otimes (\psi+ \phi^*))(r)
 &=&\sum^n_{i=1}(\phi(e_i)\otimes e^i-e_i\otimes
 \phi^*(e^i))\stackrel{}{=}0,\\
((\psi+\phi^*)\otimes \id- \id\otimes (\phi+ \psi^*))(r)
 &=&\sum^n_{i=1}(\psi(e_i)\otimes e^i-e_i\otimes \psi^*(e^i))\stackrel{}{=}0,
 \end{eqnarray*}
Eqs.~(\mref{eq:corbo})--(\mref{eq:P*admissible1}) hold. By
\emph{\mcite{CP}}, we know that $r$ satisfies Eqs.
(\mref{eq:4.111}) and (\mref{eq:4.2}) and hence $ (\g\bowtie
\g^*,[\;,\;]_{\bowtie},\delta_r)$ is a Lie bialgebra containing
$(\g,[\;,\;]_\g,\delta)$ and $(\g^*,[\;,\;]_{\g^*},-\alpha)$ as
Lie sub-bialgebras. Thus $((\g\bowtie \g^*, \phi+\psi^*),
\delta_r, \psi+\phi^*)$ is an endo Lie bialgebra. It is obvious
that it contains $((\g, \phi),\delta, \psi)$ and
$((\g^*,\psi^*),-\alpha,\phi^*)$ as endo Lie sub-bialgebras.
 This completes the proof.
\end{proof}

Therefore there is the following construction of \weak
homomorphisms on the doubles of Lie  bialgebras.

\begin{cor} Let $(\g,[\;,\;],\delta)$ be a Lie bialgebra and $(\phi,\psi)$
be a \weak endomorphism. Let $\alpha:\g^*\mto \g^*\ot \g^*$ be the
linear dual of the multiplication on $\g$. Then $(\g^*,-\alpha^*)$
is a Lie bialgebra and $(\psi^*,\phi^*)$ is a \weak
endomorphism. Furthermore, there is a Lie  bialgebra structure on
the direct sum $\g\oplus \g^*$ of
 the underlying vector spaces of $\g$ and $\g^*$ which contains the two Lie bialgebras as Lie
 sub-bialgebras and $(\phi+\psi^*,\psi+\phi^*)$ is a \weak
 homomorphism.
\end{cor}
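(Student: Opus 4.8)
The plan is to deduce this corollary directly from Theorem~\ref{thm:4.7} by depolarization, the only translation needed being the Proposition following Definition~\ref{de:auto-weakhomLie}, which asserts that a \weak endomorphism $(\phi,\psi)$ of a Lie bialgebra $(\g,[\;,\;],\delta)$ is the same data as an endo Lie bialgebra $((\g,\phi),\delta,\psi)$. Thus the hypothesis that $(\phi,\psi)$ is a \weak endomorphism puts at our disposal the endo Lie bialgebra $((\g,\phi),\delta,\psi)$, to which Theorem~\ref{thm:4.7} applies.

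First I would apply Theorem~\ref{thm:4.7} to $((\g,\phi),\delta,\psi)$. This produces the endo Lie bialgebra $((\g^*,\psi^*),-\alpha,\phi^*)$ and, with $r=\sum_i e_i\ot e^i$ for a basis $\{e_i\}$ of $\g$ and its dual basis $\{e^i\}$, the endo Lie bialgebra $((\g\bowtie\g^*,\phi+\psi^*),\delta_r,\psi+\phi^*)$ on $\g\oplus\g^*$ containing the two factors as endo Lie sub-bialgebras. Reading each of these back through the Proposition following Definition~\ref{de:auto-weakhomLie} yields the assertions: the endo Lie bialgebra $((\g^*,\psi^*),-\alpha,\phi^*)$ says precisely that $(\g^*,[\;,\;]_{\g^*},-\alpha)$ is a Lie bialgebra and that $(\psi^*,\phi^*)$ is a \weak endomorphism of it, with $\psi^*$ the Lie algebra component and $\phi^*$ the Lie coalgebra component; likewise the endo Lie bialgebra on $\g\oplus\g^*$ says that $(\g\bowtie\g^*,[\;,\;]_{\bowtie},\delta_r)$ is a Lie bialgebra and that $(\phi+\psi^*,\psi+\phi^*)$ is a \weak endomorphism, hence a fortiori a \weak homomorphism. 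Forgetting the endomorphisms in the containment statement of Theorem~\ref{thm:4.7} gives the required containment of $(\g,[\;,\;]_\g,\delta)$ and $(\g^*,[\;,\;]_{\g^*},-\alpha)$ as Lie sub-bialgebras.

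I expect no genuine computational obstacle here, since the substantive work --- checking Eqs.~\eqref{eq:corbo}--\eqref{eq:P*admissible1} for this $r$ together with the classical double identities of \cite{CP} --- has already been carried out in the proof of Theorem~\ref{thm:4.7}. The only points demanding care are bookkeeping: tracking the interchange of the roles of $\phi$ and $\psi$ under dualization, so that on $\g^*$ it is $\psi^*$ that serves as the Lie algebra endomorphism and $\phi^*$ as the Lie coalgebra endomorphism, and handling the sign $-\alpha$ consistently so that $(\g^*,[\;,\;]_{\g^*},-\alpha)$ is genuinely a Lie bialgebra in the sense of Definition~\ref{de:bial}.
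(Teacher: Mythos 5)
Your proposal is correct and is exactly the argument the paper intends: the corollary is stated immediately after Theorem~\ref{thm:4.7} with no separate proof, precisely because it is the depolarized reading of that theorem through the dictionary (the proposition following Definition~\ref{de:auto-weakhomLie}) identifying a \weak endomorphism $(\phi,\psi)$ of $(\g,[\;,\;],\delta)$ with the endo Lie bialgebra $((\g,\phi),\delta,\psi)$. Your bookkeeping of the role swap on the dual (with $\psi^*$ as the Lie algebra endomorphism and $\phi^*$ as the coalgebra endomorphism of $(\g^*,[\;,\;]_{\g^*},-\alpha)$) matches the paper's conventions, so nothing further is needed.
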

\vspace{-.3cm}
\subsection{Homomorphisms of classical $r$-matrices}
We now lift the relation between classical $r$-matrices and bialgebras to the level of categories.
Theorem~\ref{thm:pq} immediately gives

\begin{cor}
 \mlabel{cor:4.10}
Let $(\g, \phi)$ be an endo Lie algebra and $\psi$ dually
represent $(\g,\phi)$. Let  $r\in \g\otimes \g$. Then the linear
map $ \delta_r$ given by Eq.~\meqref{eq:4.1} induces an endo
Lie bialgebra $((\g, \phi), \delta_r, \psi)$ if
 Eq.~\meqref{eq:4.111} and the following equations hold:
 \begin{equation}\mlabel{eq:4.10}
 [r_{12},r_{13}]+[r_{13},r_{23}]+[r_{12},r_{23}]=0,
 \end{equation}
 \begin{equation}\mlabel{eq:4.11}
 (\phi\otimes \id-\id\otimes \psi)(r)=0,
 \end{equation}
 \begin{equation}\mlabel{eq:4.11a}
 (\psi\otimes \id-\id\otimes \phi)(r)=0.
 \end{equation}
\end{cor}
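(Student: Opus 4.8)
The plan is to derive this as a direct specialization of Theorem~\mref{thm:pq}, which already gives a full characterization of when $\delta_r$ produces an endo Lie bialgebra $((\g,\phi),\delta_r,\psi)$: namely that $r$ satisfies Eqs.~\meqref{eq:4.111} and \meqref{eq:4.2} together with Eqs.~\meqref{eq:corbo} and \meqref{eq:P*admissible1} for all $x\in\g$. Since the present statement asserts only sufficiency, I would simply verify that the four displayed hypotheses \meqref{eq:4.111}, \meqref{eq:4.10}, \meqref{eq:4.11} and \meqref{eq:4.11a} force each of these four conditions of Theorem~\mref{thm:pq} to hold, and then invoke the ``if'' direction of that theorem.

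Condition \meqref{eq:4.111} is a common hypothesis, so nothing is needed there. For Eq.~\meqref{eq:4.2}, note that its content is that a certain operator, applied to the classical Yang--Baxter expression $[r_{12},r_{13}]+[r_{13},r_{23}]+[r_{12},r_{23}]$, vanishes. But hypothesis \meqref{eq:4.10} asserts that this Yang--Baxter expression is already zero, so applying any linear operator to it yields zero; hence Eq.~\meqref{eq:4.2} holds automatically.

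The heart of the matter is the two remaining conditions, and here the point is that \meqref{eq:4.11} and \meqref{eq:4.11a} say precisely that the tensors $(\phi\ot\id-\id\ot\psi)(r)$ and $(\psi\ot\id-\id\ot\phi)(r)$ vanish in $\g\ot\g$. I would then read off that each inner factor occurring in Theorem~\mref{thm:pq} is, up to an overall sign, one of these two tensors: in Eq.~\meqref{eq:corbo} the factors are $(\id\ot\psi-\phi\ot\id)(r)=-(\phi\ot\id-\id\ot\psi)(r)$ and $(\psi\ot\id-\id\ot\phi)(r)$, while in Eq.~\meqref{eq:P*admissible1} the factor is $(\id\ot\phi-\psi\ot\id)(r)=-(\psi\ot\id-\id\ot\phi)(r)$. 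By \meqref{eq:4.11} and \meqref{eq:4.11a} all of these inner factors are zero, so after applying the outer operators $\psi\ad(x)\ot\id$, $\id\ot\psi\ad(x)$, and $\ad(x)\ot\id+\id\ot\ad\phi(x)$ the expressions in Eqs.~\meqref{eq:corbo} and \meqref{eq:P*admissible1} vanish for every $x$.

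With all four hypotheses of Theorem~\mref{thm:pq} now verified, its ``if'' direction yields that $((\g,\phi),\delta_r,\psi)$ is an endo Lie bialgebra. There is essentially no obstacle beyond bookkeeping: the only thing to be careful about is matching signs when rewriting the factors $(\id\ot\psi-\phi\ot\id)$ and $(\id\ot\phi-\psi\ot\id)$ in terms of the hypotheses \meqref{eq:4.11} and \meqref{eq:4.11a}. It is worth remarking that, unlike Theorem~\mref{thm:pq}, this corollary is not an equivalence: the conditions \meqref{eq:4.10}--\meqref{eq:4.11a} are genuinely stronger, since they demand that $r$ solve the classical Yang--Baxter equation on the nose and that the intertwining tensors vanish identically, which is exactly why only the forward implication is claimed.
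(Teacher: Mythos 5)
Your proof is correct and follows the same route as the paper, which presents Corollary~\ref{cor:4.10} as an immediate consequence of Theorem~\ref{thm:pq}: Eq.~\eqref{eq:4.10} kills the Yang--Baxter term in Eq.~\eqref{eq:4.2}, and Eqs.~\eqref{eq:4.11}--\eqref{eq:4.11a} annihilate the inner tensor factors in Eqs.~\eqref{eq:corbo} and \eqref{eq:P*admissible1}. Your sign bookkeeping and your closing remark that the corollary gives only sufficiency (the hypotheses being strictly stronger than the characterizing conditions of the theorem) are both accurate.
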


Eq.~(\mref{eq:4.10}) is just the classical Yang-Baxter equation (CYBE) in a Lie algebra and a solution of the CYBE in a Lie algebra is also called a {\bf
        classical $r$-matrix}. A Lie bialgebra $(\g,[\;,\;],\delta)$ is called
    {\bf quasi-triangular} if it is obtained from a classical $r$-matrix
    $r$ by Eq.~\meqref{eq:4.1} and is called {\bf triangular} if it is obtained from a skew-symmetric classical $r$-matrix $r$ (i.e.
    $r=-\tau(r)$).
 Moreover, it is
    straightforward to show that if $r$ is skew-symmetric, then Eq.~(\mref{eq:4.11}) holds
    if and only if Eq.~(\mref{eq:4.11a}) holds.

\begin{defi}
Let $(\g, \phi)$ be an endo Lie algebra. Let $r\in\g\otimes
\g$ and $\psi\in\End(\g) $. Then
Eq.~(\mref{eq:4.10}) with conditions given by
Eqs.~(\mref{eq:4.11}) and (\mref{eq:4.11a}) is called the {\bf
$\psi$-classical Yang-Baxter equation ($\psi$-CYBE) in $(\g,
\phi)$}. \mlabel{de:4.11}
\end{defi}

As in the case of endo Lie bialgebras, solutions of the
$\psi$-CYBE in an endo Lie algebra motivates us to the notion of
morphisms of classical $r$-matrices that is compatible with \weak
homomorphisms of Lie bialgebras.

\begin{defi}\mlabel{defi:isorr}
    Let $\g,\h$ be Lie algebras and $r_{\g},\;r_{\h}$ be
    classical $r$-matrices  in $\g$ and $\h$ respectively.
    A {\bf \weak homomorphism} from   $r_{\g}$ to $r_{\h}$ consists of a
    Lie algebra homomorphism $\phi:\g\rightarrow\h$ and a linear map $\psi:\h\mto \g$ satisfying
    \begin{eqnarray}(\psi\otimes {\rm id}_\h)(r_\h)&=&({\rm
            id}_\g\otimes \phi)(r_\g),\mlabel{eq:reqqq1}\\
        ({\rm id}_\h\otimes \psi)(r_\h)&=&(\phi\otimes {\rm id}_\g)(r_\g),\mlabel{eq:reqqq2}\\
        \psi[\phi(x),y]_\h&=&[x,\psi(y)]_\g,\;\;\forall x\in \g,y\in
        \h.\mlabel{eq:reqqq3}
    \end{eqnarray} If $\phi$ and $\psi$ are also linear isomorphisms,
    then $(\phi,\psi)$ is called a {\bf \weak isomorphism} from $r_\g$
    to $r_\h$. Let ${\bf Cr}$ denote the category of classical
    $r$-matrices  with the morphisms thus defined.
\end{defi}

Then by Definitions~\ref{de:4.11} and~\ref{defi:isorr}, we obtain
\begin{pro} \mlabel{pp:endocybe}
Let $(\g, \phi)$ be an endo Lie algebra and $\psi\in \End(\g)$
dually represent $(\g, \phi)$. Then $r\in \g\ot \g$ is a solution of the
$\psi$-CYBE in the endo Lie algebra $(\g,\phi)$ if and only if
 $r\in\g\ot\g$ is a classical
$r$-matrix and $(\phi,\psi)$ is a \weak endomorphism on $r$.
\end{pro}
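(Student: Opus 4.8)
The plan is to prove this purely by unwinding both sides of the claimed equivalence into explicit tensor conditions on $r$ and comparing them leg by leg, using the standing hypothesis that $\psi$ dually represents $(\g,\phi)$. Since the statement is ultimately a matching of definitions, I expect both implications to fall out simultaneously rather than requiring two separate arguments.

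First I would observe that the dual-representation hypothesis already supplies one of the three conditions defining a \weak endomorphism on $r$. Indeed, by the corollary to Lemma~\mref{lem:admrep} (Eq.~\meqref{eq:Lieadmiss}) together with Definition~\mref{de:admop}, the statement that $\psi\in\End(\g)$ dually represents $(\g,\phi)$ is equivalent to $\psi[\phi(x),y]=[x,\psi(y)]$ for all $x,y\in\g$, which is exactly Eq.~\meqref{eq:reqqq3}. Hence, under the hypothesis of the proposition, condition~\meqref{eq:reqqq3} of Definition~\mref{defi:isorr} holds automatically for $(\phi,\psi)$ regarded as an endomorphism on $r$, and need not be verified separately on either side of the equivalence.

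Next I would expand the remaining conditions. By Definition~\mref{de:4.11}, saying that $r$ is a solution of the $\psi$-CYBE in $(\g,\phi)$ means that Eqs.~\meqref{eq:4.10}, \meqref{eq:4.11} and \meqref{eq:4.11a} all hold. On the other side, $r$ being a classical $r$-matrix means the CYBE~\meqref{eq:4.10} holds, while $(\phi,\psi)$ being a \weak endomorphism on $r$ means (Definition~\mref{defi:isorr}) that Eqs.~\meqref{eq:reqqq1}, \meqref{eq:reqqq2} and \meqref{eq:reqqq3} hold. The CYBE~\meqref{eq:4.10} thus occurs identically on both sides, and by the previous paragraph Eq.~\meqref{eq:reqqq3} is free; it therefore remains only to match the two auxiliary conditions $\meqref{eq:4.11},\meqref{eq:4.11a}$ against $\meqref{eq:reqqq1},\meqref{eq:reqqq2}$.

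The final step is the bookkeeping of the mixed tensors $(\phi\ot\id)(r)$, $(\id\ot\psi)(r)$, $(\psi\ot\id)(r)$ and $(\id\ot\phi)(r)$. Rewriting Eq.~\meqref{eq:4.11} as $(\phi\ot\id)(r)=(\id\ot\psi)(r)$ shows it to be literally Eq.~\meqref{eq:reqqq2}, and rewriting Eq.~\meqref{eq:4.11a} as $(\psi\ot\id)(r)=(\id\ot\phi)(r)$ shows it to be literally Eq.~\meqref{eq:reqqq1}. Thus the two packages of conditions coincide, yielding the equivalence in both directions at once. The one point requiring care, and the only place the argument could go astray, is precisely this leg-matching: one must track the orientation of the two maps $\phi,\psi$ and ensure each equality lands in the correct tensor slot rather than its transpose. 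Once this is checked the conclusion is immediate.
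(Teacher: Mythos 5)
Your proposal is correct and follows exactly the paper's route: the paper offers no separate argument for Proposition~\mref{pp:endocybe}, deriving it directly from Definitions~\mref{de:4.11} and~\mref{defi:isorr}, which is precisely the definitional unwinding you carry out (with Eq.~\meqref{eq:Lieadmiss} identifying the dual-representation hypothesis with Eq.~\meqref{eq:reqqq3}, and the leg-matching $\meqref{eq:4.11}\leftrightarrow\meqref{eq:reqqq2}$, $\meqref{eq:4.11a}\leftrightarrow\meqref{eq:reqqq1}$ done correctly). No gap; your version simply makes explicit what the paper leaves implicit.
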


Recall from~\mcite{CP} that two classical $r$-matrices $r_1$ and
$r_2$ in a Lie algebra $\g$ are said to be {\bf equivalent} if
there is a Lie algebra isomorphism $\phi:\g\mto\g$ such
that
$    (\phi\otimes \phi) (r_1)=r_2.$

\begin{cor} Let $\g$ be a Lie algebra and $r_1,\;r_2$ be
classical $r$-matrices in $\g$. Then $r_1$ is equivalent to
$r_2$ if and only if there exists a Lie algebra isomorphism
$\phi:\g\mto\g$ such that $(\phi,{\phi^{-1}})$  is a
\weak isomorphism from $r_1$ to $r_2$. \mlabel{co:r-eq}
\end{cor}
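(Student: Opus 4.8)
The plan is to prove the two implications separately, in each case exploiting the fact that the substitution $\psi=\phi^{-1}$ makes the compatibility conditions of Definition~\mref{defi:isorr} collapse to, or follow immediately from, the single equivalence relation $(\phi\ot\phi)(r_1)=r_2$. The only structural input needed is that $\phi$ is a Lie algebra isomorphism, so that $\phi^{-1}$ is again a Lie algebra isomorphism; everything else is a matter of applying $\phi^{-1}$ to one tensor factor at a time.

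For the forward direction, I would assume $r_1$ is equivalent to $r_2$, so there is a Lie algebra isomorphism $\phi$ with $(\phi\ot\phi)(r_1)=r_2$, and set $\psi:=\phi^{-1}$, which is a linear isomorphism. First I would check Eq.~\meqref{eq:reqqq3}: since $\phi^{-1}$ is a Lie algebra homomorphism, $\psi[\phi(x),y]=\phi^{-1}[\phi(x),y]=[x,\phi^{-1}(y)]=[x,\psi(y)]$, so the bracket condition holds automatically. Then Eqs.~\meqref{eq:reqqq1} and \meqref{eq:reqqq2} both follow by applying $\phi^{-1}$ to one tensor factor of $r_2=(\phi\ot\phi)(r_1)$: applying $\phi^{-1}\ot\id$ gives $(\psi\ot\id)(r_2)=(\id\ot\phi)(r_1)$, and applying $\id\ot\phi^{-1}$ gives $(\id\ot\psi)(r_2)=(\phi\ot\id)(r_1)$. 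Hence $(\phi,\phi^{-1})$ is a \weak isomorphism from $r_1$ to $r_2$.

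For the converse, I would assume $\phi$ is a Lie algebra isomorphism such that $(\phi,\phi^{-1})$ is a \weak isomorphism from $r_1$ to $r_2$. Then Eq.~\meqref{eq:reqqq1} reads $(\phi^{-1}\ot\id)(r_2)=(\id\ot\phi)(r_1)$, and applying $\phi\ot\id$ to both sides yields $r_2=(\phi\ot\phi)(r_1)$, which is exactly the equivalence of $r_1$ and $r_2$. (Alternatively, Eq.~\meqref{eq:reqqq2} gives the same conclusion after applying $\id\ot\phi$.)

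The computations are entirely routine, so I do not expect a genuine obstacle; the only point requiring any care is the bookkeeping that, with $\psi=\phi^{-1}$, the two tensor conditions Eqs.~\meqref{eq:reqqq1}--\meqref{eq:reqqq2} reduce to the two ``one-sided'' forms of the single equivalence relation, and that the bracket compatibility Eq.~\meqref{eq:reqqq3} is automatic precisely because $\phi$ (and hence $\phi^{-1}$) respects the Lie bracket. This corollary is the $r$-matrix analogue of Proposition~\mref{pro:iso}, and the argument mirrors it.
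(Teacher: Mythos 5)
Your proposal is correct and follows essentially the same route as the paper's own (very terse) proof: for the forward direction, verify Eqs.~\meqref{eq:reqqq1}--\meqref{eq:reqqq3} directly for $\psi=\phi^{-1}$ by composing with $(\phi\otimes\phi)(r_1)=r_2$ factorwise, and for the converse, recover $(\phi\otimes\phi)(r_1)=r_2$ from Eq.~\meqref{eq:reqqq1}. You have merely spelled out the details the paper leaves as ``straightforward to check,'' and all of them are right.
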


\begin{proof}
If $\phi:\g\mto\g$ is an equivalence from $r_1$ to
$r_2$, then it is straightforward to check that
$(\phi,{\phi^{-1}})$  satisfies Eqs.~(\mref{eq:reqqq1})--(\mref{eq:reqqq3}). Conversely, Eqs.~(\mref{eq:reqqq1}) implies
$(\phi\otimes \phi) (r_1)=r_2$.
\end{proof}

\begin{rmk}
When both $r_1$ and $r_2$ are skew-symmetric in the same Lie
algebra $\g$, then Eq.~(\mref{eq:reqqq1}) holds if and only if
Eq.~(\mref{eq:reqqq2}) holds. On the other hand, there is a notion
of weak homomorphism between two skew-symmetric $r$-matrices in a
Lie algebra $\g$ given in \mcite{TBGS} defined by
Eqs.~(\mref{eq:reqqq1}) and (\mref{eq:reqqq3}) only, that is,
without Eq.~(\mref{eq:reqqq2}). The two notions coincide in the
skew-symmetric case. Note that Definition~\ref{defi:isorr} is
valid without the skew-symmetric restriction and in different Lie
algebras.
\end{rmk}

Recall that for a Lie algebra $\g$ and a classical $r$-matrix $r$
of  in $\g$ satisfying Eq.~\meqref{eq:4.111},  the triple $(\g,[\;,\;],\delta_r)$ is
a quasi-triangular Lie bialgebra. On the level of categories, we obtain

\begin{thm} \mlabel{co:r12}
    Let $\g,\h$ be Lie algebras and $r_\g,r_\h$ be classical $r$-matrices in $\g$ and $\h$ respectively satisfying Eq.~\meqref{eq:4.111}.
If $(\phi, \psi)$ is a \weak homomorphism of the classical
$r$-matrices from $r_\g$ to $r_\h$, then $(\phi,\psi)$ is a \weak
homomorphism of the corresponding Lie bialgebras from
$(\g,[\;,\;]_\g,\delta_{r_\g})$ to $(\g,[\;,\;]_\h, \delta_{r_\h})$. This correspondence
defines a functor from the category ${\bf Crs}$ of classical
$r$-matrices satisfying Eq.~\meqref{eq:4.111}, as a full sub-category of ${\bf Cr}$
to the category ${\bf QTLB}$ of quasi-triangular Lie bialgebras, as a
    full sub-category of ${\bf LB}$.
\end{thm}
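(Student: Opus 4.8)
The plan is to show that the assignment sending an $r$-matrix $r$ (satisfying Eq.~\meqref{eq:4.111}) to the coboundary Lie bialgebra $(\g,[\;,\;],\delta_r)$, and sending a \weak homomorphism $(\phi,\psi)$ of $r$-matrices to the very same pair $(\phi,\psi)$, is a well-defined functor ${\bf Crs}\to{\bf QTLB}$. Since $r_\g$ and $r_\h$ are classical $r$-matrices, they satisfy the CYBE \meqref{eq:4.10}, so Eq.~\meqref{eq:4.2} holds trivially; together with the hypothesis \meqref{eq:4.111}, Proposition~\mref{rmk:4.2} shows that $(\g,[\;,\;]_\g,\delta_{r_\g})$ and $(\h,[\;,\;]_\h,\delta_{r_\h})$ are quasi-triangular Lie bialgebras, so the object assignment lands in ${\bf QTLB}$.

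The heart of the matter is verifying that $(\phi,\psi)$ satisfies the four conditions of Definition~\mref{de:liebialghom}. Condition (a), that $\phi$ is a Lie algebra homomorphism, is part of the hypothesis, and Eq.~\meqref{eq:pp2} of condition (c) is literally Eq.~\meqref{eq:reqqq3}. It remains to establish Eq.~\meqref{eq:pp1} and condition (b), namely that $\psi$ is a Lie coalgebra homomorphism, i.e.\ $(\psi\ot\psi)\delta_{r_\h}=\delta_{r_\g}\psi$. I would prove both by direct computation, writing $r_\g=\sum_i a_i\ot b_i$ and $r_\h=\sum_j c_j\ot d_j$ and expanding via Eq.~\meqref{eq:4.1}; these are the polarized analogues of the computation in the proof of Theorem~\mref{thm:pq}.

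For Eq.~\meqref{eq:pp1}, applying both sides to $x\in\g$, the left side $(\id_\g\ot\phi)\delta_{r_\g}(x)$ becomes $\sum_i(a_i\ot[\phi(x),\phi(b_i)]_\h+[x,a_i]_\g\ot\phi(b_i))$ using that $\phi$ is a homomorphism; the right side $(\psi\ot\id_\h)\delta_{r_\h}(\phi(x))$ is simplified by using Eq.~\meqref{eq:reqqq3} in the form $\psi[\phi(x),c_j]_\h=[x,\psi(c_j)]_\g$ and then replacing $\sum_j\psi(c_j)\ot d_j$ by $\sum_i a_i\ot\phi(b_i)$ via Eq.~\meqref{eq:reqqq1}, matching the left side term by term. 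For condition (b), in $(\psi\ot\psi)\delta_{r_\h}(y)$ one first uses Eqs.~\meqref{eq:reqqq1} and \meqref{eq:reqqq2} to turn $\psi(c_j)$ and $\psi(d_j)$ into $\phi(a_i)$ and $\phi(b_i)$, after which each surviving bracket has the form $[y,\phi(\cdot)]_\h$; applying antisymmetry and Eq.~\meqref{eq:reqqq3} then converts, for instance, $\psi[y,\phi(b_i)]_\h$ into $[\psi(y),b_i]_\g$, producing exactly $\delta_{r_\g}(\psi(y))$.

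The main obstacle is precisely this last point: Eq.~\meqref{eq:reqqq3} only simplifies $\psi$ of a bracket whose first slot lies in the image of $\phi$, whereas the brackets in $(\psi\ot\psi)\delta_{r_\h}(y)$ initially carry the arbitrary element $y\in\h$ in one slot and $c_j$ or $d_j$ in the other. The resolution is the order of application of the relations: invoking Eqs.~\meqref{eq:reqqq1} and \meqref{eq:reqqq2} first is what makes the image of $\phi$ appear in the correct slot so that Eq.~\meqref{eq:reqqq3} becomes applicable. Once the two nontrivial conditions hold, functoriality is immediate: the object map lands in ${\bf QTLB}$, the morphism map leaves $(\phi,\psi)$ unchanged, so identities go to identities and composition---which in both categories reads $(\phi',\psi')\circ(\phi,\psi)=(\phi'\phi,\psi\psi')$---is preserved.
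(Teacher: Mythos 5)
Your proposal is correct and takes essentially the same route as the paper: both polarize the computation in the proof of Theorem~\mref{thm:pq}, expanding $\delta_{r_\g}$ and $\delta_{r_\h}$ and using Eqs.~\meqref{eq:reqqq1}--\meqref{eq:reqqq3} together with antisymmetry to obtain the coalgebra condition and Eq.~\meqref{eq:pp1}, while Eq.~\meqref{eq:pp2} is Eq.~\meqref{eq:reqqq3} verbatim. The only cosmetic difference is that the paper packages the computation as two equivalences whose right-hand sides involve the differences $(\psi\ot\id_\h)(r_\h)-(\id_\g\ot\phi)(r_\g)$ and $(\id_\h\ot\psi)(r_\h)-(\phi\ot\id_\g)(r_\g)$, which vanish by hypothesis, whereas you substitute these identities directly into the expansion.
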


\begin{proof}
For the given pair $(\phi,\psi)$, an argument similar to the
proof of Theorem~\mref{thm:pq}, yields
\begin{enumerate}
\item  \mlabel{it:pq1-1} $(\psi\ot
\psi)\delta_{r_\h}=\delta_{r_\g} \psi$ if and only if for any
$x\in \h$,
 \begin{eqnarray}\mlabel{eq:corbob}
&&(\psi\ad_\h(x)\ot\id_\g) ((\id_\h\ot \psi)(r_\h)-(\phi\ot
\id_\g)(r_\g))\nonumber\\&&+(\id_\g\ot
 \psi\ad_\h(x))((\psi\ot \id_\h)(r_\h)-(\id_\g\ot \phi)(r_\g))=0.
 \end{eqnarray}
  \item
\mlabel{it:pq2-1}
 $(\id_\g\otimes \phi)\delta_{r_\g}=(\psi\otimes\id_\h)\delta_{r_\h} \phi$ if and only if for any $x\in \g$,
\begin{eqnarray}\mlabel{eq:P*admissible1b}
&&(\ad_\g(x)\ot \id_\h+\id_\g\ot \ad_\h \phi(x))((\psi\ot
\id_\h)(r_\h)-(\id_\g\ot \phi)(r_\g))=0.
 \end{eqnarray}
\end{enumerate}
Thus $(\phi,\psi)$ is a \weak homomorphism of Lie bialgebras from
$(\g,[\;,\;]_\g,\delta_{r_\g})$ to $(\g,[\;,\;]_\h, \delta_{r_\h})$. The rest of the
proof is straightforward.
\end{proof}

Specializing to the skew-symmetric case gives the conclusion
\cite[Proposition 7.17]{TBGS}, obtained by a different approach
using $\mathcal O$-operators.

\section{Homomorphisms of $\calo$-operators and pre-Lie algebras}
\mlabel{sec:oop}

We define $\mathcal O$-operators on endo Lie algebras and endo
pre-Lie algebras. 
Then an analysis similar to the previous sections motivates us to define 
\weak homomorphisms  of $\mathcal O$-operators and of pre-Lie algebras, giving rise to the categories of $\mathcal O$-operators and of pre-Lie algebras. Moreover, there are natural functors between these categories and further to the category of triangular Lie bialgebras, completing Diagram~\meqref{eq:bigdiagcat}. As in the previous sections, underneath these functors among categories, there are correspondences among the $\calo$-operator and pre-Lie algebra structures on endo Lie algebras. The discussion is similar and will not be elaborated further.  

\subsection{$\mathcal{O}$-operators on endo Lie algebras and homomorphisms of $\calo$-operators}
\mlabel{ss:roo}
For a vector space $\g$, through the isomorphism $\g\ot \g\cong
\Hom(\g^*,K)\ot \g\cong \Hom(\g^*,\g)$, any $r\in \g\otimes \g$ is
identified with a map from $\g^*$ to $\g$ which we   denote by
$r^\sharp$. Explicitly, writing $r=\sum_{i}a_i\otimes b_i$, then
\begin{equation}
    r^\sharp:\g^*\mto \g, \quad r^\sharp(a^*)=\sum_{i}\langle a^*, a_i \rangle b_i,
    ~~\forall a^*\in \g^*.
    \mlabel{eq:4.12}
\end{equation}
Note that $r$ is skew-symmetric if and only if
\begin{equation}
    \langle r^\sharp(a^*), b^*\rangle+\langle a^*,r^\sharp(b^*)\rangle=0,\;\;\forall
    a^*,b^*\in \g^*.\label{eq:skew-symmetry}
\end{equation}

Recall that an {\bf $\mathcal O$-operator} on a Lie algebra $\g$
associated to a representation $(V,\rho)$ is a linear map
$T:V\rightarrow \g$ satisfying
    \begin{equation}\mlabel{eq:4.18}
    [T(u),T(v)]=T(\rho(T(u))v-\rho(T(v))u), \;\;\forall u, v\in V.
\end{equation}

For an endo Lie algebra, the corresponding notion is
 \begin{defi}
    \mlabel{de:4.17}
Let $(\g, \phi)$ be an endo Lie algebra. Let $(V, \rho)$ be a
representation of the Lie algebra $\g$ and $\alpha:V\mto V$ be a
linear map. A linear map $T: V\mto \g$ is called an
{\bf $\mathcal{O}$-operator on $(\g,\phi)$ associated to
 $(V, \rho)$ and $\alpha$} if $T$ satisfies Eq.~\meqref{eq:4.18} and
    \begin{equation}\mlabel{eq:4.17}
        \phi T=T  \alpha.
    \end{equation}
 If in addition, $(V,\rho,\alpha)$ is a representation of
 $(\g,\phi)$, then $T$ is called an {\bf $\mathcal{O}$-operator associated to
 $(V, \rho,\alpha)$.}
\end{defi}

We have the following relationship between  $\calo$-operators for
endo Lie algebras and solutions of the CYBE in endo Lie algebras.
\begin{pro}
Let $(\g, \phi)$ be an endo Lie algebra and $\psi:\g\mto \g$ be a
linear map. Suppose that $r\in \g\otimes \g$ is skew-symmetric.
 Then $r$ is a solution of the $\psi$-\cybe in $(\g, \phi)$ if and only if $r^\sharp$ is an $\mathcal{O}$-operator associated to $(\g^*, \ad^*)$ and $\psi^*$.
If in addition, $\psi$ \dreping $(\g, \phi)$, then $r$ is a
solution of the $\psi$-\cybe in $(\g, \phi)$ if and only if
$r^\sharp$ is an $\mathcal{O}$-operator on $(\g, \phi)$ associated
    to the representation $(\g^*,\ad^*,\psi^*)$.
        \mlabel{ex:4.20}
\end{pro}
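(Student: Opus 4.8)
The plan is to separate the three conditions bundled in the $\psi$-\cybe (Definition~\mref{de:4.11}) and match them, one by one, against the two defining identities of an $\calo$-operator (Definition~\mref{de:4.17}). First I would invoke the classical correspondence: for a skew-symmetric $r\in\g\ot\g$, the \cybe~\meqref{eq:4.10} holds if and only if $r^\sharp:\g^*\mto\g$ satisfies the $\calo$-operator equation~\meqref{eq:4.18} relative to the coadjoint representation $(\g^*,\ad^*)$. This is the standard skew-symmetric $r$-matrix/$\calo$-operator dictionary, so I would take it as known. It disposes of the half of the $\calo$-operator condition that does not see the endomorphisms.

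Next I would translate the remaining condition $\phi r^\sharp=r^\sharp\psi^*$, which is Eq.~\meqref{eq:4.17} with $\alpha=\psi^*$, into tensor form. Writing $r=\sum_i a_i\ot b_i$ and using the pairings of Eqs.~\meqref{eq:pair} and~\meqref{eq:trans}, a direct computation gives, for every $a^*\in\g^*$,
\begin{equation*}
\phi r^\sharp(a^*)=\sum_i\langle a^*,a_i\rangle\,\phi(b_i),\qquad
r^\sharp\psi^*(a^*)=\sum_i\langle a^*,\psi(a_i)\rangle\,b_i.
\end{equation*}
The first map is the sharp of $(\id\ot\phi)(r)$ and the second is the sharp of $(\psi\ot\id)(r)$, so $\phi r^\sharp=r^\sharp\psi^*$ is equivalent to $(\id\ot\phi)(r)=(\psi\ot\id)(r)$, i.e.\ to Eq.~\meqref{eq:4.11a}. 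Since $r$ is skew-symmetric, Eqs.~\meqref{eq:4.11} and~\meqref{eq:4.11a} are equivalent (as noted in the text preceding Definition~\mref{de:4.11}), so the single identity~\meqref{eq:4.17} encodes both~\meqref{eq:4.11} and~\meqref{eq:4.11a} at once.

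Combining the two steps, $r$ solves the $\psi$-\cybe in $(\g,\phi)$ -- that is, Eqs.~\meqref{eq:4.10}, \meqref{eq:4.11} and~\meqref{eq:4.11a} all hold -- if and only if $r^\sharp$ satisfies both~\meqref{eq:4.18} and~\meqref{eq:4.17}, which is precisely the statement that $r^\sharp$ is an $\calo$-operator associated to $(\g^*,\ad^*)$ and $\psi^*$. This proves the first equivalence. For the second, I would observe that by Definition~\mref{de:admop} the extra hypothesis that $\psi$ \dreping $(\g,\phi)$ is exactly the assertion that $(\g^*,\ad^*,\psi^*)$ is a representation of the endo Lie algebra $(\g,\phi)$. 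Hence the upgraded notion of an $\calo$-operator on $(\g,\phi)$ associated to the representation $(\g^*,\ad^*,\psi^*)$ coincides with the notion of the first part, and the second equivalence follows at once.

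The only real computation is the operator-to-tensor translation of~\meqref{eq:4.17} in the second step, and the one point to watch is the bookkeeping of the pairing conventions, so that $\phi$ ends up in the second tensor slot and $\psi$ in the first, producing Eq.~\meqref{eq:4.11a} rather than its transpose. Granting the classical skew-symmetric $r$-matrix/$\calo$-operator correspondence, the rest is routine.
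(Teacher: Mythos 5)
Your proof is correct and follows essentially the same route as the paper: invoking the classical Kupershmidt correspondence for the CYBE/$\calo$-operator part, then translating $\phi r^\sharp=r^\sharp\psi^*$ into tensor form via the pairing, and finally observing that dual representability is by definition the statement that $(\g^*,\ad^*,\psi^*)$ is a representation of $(\g,\phi)$. If anything you are a bit more careful than the paper, whose computation actually produces Eq.~\meqref{eq:4.11a} though it cites Eq.~\meqref{eq:4.11} (harmless, since skew-symmetry makes them equivalent, as you note), and which leaves the second equivalence implicit.
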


\begin{proof}
 By \mcite{Ku}, $r$ is a solution of the \cybe in the Lie algebra $\g$ if and only if
\begin{equation}
    [r^\sharp(a^*),r^\sharp(b^*)]=r^\sharp({\ad}^*(r^\sharp(a^*))b^*-\ad^*(r^\sharp(b^*))a^*), \quad \forall a^*, b^*\in \g^*,
    \mlabel{eq:4.161}
\end{equation}
that is, $r^\sharp:\g^*\mto \g$ is an $\mathcal O$-operator on
$\g$ associated to the  representation $(\g^*,\ad^*)$.

Moreover, let
 $r=\sum_ia_i\otimes b_i$ and for any $a^*\in \g^*$, we have
 $$
 r^\sharp(\psi^*(a^*))\stackrel{}{=}\sum\limits_{i=1}^{n}\langle \psi^*(a^*), a_i \rangle b_i= \sum\limits_{i=1}^{n}\langle a^*, \psi(a_i) \rangle b_i, \quad
 \phi(r^\sharp(a^*))\stackrel{}{=}\sum\limits_{i=1}^{n}\langle a^*, a_i \rangle \phi(b_i).
 $$
 So, $\phi r^\sharp=r^\sharp   \psi^*$ if and only if Eq.~\meqref{eq:4.11} holds. This completes the proof.
\end{proof}

We next show that the notion of $\calo$-operators for endo Lie
algebras naturally gives the following notion of morphisms of
$\calo$-operators for Lie algebras.
\begin{defi}
    Let $T_\g$ and $T_\h$ be $\mathcal O$-operators on Lie algebras $\g$ and $\h$ associated to representations $(V_\g,\rho_\g)$ and $(V_\h,\rho_\h)$
    respectively. A {\bf (\weak) homomorphism of $\calo$-operators} from $T_\g$ to $T_\h$ consists
    of a
    Lie algebra homomorphism  $\phi:\g\mto\h$ and a linear map $\alpha:V_\g\mto V_\h$ such that for all $x\in\g, v\in V_\g$,
    \begin{eqnarray}
        \alpha\rho_\g(x)(v)&=&\rho_\h(\phi(x))(\alpha(v)),\mlabel{defi:isocon2b}\\
        T_\h\alpha &=&\phi T_\g.\mlabel{defi:isocon1b}
    \end{eqnarray}
    In particular, if $\phi$ and $\alpha$ are  invertible,  then $(\phi,\alpha)$ is called an  {\bf isomorphism}  from $T_\g$ to
    $T_\h$. Let ${\bf OP}$ denote the category of
    $\mathcal O$-operators with the morphisms thus defined.
    \mlabel{defi:isoOb}
\end{defi}

Indeed, we immediately have
\begin{cor}
Let $(\g, \phi)$ be an endo Lie algebra. Let $(V,\rho)$ be a
representation of the Lie algebra $\g$ and $\alpha:\g\rightarrow
\g$ be a linear map. Then  $(V,\rho,\alpha)$ is a representation
of $(\g,\phi)$ and $T$ is an  $\mathcal{O}$-operator on
$(\g,\phi)$ associated to
 $(V, \rho,\alpha)$ if and only if $T$ is an $\mathcal O$-operator on the Lie algebra $\g$ associated to
the representation $(V,\rho)$ and
  $(\phi,\alpha)$ is an
 endomorphism on the $\mathcal O$-operator $T$. \mlabel{cor:homO}
\end{cor}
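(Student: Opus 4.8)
The plan is to prove the equivalence purely by unwinding both sides into the same system of defining equations, so the argument is bookkeeping rather than computation.

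First I would expand the left-hand side. By Eq.~\eqref{eq:repn}, asserting that $(V,\rho,\alpha)$ is a representation of $(\g,\phi)$ is exactly the condition $\alpha(\rho(x)(v))=\rho(\phi(x))(\alpha(v))$ for all $x\in\g$, $v\in V$. Granting this, Definition~\ref{de:4.17} says that $T$ being an $\mathcal{O}$-operator on $(\g,\phi)$ associated to $(V,\rho,\alpha)$ is precisely the conjunction of the $\mathcal{O}$-operator identity Eq.~\eqref{eq:4.18} and the commutation relation Eq.~\eqref{eq:4.17}, $\phi T=T\alpha$. Thus the left-hand side unpacks to the three equations \eqref{eq:repn}, \eqref{eq:4.18}, and \eqref{eq:4.17}.

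Next I would expand the right-hand side. The statement that $T$ is an $\mathcal{O}$-operator on the Lie algebra $\g$ associated to $(V,\rho)$ is just Eq.~\eqref{eq:4.18}. Specializing Definition~\ref{defi:isoOb} to the endomorphism case $\g=\h$ and $T_\g=T_\h=T$, the assertion that $(\phi,\alpha)$ is an endomorphism of $T$ consists of Eq.~\eqref{defi:isocon2b}, $\alpha\rho(x)(v)=\rho(\phi(x))(\alpha(v))$, together with Eq.~\eqref{defi:isocon1b}, $T\alpha=\phi T$; the requirement that $\phi$ be a Lie algebra homomorphism is automatic since $(\g,\phi)$ is an endo Lie algebra. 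So the right-hand side unpacks to the three equations \eqref{eq:4.18}, \eqref{defi:isocon2b}, and \eqref{defi:isocon1b}.

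Finally I would note the termwise identification: Eq.~\eqref{defi:isocon2b} coincides with Eq.~\eqref{eq:repn} and Eq.~\eqref{defi:isocon1b} coincides with Eq.~\eqref{eq:4.17}, while Eq.~\eqref{eq:4.18} appears verbatim on both sides. Hence both sides are equivalent to the identical system of three conditions, which proves the corollary. I do not anticipate any genuine obstacle: the whole content is that the endo-Lie-algebra packaging of the data (a representation of $(\g,\phi)$ plus a compatible $\mathcal{O}$-operator) and the morphism packaging (an $\mathcal{O}$-operator on $\g$ plus an endomorphism of it) encode the same three equations, which is exactly why the result can be flagged as immediate.
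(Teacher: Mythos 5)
Your proposal is correct and is exactly the argument the paper intends: the paper offers no written proof (it states the corollary as immediate from Definitions~\ref{de:4.17} and~\ref{defi:isoOb}), and your unpacking of both sides into the identical system of three conditions --- the representation/compatibility equation, the $\mathcal{O}$-operator identity, and $\phi T=T\alpha$ --- is precisely the implicit bookkeeping, including the correct observation that the Lie algebra homomorphism requirement on $\phi$ is automatic for an endo Lie algebra.
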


We now show that the notion of \weak homomorphisms of
$\calo$-operators is the correct one to be compatible with
classical $r$-matrices. 

\begin{thm} \mlabel{pp:oprm}
Let $r_\g,\;r_\h$ be skew-symmetric classical $r$-matrices in
Lie algebras $\g$ and $\h$ respectively. Let $\phi:\g\mto \h$ be a
Lie algebra homomorphism and $\psi:\h\mto \g$ be a linear map. Then
$(\phi, \psi)$ is a \weak homomorphism of classical $r$-matrices from $r_\g$
to $r_\h$ if and only if $(\phi,\psi^*)$ is a homomorphism between
the corresponding $\calo$-operators $r_\g^\sharp$ and
$r_\h^\sharp$.

This correspondence defines an equivalence from the category ${\bf
SCr}$ of skew-symmetric classical $r$-matrices, as a full
sub-category of ${\bf Cr}$ of classical $r$-matrices satisfying
Eq.~\meqref{eq:4.111}, to the category ${\bf SOP}_{\rm coad}$ of
$\mathcal O$-operators on Lie algebras associated to
      the coadjoint
      representations satisfying Eq.~\meqref{eq:skew-symmetry},
as a full subcategory of ${\bf OP}$ of $\mathcal
      O$-operators.
\end{thm}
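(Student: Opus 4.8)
The plan is to prove the first (pointwise) assertion by translating, one by one, the three defining equations of a \weak homomorphism of classical $r$-matrices into the two defining equations of a homomorphism of the associated $\calo$-operators, and then to upgrade this into an equivalence of categories by checking that the assignment $r\mapsto r^\sharp$, $(\phi,\psi)\mapsto(\phi,\psi^*)$ is a fully faithful and essentially surjective functor. Here the two $\calo$-operators $r_\g^\sharp$ and $r_\h^\sharp$ are, by the first part of Proposition~\mref{ex:4.20}, associated to the coadjoint representations $(\g^*,\ad^*)$ and $(\h^*,\ad^*)$, and the candidate morphism is $(\phi,\alpha)$ with $\alpha=\psi^*:\g^*\mto\h^*$.

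The key bookkeeping device is the $\sharp$ isomorphism $\g\ot\h\cong\Hom(\g^*,\h)$. Writing $r_\g=\sum_i a_i\ot b_i$ and $r_\h=\sum_j c_j\ot d_j$, I would compute directly from Eq.~\meqref{eq:4.12} that for $a^*\in\g^*$,
\[
\phi\, r_\g^\sharp(a^*)=\sum_i\langle a^*,a_i\rangle\,\phi(b_i),\qquad
r_\h^\sharp\,\psi^*(a^*)=\sum_j\langle a^*,\psi(c_j)\rangle\, d_j,
\]
which are precisely the images under $\sharp$ of $(\id_\g\ot\phi)(r_\g)$ and of $(\psi\ot\id_\h)(r_\h)$. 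Since $\sharp$ is injective, the $\calo$-operator equation \meqref{defi:isocon1b}, namely $r_\h^\sharp\psi^*=\phi\, r_\g^\sharp$, is therefore equivalent to Eq.~\meqref{eq:reqqq1}. For the representation condition \meqref{defi:isocon2b} with $\alpha=\psi^*$ and $\rho=\ad^*$, I would pair the identity $\psi^*\ad_\g^*(x)(a^*)=\ad_\h^*(\phi(x))\psi^*(a^*)$ against an arbitrary $y\in\h$ and use $\ad^*(x)=-\ad(x)^*$ together with nondegeneracy of the pairing; this yields $[x,\psi(y)]_\g=\psi([\phi(x),y]_\h)$, i.e.\ Eq.~\meqref{eq:reqqq3}.

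It remains to account for Eq.~\meqref{eq:reqqq2}, and this is where the skew-symmetry hypotheses enter. Applying the flip to \meqref{eq:reqqq1} and using $\tau(r_\g)=-r_\g$ and $\tau(r_\h)=-r_\h$ converts \meqref{eq:reqqq1} into \meqref{eq:reqqq2}, so under skew-symmetry the two are equivalent (as already observed in the excerpt). Assembling the three translations gives the chain: $(\phi,\psi^*)$ is a homomorphism of the $\calo$-operators $r_\g^\sharp, r_\h^\sharp$ iff Eqs.~\meqref{eq:reqqq1} and \meqref{eq:reqqq3} hold iff all of \meqref{eq:reqqq1}--\meqref{eq:reqqq3} hold iff $(\phi,\psi)$ is a \weak homomorphism of the $r$-matrices. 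This proves the first assertion.

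For the equivalence of categories, the object correspondence $r\mapsto r^\sharp$ is exactly Proposition~\mref{ex:4.20}: a skew-symmetric classical $r$-matrix $r$ in $\g$ corresponds to an $\calo$-operator $r^\sharp:\g^*\mto\g$ on $(\g^*,\ad^*)$, with skew-symmetry of $r$ being precisely condition \meqref{eq:skew-symmetry}; since $\sharp$ is an isomorphism and the \cybe\ condition matches the $\calo$-operator condition~\meqref{eq:4.161} under it, this assignment is bijective on objects, giving essential surjectivity. On morphisms, $(\phi,\psi)\mapsto(\phi,\psi^*)$ is injective and, by the first assertion combined with the fact that in finite dimensions $\psi\mapsto\psi^*$ is a bijection (with inverse $\alpha\mapsto\alpha^*$ under $\g^{**}\cong\g$), restricts to a bijection of the respective Hom-sets, so the functor is fully faithful; functoriality is immediate from $(\psi_1\psi_2)^*=\psi_2^*\psi_1^*$. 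The main obstacle is not any single computation but the careful matching of the linear-algebra identifications — the $\sharp$ isomorphism $\g\ot\h\cong\Hom(\g^*,\h)$, the transpose conventions, and the double-dual identification used to pass between $\psi$ and $\psi^*$ — so that each $r$-matrix equation lands on the intended $\calo$-operator equation; once these are fixed, every step is a short direct computation, and skew-symmetry is exactly what renders \meqref{eq:reqqq2} redundant.
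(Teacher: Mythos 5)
Your proposal is correct and follows essentially the same route as the paper: both translate the two defining equations of an $\calo$-operator homomorphism (Eqs.~\meqref{defi:isocon2b}--\meqref{defi:isocon1b}, with $\rho=\ad^*$ and $\alpha=\psi^*$) into Eqs.~\meqref{eq:reqqq1}--\meqref{eq:reqqq3} via the nondegenerate pairing and the $\sharp$ identification, with skew-symmetry of $r_\g,r_\h$ serving exactly to make Eq.~\meqref{eq:reqqq2} equivalent to Eq.~\meqref{eq:reqqq1}. The only cosmetic differences are that you obtain Eq.~\meqref{eq:reqqq1} first (without skew-symmetry) and then flip, whereas the paper's pairing computation lands on Eq.~\meqref{eq:reqqq2} directly, and that you spell out the functoriality, full faithfulness and object bijection that the paper leaves as immediate.
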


\begin{proof}
We only need to prove the first conclusion. Let $x\in \g, y\in \h$ and $a^*\in \g^*$. Then
\begin{eqnarray*}
\langle \psi^*\ad_\g^*(x)a^*, y\rangle&=&\langle
\ad_\g^*(x)a^*,\psi(y)\rangle=-\langle
a^*,[x,\psi(y)]_\g\rangle,\\
\langle \ad_\h^*(\phi(x))(\psi^*(a^*)),y\rangle&=&-\langle
\psi^*(a^*),[\phi(x),y]_\h\rangle=-\langle
a^*,\psi[\phi(x),y]_\h\rangle.
\end{eqnarray*}
Hence $\psi^*\ad_\g^*(x)a^*=\ad_\h^*(\phi(x))(\psi^*(a^*))$ if and
only if $[x,\psi(y)]_\g$=$\psi[\phi(x),y]_\h$.

Let $a^*\in\g^*,b^*\in \h^*$. Then we have
\begin{eqnarray*}
\langle r_\h^\sharp\psi^*(a^*),b^*\rangle&=&\langle r_\h,
\psi^*(a^*)\otimes b^*\rangle=\langle r_\h,-b^*\otimes
\psi^*(a^*)\rangle
=\langle (\id_\h\otimes \psi)(r_\h),-b^*\otimes a^*\rangle,\\
\langle \phi r_\g^\sharp(a^*),b^*\rangle&=&\langle r_\g,
a^*\otimes \phi^*(b^*)\rangle=\langle r_\g,-\phi^*(b^*)\otimes
a^*\rangle=\langle (\phi\otimes \id_\g)(r_\g),-b^*\otimes
a^*\rangle.
\end{eqnarray*}
Hence $r_\h^\sharp\psi^*(a^*)=\phi r_\g^\sharp(a^*)$ if and only
if $(\psi\otimes \id_\h)(r_\h)=(\id_\g\otimes \phi)(r_\g)$, and if
and only if  $(\id_\h\otimes \psi)(r_\h)=(\phi\otimes
\id_\g)(r_\g)$. Therefore the conclusion follows.
\end{proof}

\begin{rmk}
When $\g=\h$, the above conclusion is exactly
~\cite[Proposition~7.10]{TBGS}.
\end{rmk}

\subsection{From $\mathcal O$-operators to classical $r$-matrices}
\mlabel{ss:oor}
In Section~\mref{ss:roo}, we see that a  skew-symmetric  solution of the CYBE gives an $\calo$-operator associated  to the adjoint representation. Going in the opposite direction, an $\mathcal O$-operator gives rises to a solution of
the CYBE in the semi-direct product Lie algebra  as follows.

\begin{lem}\label{lem:rt}{\rm (\cite{Bai2})}
Let $\g$ be a Lie algebra and $(V,\rho)$ be a representation. Let
$T: V\mto \g$ be a linear map which is identified as an element in
$(\g\ltimes_{\rho^*} V^*)\otimes (\g\ltimes_{\rho^*} V^*)$
 $($through ${\rm Hom}(V,\g)\cong V^*\otimes \g\subseteq (\g\ltimes_{\rho^*} V^*)\otimes (\g\ltimes_{\rho^*} V^*)$$)$.
Then
\begin{equation}
r_{\scriptscriptstyle{T}}:=T-\sigma(T)
\mlabel{eq:oor}
\end{equation}
is a skew-symmetric classical $r$-matrix
in the Lie algebra $\g\ltimes_{\rho^*} V^*$ if and only if $T$
is an $\mathcal O$-operator on $\g$ associated to $(V,\rho)$.
\end{lem}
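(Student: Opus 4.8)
The skew-symmetry of $r_T$ is immediate. Fixing a basis $\{e_i\}$ of $V$ with dual basis $\{e^i\}$, the identification of $T$ with an element of $V^*\ot\g\subseteq(\g\ltimes_{\rho^*}V^*)^{\ot 2}$ reads $T=\sum_i e^i\ot T(e_i)$, so that $r_T=T-\sigma(T)=\sum_i\big(e^i\ot T(e_i)-T(e_i)\ot e^i\big)$, with $\sigma$ the transposition of the two tensor factors, is antisymmetric by construction. Hence the content of the lemma is that $r_T$ solves the CYBE in $L:=\g\ltimes_{\rho^*}V^*$ if and only if $T$ satisfies Eq.~\meqref{eq:4.18}, i.e. $T$ is an $\calo$-operator associated to $(V,\rho)$.

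The plan is to compute the CYBE element $C:=[r_{12},r_{13}]+[r_{13},r_{23}]+[r_{12},r_{23}]\in L^{\ot 3}$, where $r:=r_T$, directly from the bracket of $L$, which obeys three rules: $[x,y]_L=[x,y]_\g$, $[x,a^*]_L=\rho^*(x)a^*$ and $[a^*,b^*]_L=0$ for $x,y\in\g$ and $a^*,b^*\in V^*$. The last rule, that $V^*$ is an abelian ideal, is what makes the calculation manageable. Grade $L$ by placing $\g$ in degree $0$ and $V^*$ in degree $1$; the bracket has degree $0$, and $[V^*,V^*]=0$ caps the available degree at $2$. Since each term of $r$ has total degree $1$, every surviving term of $C$ has total degree exactly $2$, so $C$ lies in $\g\ot V^*\ot V^*\oplus V^*\ot\g\ot V^*\oplus V^*\ot V^*\ot\g$ and all terms requiring a bracket of two elements of $V^*$ drop out, eliminating the bulk of the cross terms.

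Because $r$ is skew-symmetric, $C$ inherits the cyclic symmetry of the CYBE expression, so it suffices to analyze one graded component, say the one in $\g\ot V^*\ot V^*$; the remaining two vanish exactly when it does. I would extract this component by pairing the two $V^*$-slots against arbitrary $u,v\in V$. The terms in which both bracketed factors lie in $\g$ assemble, after pairing, into $[T(u),T(v)]_\g$, while the mixed terms, in which one bracketed factor is some $T(e_i)\in\g$ and the other a dual basis vector, produce the image under $T$ of a $\rho^*$-action; pairing these through the fundamental identity $\langle\rho^*(x)a^*,v\rangle=-\langle a^*,\rho(x)v\rangle$ turns them into $-T\big(\rho(T(u))v-\rho(T(v))u\big)$. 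Collecting, the $\g\ot V^*\ot V^*$ component of $C$, evaluated against $u\ot v$, equals up to an overall sign the $\calo$-operator obstruction $[T(u),T(v)]_\g-T\big(\rho(T(u))v-\rho(T(v))u\big)$, which is skew in $u,v$; its vanishing for all $u,v\in V$ is precisely Eq.~\meqref{eq:4.18}.

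The main obstacle is purely the sign bookkeeping. One must track consistently that $\rho^*(x)=-\rho(x)^*$ introduces a sign in every mixed bracket $[T(e_i),e^j]_L=\rho^*(T(e_i))e^j$, and that a second sign enters when this is paired back against a test vector via $\langle\rho^*(x)a^*,v\rangle=-\langle a^*,\rho(x)v\rangle$. The delicate point is to verify that the family of terms whose surviving bracket lands in $\g$ and the family whose bracket lands in $V^*$ combine with exactly the coefficients needed to reproduce $\rho(T(u))v-\rho(T(v))u$, rather than a partial or rescaled version; the grading reduction and the cyclic symmetry are what keep this verification from becoming unwieldy.
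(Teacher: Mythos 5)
Your proposal is correct. A remark first: the paper does not supply its own proof of Lemma~\mref{lem:rt} --- it is quoted from \mcite{Bai2} --- and the proof there, like the closely related computation the paper does carry out for Theorem~\mref{cor:maincon}, is precisely the direct basis expansion of the CYBE tensor in $\g\ltimes_{\rho^*}V^*$ that you outline; so your route is essentially the standard one, organized more efficiently. Both of your organizing devices check out: the bracket of $\g\ltimes_{\rho^*}V^*$ preserves the grading ($\g$ in degree $0$, $V^*$ in degree $1$), so the CYBE tensor of the degree-one element $r_{\scriptscriptstyle T}$ lies in $\g\ot V^*\ot V^*\oplus V^*\ot \g\ot V^*\oplus V^*\ot V^*\ot\g$; and for skew-symmetric $r$ one indeed has $\sigma(C)=C$ for $C=[r_{12},r_{13}]+[r_{13},r_{23}]+[r_{12},r_{23}]$ (for instance $\sigma([r_{12},r_{13}])=[r_{12},r_{23}]$ after one use of $r=-\tau(r)$ and a relabeling of indices), and since $\sigma$ cyclically permutes the three graded components, the vanishing of one is equivalent to the vanishing of all. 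Finally, the contraction of the $\g\ot V^*\ot V^*$ component against $u\ot v$ comes out exactly as you claim: the pure-$\g$ brackets from $[r_{12},r_{13}]$ contribute $[T(u),T(v)]$, while the mixed terms from $[r_{13},r_{23}]$ and $[r_{12},r_{23}]$ contribute $-T(\rho(T(u))v)$ and $+T(\rho(T(v))u)$ respectively via $\langle\rho^*(x)a^*,v\rangle=-\langle a^*,\rho(x)v\rangle$, so that component vanishes for all $u,v\in V$ (using nondegeneracy of the pairing and finite dimensionality) if and only if Eq.~\meqref{eq:4.18} holds, which closes the equivalence in both directions.
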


We now lift Lemma~\ref{lem:rt} to the level of morphisms, that is, to use a \weak homomorphism of $\calo$-operators on Lie algebras to induce a \weak homomorphism of classical $r$-matrices in the corresponding semi-direct product Lie algebras. Since the latter Lie algebras are much larger, extra restraints are needed to give a well-defined correspondence.

\begin{thm}\mlabel{cor:maincon}
Let $\g$ and $\h$ be Lie algebras, and $(V_\g,\rho_\g),
(V_\h,\rho_\h)$ be representations of $\g$ and $\h$ respectively.
Let $T_\g: V_\g\to \g$ and $T_\h:V_\h\to \h$ be $\calo$-operators, and $r_{\scriptscriptstyle{T_\g}}$ and $r_{\scriptscriptstyle{T_\h}}$ be the corresponding skew-symmetric classical $r$-matrices defined in Lemma~\mref{lem:rt}.
Let $(\phi,\alpha)$ be a homomorphism of $\mathcal
O$-operators from $T_\g$  to $T_\h$. Then for linear maps $\psi:\h\to \g$ and $\beta:V_\h\to V_\g$, the pair $(\phi+ \beta^*,\psi+\alpha^*)$ is a \weak
homomorphism from $r_{\scriptscriptstyle{T_\g}}$ to $r_{\scriptscriptstyle{T_\h}}$
if and only if Eq.~\meqref{eq:reqqq3} and the following equations hold
\begin{eqnarray}
	\label{eq:ttt}T_\g\beta&=&\psi T_\h,\\
	\label{eq:hh1}
	\beta(\rho_\h(\phi(x))b)&=&\rho_\g(x)(\beta(b)),\;\;\forall x\in \g,b\in V_\h,\\
	\beta(\rho_\h(y)\alpha(a))&=&\rho_\g(\psi(y))a,\;\;\forall y\in
	\h,a\in V_\g.\label{eq:hh2}
\end{eqnarray}
\end{thm}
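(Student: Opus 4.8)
The plan is to unravel the defining conditions of a \weak homomorphism of the skew-symmetric classical $r$-matrices $r_{\scriptscriptstyle{T_\g}}$ and $r_{\scriptscriptstyle{T_\h}}$ (Definition~\mref{defi:isorr}) along the direct sum decompositions of the semi-direct product Lie algebras $\g\ltimes_{\rho_\g^*}V_\g^*$ and $\h\ltimes_{\rho_\h^*}V_\h^*$, translating the resulting tensor identities into identities of linear maps through the isomorphisms $\Hom(V,W)\cong V^*\ot W$. First I would record that, by Definition~\mref{defi:isorr}, the pair $(\phi+\beta^*,\psi+\alpha^*)$ is a \weak homomorphism from $r_{\scriptscriptstyle{T_\g}}$ to $r_{\scriptscriptstyle{T_\h}}$ exactly when $\Phi:=\phi+\beta^*:\g\ltimes_{\rho_\g^*}V_\g^*\mto\h\ltimes_{\rho_\h^*}V_\h^*$ is a homomorphism of Lie algebras, $\Psi:=\psi+\alpha^*$ is a linear map, and Eqs.~\meqref{eq:reqqq1}--\meqref{eq:reqqq3} hold with $\Phi,\Psi,r_{\scriptscriptstyle{T_\g}},r_{\scriptscriptstyle{T_\h}}$ in place of $\phi,\psi,r_\g,r_\h$. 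Writing $r_{\scriptscriptstyle{T_\g}}=T_\g-\sigma(T_\g)$ with $T_\g=\sum_i f_i\ot e_i\in V_\g^*\ot\g$ as in Eq.~\meqref{eq:oor}, and similarly over $\h$, each condition splits into components lying in the summands $\g\ot\h$, $\g\ot V_\h^*$, $V_\g^*\ot\h$, $V_\g^*\ot V_\h^*$ (and their transposes), which I would match one summand at a time.

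I would treat the homomorphism condition on $\Phi$ first. Expanding $\Phi([X_1,X_2])=[\Phi(X_1),\Phi(X_2)]$ on $\g\oplus V_\g^*$ via the semi-direct product bracket Eq.~\meqref{eq:semipro}, the $\h$-component holds automatically because $\phi$ is a Lie algebra homomorphism, while the $V_\h^*$-component reduces to $\beta^*\rho_\g^*(x)=\rho_\h^*(\phi(x))\beta^*$ for all $x\in\g$; using $\rho^*(x)=-\rho(x)^*$ and taking transposes, this is precisely Eq.~\meqref{eq:hh1}. Next, for Eqs.~\meqref{eq:reqqq1} and~\meqref{eq:reqqq2}, I would apply $\Psi\ot\id$, $\id\ot\Phi$, etc.\ slot by slot to $r_{\scriptscriptstyle{T_\h}}$ and $r_{\scriptscriptstyle{T_\g}}$. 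The component in $V_\g^*\ot\h$ (respectively $\h\ot V_\g^*$), read through $\Hom(V_\g,\h)\cong V_\g^*\ot\h$, yields $T_\h\alpha=\phi T_\g$, which holds by the standing hypothesis that $(\phi,\alpha)$ is a homomorphism of $\calo$-operators, namely Eq.~\meqref{defi:isocon1b}; the component in $\g\ot V_\h^*$ (respectively $V_\h^*\ot\g$), read through $\Hom(V_\h,\g)\cong\g\ot V_\h^*$, yields $\psi T_\h=T_\g\beta$, which is Eq.~\meqref{eq:ttt}.

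Finally I would expand Eq.~\meqref{eq:reqqq3}, that is $\Psi[\Phi(X),Y]=[X,\Psi(Y)]$ for $X=x+a^*\in\g\oplus V_\g^*$ and $Y=y+b^*\in\h\oplus V_\h^*$. Its $\g$-component is exactly the stated Eq.~\meqref{eq:reqqq3}. Its $V_\g^*$-component is bilinear in the independent pairs $(x,b^*)$ and $(y,a^*)$, so it decouples into two identities: the $(x,b^*)$-part reads $\alpha^*\rho_\h^*(\phi(x))=\rho_\g^*(x)\alpha^*$, which dualizes to the $\calo$-operator compatibility Eq.~\meqref{defi:isocon2b} and hence holds by hypothesis; the $(y,a^*)$-part reads $\alpha^*\rho_\h^*(y)\beta^*=\rho_\g^*(\psi(y))$, which dualizes, again via $\rho^*=-\rho(\cdot)^*$, to $\beta\rho_\h(y)\alpha=\rho_\g(\psi(y))$, namely Eq.~\meqref{eq:hh2}. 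Collecting the outcomes, all the \weak homomorphism conditions for $(\phi+\beta^*,\psi+\alpha^*)$ reduce, modulo the standing hypothesis, exactly to Eqs.~\meqref{eq:reqqq3}, \meqref{eq:ttt}, \meqref{eq:hh1} and~\meqref{eq:hh2}, which gives both implications at once.

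The step I expect to demand the most care is the bookkeeping of the block and tensor decompositions together with the repeated dualizations: one must consistently invoke $\rho^*(x)=-\rho(x)^*$ and the identification $\Hom(V,W)\cong V^*\ot W$ to convert each tensor-valued equation into the correct map equation, and check that precisely the relations $T_\h\alpha=\phi T_\g$ and $\alpha\rho_\g(x)=\rho_\h(\phi(x))\alpha$ get absorbed by the hypothesis that $(\phi,\alpha)$ is a homomorphism of $\calo$-operators, leaving exactly the four listed conditions and nothing more.
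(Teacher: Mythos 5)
Your proposal is correct and follows essentially the same route as the paper's proof: both unravel Definition~\ref{defi:isorr} for the pair $(\phi+\beta^*,\psi+\alpha^*)$ on the semi-direct product Lie algebras, showing that the Lie algebra homomorphism condition on $\phi+\beta^*$ reduces to Eq.~\eqref{eq:hh1}, that the two tensor identities \eqref{eq:reqqq1}--\eqref{eq:reqqq2} reduce to Eqs.~\eqref{defi:isocon1b} and \eqref{eq:ttt}, and that the bracket identity reduces to Eqs.~\eqref{eq:reqqq3}, \eqref{defi:isocon2b} and \eqref{eq:hh2}, after which Eqs.~\eqref{defi:isocon1b} and \eqref{defi:isocon2b} are absorbed by the hypothesis that $(\phi,\alpha)$ is a homomorphism of $\calo$-operators. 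The only difference is presentational: the paper performs the tensor comparison with explicit dual bases, whereas you read off the components through the identification $\Hom(V,W)\cong V^*\otimes W$, which amounts to the same computation.
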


\begin{proof}
It is straightforward to deduce that the linear map
$\phi+\beta^*:\g\ltimes_{\rho_\g^*}V_\g^*\rightarrow
\h\ltimes_{\rho_\h^*}V_\h^*$ is a homomorphism of Lie algebras if
and only if $\phi$ is a homomorphism of Lie algebras and   Eq.
\meqref{eq:hh1} holds.
Let $\{e_1, e_2,\cdots, e_n\}$ be a basis of $V_\g$ and $\{e^1,
e^2, \cdots, e^n\}$ be its dual basis. Let $\{f_1, f_2,\cdots,
f_m\}$ be a basis of $V_\h$ and $\{f^1, f^2,\cdots, f^m\}$ be its
dual basis. Then the $2$-tensors of the $\calo$-operators $T_\g$ and $T_\h$ are
$\sum_{i=1}^n T_\g(e_i)\otimes e^i$ and $\sum_{j=1}^m T_\h(f_j)\otimes f^j$ respectively.
Hence
\vspace{-.1cm}
$$r_{\scriptscriptstyle{T_\g}}=\sum_{i=1}^n (T_\g(e_i)\otimes e^i-e^i\otimes T_\g(e_i)),\;\;r_{\scriptscriptstyle{T_\h}}=\sum_{j=1}^m (T_\h(f_j)\otimes f^j-f^j\otimes T_\h(f_j)),$$
giving
\vspace{-.3cm}
{\small
\begin{eqnarray*}
((\phi+\beta^*)\otimes {\rm id}_{\g\ltimes_{\rho_\g^*} V_\g^*})(r_{\scriptscriptstyle{T_\g}})&=&\sum_{i=1}^n(\phi  T_\g(e_i)\otimes e^i-\beta^*(e^i)\otimes T_\g(e_i)),\\
({\rm id_{\h\ltimes_{\rho_\h^*} V_\h^*}}\otimes
(\psi+\alpha^*))(r_{\scriptscriptstyle{T_\h}})&=&\sum_{j=1}^m(T_\h(f_j)\otimes
\alpha^*(f^j)-f^j\otimes \psi  T_\h(f_j)).
\end{eqnarray*}
}
Further,
{\small
\begin{eqnarray*}
\sum_{i=1}^n \beta^*(e^i)\otimes T_\g(e_i)&=&\sum_{i=1}^n\sum_{j=1}^m\langle \beta^*(e^i),f_j\rangle f^j\otimes T_\g(e_i)= \sum_{j=1}^m f^j\otimes \sum_{i=1}^n\langle e^i,\beta(f_j)\rangle T_\g(e_i)\\
&=&\sum_{j=1}^m f^j\otimes T_\g(\sum_{i=1}^n\langle \beta(f_j),
e^i\rangle e_i)=\sum_{j=1}^m f^j\otimes T_\g  \beta(f_j),
\end{eqnarray*}
}
and similarly, $\sum_{j=1}^m T_\h(f_j)\otimes \alpha^*(f^j)=
\sum_{i=1}^n T_\h  \alpha (e_i)\otimes e^i$. Then we obtain
$$((\phi+\beta^*)\otimes {\rm id}_{\g\ltimes_{\rho_\g^*} V_\g^*})(r_{\scriptscriptstyle{T_\g}})=({\rm id_{\h\ltimes_{\rho_\h^*} V_\h^*}}\otimes
(\psi+\alpha^*))(r_{\scriptscriptstyle{T_\h}})$$
 if and only if Eqs. \eqref{defi:isocon1b} and \eqref{eq:ttt} hold. 
One similarly derives that
 $$( {\rm id}_{\g\ltimes_{\rho_\g^*} V_\g^*}\otimes(\phi+\beta^*))(r_{\scriptscriptstyle{T_\g}})=(
(\psi+\alpha^*)\otimes{\rm id_{\h\ltimes_{\rho_\h^*}
V_\h^*}})(r_{\scriptscriptstyle{T_\h}})$$
 if and only if Eqs. \eqref{defi:isocon1b} and \eqref{eq:ttt} hold.

 Finally, it is straightforward to deduce that  Eq.~\meqref{eq:reqqq3} holds (where $\g$ is replaced by $\g\ltimes_{\rho_\g^*}V_\g^*$, $\h$ by $\h\ltimes_{\rho_\h^*}V_\h^*$, $\phi$ by $\phi+\beta^*$ and
$\psi$ by $\psi+\alpha^*$) if and only if Eqs.~\meqref{eq:reqqq3},
\meqref{defi:isocon2b} and (\ref{eq:hh2}) hold. Therefore the
proof is completed.
\end{proof}

Applying Theorems~\mref{cor:maincon} and ~\ref{co:r12},  we obtain a large supply of \weak homomorphisms of Lie bialgebras.

\begin{cor} \mlabel{co:opliebial}
Under the assumption of Theorem~\mref{cor:maincon}, if the
linear maps $\phi,\alpha,\psi,\beta$ satisfy Eqs.~\meqref{eq:reqqq3} and \meqref{eq:ttt}--\meqref{eq:hh2}, then the pair $(\phi+ \beta^*,\psi+\alpha^*)$ is
a \weak homomorphism between the Lie bialgebras
    $(\g\ltimes_{\rho_\g^*} V_\g^*,\delta_{r_{\scriptscriptstyle{T_\g}}})$ and
    $(\h\ltimes_{\rho_\h^*} V_\h^*,\delta_{r_{\scriptscriptstyle{T_\h}}})$.
\end{cor}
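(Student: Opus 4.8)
The plan is to obtain this corollary as the composition of the two functorial statements already in hand, Theorem~\mref{cor:maincon} followed by Theorem~\mref{co:r12}; no new computation is required, only a verification that the intermediate hypotheses line up.

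First I would apply Theorem~\mref{cor:maincon} under its standing assumptions, namely that $(\phi,\alpha)$ is a homomorphism of the $\calo$-operators $T_\g$ and $T_\h$ and that $r_{\scriptscriptstyle{T_\g}}$ and $r_{\scriptscriptstyle{T_\h}}$ are the associated skew-symmetric classical $r$-matrices of Lemma~\mref{lem:rt}. That theorem asserts that, for linear maps $\psi:\h\to\g$ and $\beta:V_\h\to V_\g$, the pair $(\phi+\beta^*,\psi+\alpha^*)$ is a \weak homomorphism of classical $r$-matrices from $r_{\scriptscriptstyle{T_\g}}$ to $r_{\scriptscriptstyle{T_\h}}$ precisely when Eqs.~\meqref{eq:reqqq3} and \meqref{eq:ttt}--\meqref{eq:hh2} hold. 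Since these are exactly the hypotheses of the present corollary, the first step reads off that $(\phi+\beta^*,\psi+\alpha^*)$ is such a \weak homomorphism of classical $r$-matrices, now living in the semi-direct product Lie algebras $\g\ltimes_{\rho_\g^*}V_\g^*$ and $\h\ltimes_{\rho_\h^*}V_\h^*$.

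Next I would check the single hypothesis needed to invoke Theorem~\mref{co:r12}, namely that both $r$-matrices satisfy Eq.~\meqref{eq:4.111}. This is automatic: by construction in Lemma~\mref{lem:rt} the tensors $r_{\scriptscriptstyle{T_\g}}$ and $r_{\scriptscriptstyle{T_\h}}$ are skew-symmetric, so $r+\tau(r)=0$ and the left-hand side of Eq.~\meqref{eq:4.111} vanishes for each. With this in place, Theorem~\mref{co:r12} promotes the \weak homomorphism of classical $r$-matrices to a \weak homomorphism of the induced (triangular) Lie bialgebras $(\g\ltimes_{\rho_\g^*}V_\g^*,\delta_{r_{\scriptscriptstyle{T_\g}}})$ and $(\h\ltimes_{\rho_\h^*}V_\h^*,\delta_{r_{\scriptscriptstyle{T_\h}}})$, which is exactly the assertion.

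Because both ingredients are already established, there is no genuine obstacle; the only care needed is bookkeeping of directions. One must confirm that $\phi+\beta^*$ (mapping $\g\ltimes_{\rho_\g^*}V_\g^*\to\h\ltimes_{\rho_\h^*}V_\h^*$) consistently serves as the Lie-algebra-homomorphism component and $\psi+\alpha^*$ (mapping the other way) as the Lie-coalgebra-homomorphism component across Definitions~\mref{defi:isorr} and~\mref{de:liebialghom}, so that the source--target conventions of the two invoked theorems agree and the resulting \weak homomorphism indeed goes between the two stated Lie bialgebras.
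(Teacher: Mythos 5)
Your proposal is correct and matches the paper's own reasoning: the paper obtains this corollary precisely by composing Theorem~\mref{cor:maincon} (which gives the \weak homomorphism of classical $r$-matrices under the stated hypotheses) with Theorem~\mref{co:r12} (applicable because the $r$-matrices from Lemma~\mref{lem:rt} are skew-symmetric, so Eq.~\meqref{eq:4.111} holds trivially). Your added bookkeeping of the source--target directions of $\phi+\beta^*$ and $\psi+\alpha^*$ is consistent with Definitions~\mref{defi:isorr} and~\mref{de:liebialghom}, so nothing is missing.
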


To obtain from Theorem~\mref{cor:maincon} a functor from the category of $\calo$-operators and the category of classical $r$-matrices, there needs to be a consistent choice of $\psi$ and $\beta$.

\begin{cor}\label{cor:zero}
Under the assumption of Theorem~\mref{cor:maincon}, the assignment of objects
$$ (T:V_\g\to \g) \mapsto r_{\scriptscriptstyle{T}},$$
and the assignment of morphisms
$$ \big((\phi,\alpha):T_\g\to T_\h\big) \mapsto \big((\phi,\alpha^*): r_{\scriptscriptstyle{T_\g}}\to r_{\scriptscriptstyle{T_\h}}\big)$$
define a functor from the category ${\bf OP}$ of $\mathcal O$-operators to the
category ${\bf SCr}$ of skew-symmetric classical $r$-matrices.
\end{cor}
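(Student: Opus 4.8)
The plan is to check the three functor axioms directly, drawing the object map from Lemma~\ref{lem:rt} and obtaining the morphism map as the specialization of Theorem~\ref{cor:maincon} to the consistent choice $\psi=0$ and $\beta=0$. On objects nothing beyond Lemma~\ref{lem:rt} is needed: for an $\calo$-operator $T:V_\g\to\g$ associated to $(V_\g,\rho_\g)$, the $2$-tensor $r_{\scriptscriptstyle{T}}=T-\sigma(T)$ is a skew-symmetric classical $r$-matrix in $\g\ltimes_{\rho_\g^*}V_\g^*$, hence an object of ${\bf SCr}$.

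For morphisms I would feed a given $\calo$-operator homomorphism $(\phi,\alpha):T_\g\to T_\h$ into Theorem~\ref{cor:maincon} with the choices $\psi=0:\h\to\g$ and $\beta=0:V_\h\to V_\g$. With these choices the pair $(\phi+\beta^*,\psi+\alpha^*)$ of that theorem reduces to $(\phi,\alpha^*)$, namely $\phi$ extended by zero on $V_\g^*$ and $\alpha^*$ extended by zero on $\h$. The point is that once $\psi$ and $\beta$ vanish, every hypothesis of Theorem~\ref{cor:maincon} --- Eq.~\eqref{eq:reqqq3} together with Eqs.~\eqref{eq:ttt}--\eqref{eq:hh2} --- degenerates to the trivial identity $0=0$, so the only substantive input is that $(\phi,\alpha)$ is already a homomorphism of $\calo$-operators in the sense of Definition~\ref{defi:isoOb}. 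Theorem~\ref{cor:maincon} then certifies that $(\phi,\alpha^*)$ is a \weak homomorphism of classical $r$-matrices from $r_{\scriptscriptstyle{T_\g}}$ to $r_{\scriptscriptstyle{T_\h}}$, that is, a morphism of ${\bf SCr}$.

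It remains to confirm functoriality, and here the consistent choice $\psi=\beta=0$ is what makes things cohere. For composition, given $(\phi_1,\alpha_1):T_\g\to T_\h$ and $(\phi_2,\alpha_2):T_\h\to T_{\mathfrak{k}}$ with ${\bf OP}$-composite $(\phi_2\phi_1,\alpha_2\alpha_1)$, I would use $(\alpha_2\alpha_1)^*=\alpha_1^*\alpha_2^*$ and the composition law of Definition~\ref{defi:isorr} --- forward (Lie algebra) parts compose covariantly, backward (linear) parts contravariantly --- to match the image $(\phi_2\phi_1,(\alpha_2\alpha_1)^*)$ with $(\phi_2,\alpha_2^*)\circ(\phi_1,\alpha_1^*)$. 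The decisive structural fact is that the forward maps have block form $\phi\oplus 0$ and the backward maps block form $0\oplus\alpha^*$, so the vanishing blocks keep the $V^*$-component of the forward maps from ever interacting with the $\g$-component of the backward maps, and the two composites assemble summand by summand. Preservation of identities is then the special case $(\phi,\alpha)=(\id_\g,\id_{V_\g})$, to be verified against the same block description. I expect the main obstacle to be organizational rather than computational: one must track precisely how $\phi$, $\alpha^*$, and their zero extensions sit inside $\End(\g\ltimes_{\rho_\g^*}V_\g^*)$ and interact with the tensorial equalities $(\,\cdot\otimes\id)(r)$ and $(\id\otimes\,\cdot)(r)$ of Eqs.~\eqref{eq:reqqq1}--\eqref{eq:reqqq2}. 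Since Theorem~\ref{cor:maincon} already absorbs all the genuinely tensorial work, once $\psi=\beta=0$ is fixed the remainder is formal and the corollary follows.
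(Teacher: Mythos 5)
Your proposal is essentially the paper's own proof: the paper's entire argument consists of observing that the choice $\psi=0$, $\beta=0$ trivially satisfies Eq.~\eqref{eq:reqqq3} and Eqs.~\eqref{eq:ttt}--\eqref{eq:hh2}, so that Theorem~\ref{cor:maincon} certifies $(\phi+\beta^*,\psi+\alpha^*)=(\phi,\alpha^*)$ as a coherent homomorphism from $r_{\scriptscriptstyle{T_\g}}$ to $r_{\scriptscriptstyle{T_\h}}$. Your appeal to Lemma~\ref{lem:rt} on objects and your block-form verification of the functor axioms supply exactly the routine details that the paper compresses into ``Hence the conclusion holds.''
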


\begin{proof}
Under the assumption of Theorem~\mref{cor:maincon}, it is obvious
that $\beta=\psi=0$ satisfies Eqs.~\meqref{eq:reqqq3} and
\meqref{eq:ttt}--\meqref{eq:hh2}. Hence the
conclusion holds.
\end{proof}

The following two examples give other choices for $\beta$
and $\psi$ in Theorem~\mref{cor:maincon}.

\begin{ex}\label{co:con}
In Theorem~\mref{cor:maincon}, take $\g=\h, V_\g=V_\h=V$ and
$\rho_\g=\rho_\h=\rho$. Further take $\beta=\pm \alpha$ and
$\psi=\pm \phi$. According to Theorem~\mref{cor:maincon}, assume
that $T:V\rightarrow \g$ is  an $\mathcal O$-operator on $\g$
associated to $(V,\rho)$ satisfying $T\alpha=\phi T$ and the
following equations
\begin{eqnarray}
&& \alpha\rho(x)=\rho(\phi(x))\alpha, \quad \mlabel{eq:semi1} \rho(x)\alpha=\alpha\rho(\phi(x)),\\ 
&&
[\phi^2(x),\phi(y)]=[x,\phi(y)], \quad 
\alpha\rho(x)\alpha=\rho(\phi(x)), \quad \forall x\in
\g.\mlabel{eq:semi4}
\end{eqnarray}
Then $r_{\scriptscriptstyle{T}}=T-\sigma(T)$ is a skew-symmetric classical  $r$-matrix
in the Lie algebra $\g\ltimes_{\rho^*} V^*$ and $(\phi \pm
\alpha^*,\pm \phi+\alpha^*)$ is a \weak  endomorphism. Moreover,
there is a Lie bialgebra $(\g\ltimes_{\rho^*} V^*,\delta_{r_{\scriptscriptstyle{T}}})$
and $(\phi\pm\alpha^*,\pm\phi+\alpha^*)$ is a \weak  endomorphism.
\end{ex}
\vspace{-.2cm}
\begin{ex}
Let Lie algebras $\g, \h$, representations $(V_\g,\rho_\g),
(V_\h,\rho_\h)$, linear maps $\phi, \alpha$, $T_\g, T_\h$ be as in
Theorem~\mref{cor:maincon}. Further assume that
$(\phi,\alpha)$ is an isomorphism of
$\mathcal O$-operators from $T_\g$  to $T_\h$, that is, $\phi$ and
$\alpha$ are linear bijections. For $0\ne \theta\in K$, take
$\beta=\theta \alpha^{-1}$ and $\psi=\theta \phi^{-1}$. Then
Eq.~(\mref{eq:reqqq3}) holds automatically since it is equivalent
to the fact that $\phi$ is an isomorphism of Lie algebras from
$\g$ to $\h$. Also Eqs.~\meqref{eq:ttt}, \meqref{eq:hh1} and
\meqref{eq:hh2} hold since $(\phi,\alpha)$ is an isomorphism of
$\mathcal O$-operators from $T_\g$  to $T_\h$. Thus by Theorem~\ref{cor:maincon},
$(\phi+\theta{\alpha^{-1}}^*,\theta \phi^{-1}+\alpha^*)$ is a
\weak  isomorphism   between the skew-symmetric classical
$r$-matrices $r_{\scriptscriptstyle{T_\g}}$ and $r_{\scriptscriptstyle{T_\h}}$. Furthermore,
$(\phi+\theta{\alpha^{-1}}^*,\theta \phi^{-1}+\alpha^*)$ is a
\weak  isomorphism  between the Lie bialgebras
$(\g\ltimes_{\rho_\g^*} V_\g^*,\delta_{r_{\scriptscriptstyle{T_\g}}})$ and
$(\h\ltimes_{\rho_\h^*} V_\h^*,\delta_{r_{\scriptscriptstyle{T_\g}}})$  in
Corollary~\mref{co:opliebial}.
\end{ex}
\vspace{-.2cm}
\subsection{Functors among $\calo$-operators, pre-Lie algebras and Lie bialgebras} \mlabel{sec:dend}
Now we consider the category of pre-Lie algebras and obtain an
adjoint pair of functors from it to the category of $\calo$-operators.
\vspace{-.1cm}
\begin{defi}  Let $A$ be a vector space with a bilinear product denoted by $\cdot$.
Then $(A, \cdot)$ is called a {\bf pre-Lie algebra} if
\begin{equation}
x\cdot(y\cdot z)-(x \cdot y)\cdot z=y\cdot (x\cdot z)-(y\cdot
x)\cdot z,\;\;\forall x,y,z\in A. \mlabel{eq:PreLie}
\end{equation}
A {\bf homomorphism} $f$ from a pre-Lie algebra $(A,\cdot_A)$ to
$(B,\cdot_B)$ is defined as usual, that is, $f$ is a linear map
satisfying $f(x\cdot_A y)=f(x)\cdot_B f(y)$ for all $x,y\in A$.
Let  {\bf PL}  denote the category of pre-Lie algebras with the morphisms thus defined.
\end{defi}

For any $a$ in a pre-Lie algebra $A$, let $L(a)$  denote the left
multiplication operator.
 Furthermore, define the linear map
\vspace{-.1cm}
$$L: A\mto \End(A),~a\mapsto L(a),\;\;\forall a\in A.
 $$

As is well known, for a pre-Lie algebra $(A,\cdot)$, the
multiplication
\begin{equation}
[a,b]=a\cdot b-b\cdot a,\;\; \forall a,b\in
A\mlabel{eq:5.41}\end{equation} defines a Lie algebra
$(\g(A),[\;,\;])$, called the {\bf sub-adjacent Lie algebra} of
$(A,\cdot)$. Moreover, $(A,L)$ is a representation of the Lie
algebra  $(\g(A),[\;,\;])$~\mcite{Bai2}.

\begin{defi} An {\bf \plhp} is a triple $(A,\cdot, \phi)$ consisting of a pre-Lie algebra $(A,\cdot)$ and a pre-Lie algebra endomorphism $\phi$ on $A$.\mlabel{de:5.1}
 \end{defi}
\vspace{-.2cm}

 \begin{pro} Let $(A, \cdot, \phi)$ be an \plhp.
 Then $(\g(A), [\;,\;], \phi)$ is an endo Lie algebra, where $[\;,\;]$ is given by
 Eq.~\meqref{eq:5.41}. Moreover, $(A, L, \phi)$ is a representation of the endo Lie algebra
$(\g(A), [\;,\;], \phi)$. Conversely, let $(\g(A), [\;,\;], \phi)$
be an endo Lie algebra. Suppose that there is a bilinear product
denoted by $\cdot$ such that Eq.~\meqref{eq:5.41} holds and $(A,
L, \phi)$ is a representation of $(\g(A), [\;,\;], \phi)$. Then
$(A,\cdot, \phi)$ is an endo pre-Lie algebra. \mlabel{pro:5.3}
\end{pro}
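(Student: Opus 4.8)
The plan is to isolate the classical, non-endo content---recalled immediately before the statement---from the new content carried by $\phi$, and then to observe that all the endomorphism bookkeeping collapses to a single identity. Recall from~\mcite{Bai2} that for a pre-Lie algebra $(A,\cdot)$ the sub-adjacent Lie algebra $(\g(A),[\;,\;])$ is a Lie algebra and $(A,L)$ is a representation of it. Conversely, given a bilinear product $\cdot$ with $[a,b]=a\cdot b-b\cdot a$, the requirement that $(A,L)$ be a representation of $(\g(A),[\;,\;])$ is exactly the pre-Lie identity Eq.~\eqref{eq:PreLie}: expanding $L([x,y])=L(x)L(y)-L(y)L(x)$ applied to $z$ gives $(x\cdot y-y\cdot x)\cdot z=x\cdot(y\cdot z)-y\cdot(x\cdot z)$, which after rearrangement is precisely Eq.~\eqref{eq:PreLie}. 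Thus the only new thing to verify in either direction is the role of $\phi$.

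The crux is that the compatibility condition Eq.~\eqref{eq:repn} for the triple $(A,L,\phi)$ to be a representation of the endo Lie algebra $(\g(A),[\;,\;],\phi)$ reads, after substituting $\rho=L$ and $\alpha=\phi$, as $\phi(L(x)(v))=L(\phi(x))(\phi(v))$, that is, $\phi(x\cdot v)=\phi(x)\cdot\phi(v)$ for all $x,v\in A$. This is nothing but the statement that $\phi$ is a pre-Lie algebra endomorphism. Recognizing that the defining condition of an endo pre-Lie algebra and the defining compatibility of the representation $(A,L,\phi)$ coincide verbatim is the only real step; after it both directions are immediate.

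For the forward direction, I would first check that $\phi$ is a Lie algebra endomorphism of $\g(A)$: since $\phi(x\cdot y)=\phi(x)\cdot\phi(y)$, linearity gives $\phi([x,y])=\phi(x\cdot y-y\cdot x)=\phi(x)\cdot\phi(y)-\phi(y)\cdot\phi(x)=[\phi(x),\phi(y)]$, so $(\g(A),[\;,\;],\phi)$ is an endo Lie algebra. The representation claim for $(A,L,\phi)$ then follows by combining the classical fact that $(A,L)$ represents $\g(A)$ with the identification above, which supplies Eq.~\eqref{eq:repn}.

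For the converse, the expansion in the first paragraph shows that $(A,L)$ being a representation of $(\g(A),[\;,\;])$ forces the pre-Lie identity, so $(A,\cdot)$ is a pre-Lie algebra. It remains to see that $\phi$ is a pre-Lie algebra endomorphism, and this is again Eq.~\eqref{eq:repn} for $(A,L,\phi)$ read as $\phi(x\cdot v)=\phi(x)\cdot\phi(v)$. Hence $(A,\cdot,\phi)$ is an endo pre-Lie algebra. The proof involves no genuine obstacle: the only subtlety is the dual reading of the single identity $\phi(x\cdot y)=\phi(x)\cdot\phi(y)$ as simultaneously the pre-Lie endomorphism condition and the representation compatibility Eq.~\eqref{eq:repn}, everything else being the classical correspondence recalled above.
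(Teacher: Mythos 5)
Your proof is correct and is essentially the paper's own argument: the paper disposes of this proposition with the single remark that it ``follows from a simple checking from Eq.~\eqref{eq:repn} and Definition~\ref{de:5.1}'', and your write-up is exactly that check carried out explicitly, hinging on the same observation that Eq.~\eqref{eq:repn} with $\rho=L$, $\alpha=\phi$ is verbatim the condition $\phi(x\cdot v)=\phi(x)\cdot\phi(v)$ that $\phi$ be a pre-Lie endomorphism. The classical non-endo content (sub-adjacent Lie algebra, $(A,L)$ a representation, and the converse reading of the pre-Lie identity) is invoked from~\cite{Bai2} just as the paper does in the paragraph preceding the proposition.
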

\vspace{-.1cm}
\begin{proof} It follows from
a simple checking from Eq.~(\mref{eq:repn}) and
Definition~\mref{de:5.1}.
\end{proof}

\newpage

\begin{rmk}
From the viewpoint of operads,  due to the second half part of
the above conclusion, the operad of endo pre-Lie algebras is the
bisuccessor (2-splitting) of the operad of endo Lie algebras, which is
consistent with the fact that the operad of pre-Lie algebras is
the bisuccessor of the operad of Lie algebras~\cite{BBGN}.
\end{rmk}
\vspace{-.2cm}
\begin{defi} Let $(A, \cdot, \phi)$ be an \plhp.
Let $[\;,\;]$ be the product given by Eq.~(\mref{eq:5.41}). The triple
$(\g(A),[\;,\;] , \phi)$ is called the {\bf sub-adjacent endo Lie
algebra
 of $(A, \cdot, \phi)$} and $(A, \cdot, \phi)$ is
called a {\bf compatible \plhp structure on the endo Lie algebra
$(\g(A), [\;,\;], \phi)$}. \mlabel{de:5.4}
\end{defi}

Then Proposition~\mref{pro:5.3} and Corollary~\ref{cor:homO} give
the following conclusion.

\begin{cor}
Let $(A, \cdot, \phi)$ be an \plhp. Then the identity map
$\id_A:A\rightarrow \g(A)$ on $A$ is an $\mathcal O$-operator of
the sub-adjacent endo Lie algebra $(\g(A), [\;,\;], \phi)$
associated to the representation $(A, L,
 \phi)$. Equivalently, let $A$ be a pre-Lie algebra and $\phi:A\rightarrow A$ be a
pre-Lie homomorphism. Then the identity map $\id_A$ on $A$ is an
$\mathcal O$-operator of the sub-adjacent Lie algebra $\g(A)$
associated to the representation $(A,L)$ and $(\phi,\phi)$ is an
endomorphism of $\mathcal O$-operators on $\id$.
 \mlabel{co:denid}
\end{cor}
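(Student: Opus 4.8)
The plan is to verify directly that the identity map meets the two defining conditions of an $\mathcal O$-operator on an endo Lie algebra (Definition~\mref{de:4.17}), and then to read off the ``equivalently'' reformulation from Corollary~\mref{cor:homO}. There is essentially no computation to perform: every condition will collapse either to the defining identity of the sub-adjacent bracket or to the tautology $\phi=\phi$.

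First I would set $V=A$, $\rho=L$, $\alpha=\phi$ and $T=\id_A$, and check Eq.~\meqref{eq:4.18}. Its right-hand side is $\id_A\big(L(\id_A(u))v-L(\id_A(v))u\big)=u\cdot v-v\cdot u$, which by Eq.~\meqref{eq:5.41} is exactly $[u,v]=[\id_A(u),\id_A(v)]$, the left-hand side; so Eq.~\meqref{eq:4.18} holds tautologically. Next, Eq.~\meqref{eq:4.17} reads $\phi\,\id_A=\id_A\,\phi$, which is trivially true. Thus $\id_A$ satisfies both defining conditions of an $\mathcal O$-operator on the endo Lie algebra $(\g(A),[\;,\;],\phi)$ with respect to the data $(A,L)$ and $\alpha=\phi$.

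Second, Proposition~\mref{pro:5.3} supplies the remaining ingredient, namely that $(A,L,\phi)$ is in fact a representation of the endo Lie algebra $(\g(A),[\;,\;],\phi)$; combined with the previous step this makes $\id_A$ an $\mathcal O$-operator associated to $(A,L,\phi)$ in the sense of the second half of Definition~\mref{de:4.17}. Finally, applying Corollary~\mref{cor:homO} with $(\g,\phi)=(\g(A),\phi)$, $(V,\rho)=(A,L)$, $\alpha=\phi$ and $T=\id_A$ translates this statement into the equivalent one asserted in the ``Equivalently'' clause: $\id_A$ is an $\mathcal O$-operator of the sub-adjacent Lie algebra $\g(A)$ associated to $(A,L)$, and $(\phi,\phi)$ is an endomorphism of this $\mathcal O$-operator.

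The only point demanding any care is the bookkeeping of which data plays the role of $(V,\rho,\alpha)$ and $T$ when invoking the two cited results, since all three of $V$, $\rho$'s source, and $\alpha$ are built from $A$, $L$, and $\phi$; once this identification is fixed, the two-way passage between the endo and the ``endomorphism-of-$\mathcal O$-operator'' viewpoints is precisely what Corollary~\mref{cor:homO} records, and the proof is complete.
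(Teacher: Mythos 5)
Your proposal is correct and follows essentially the same route as the paper, which derives the corollary directly from Proposition~\ref{pro:5.3} (giving that $(A,L,\phi)$ is a representation of the sub-adjacent endo Lie algebra) and Corollary~\ref{cor:homO} (giving the translation into the ``endomorphism of $\mathcal O$-operators'' formulation). The only difference is that you spell out the trivial verifications of Eqs.~\eqref{eq:4.18} and \eqref{eq:4.17}, which the paper leaves implicit.
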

\vspace{-.2cm}
The second half part of the above conclusion can be generalized as
follows.

\begin{pro} Let $(A,\cdot_A)$ and $(B,\cdot_B)$ be pre-Lie algebras and $\phi:A\mto B$ be a pre-Lie algebra homomorphism.
The assignments
\begin{eqnarray*}
	(A,\cdot) & \mapsto& (\id_A:A\mto \g(A)),\\
\phi& \mapsto & ((\phi,\phi): \id_A\mto \id_B)
\end{eqnarray*}
\vspace{-.1cm}
defines a functor $F$ from the category $\PL$ of pre-Lie algebras
to the category $\OP$ of $\calo$-operators. \mlabel{pro:prelieo}
\end{pro}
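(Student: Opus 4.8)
The plan is to verify the three requirements for $F$ to be a functor: that it is well defined on objects, well defined on morphisms, and that it respects identities and composition. The object assignment is already settled: by Corollary~\mref{co:denid}, for any pre-Lie algebra $(A,\cdot)$ the identity map $\id_A:A\mto \g(A)$ is indeed an $\calo$-operator of the sub-adjacent Lie algebra $\g(A)$ associated to the representation $(A,L)$, so $F(A,\cdot)$ lands in the category $\OP$.

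For the morphism assignment, let $\phi:A\mto B$ be a homomorphism of pre-Lie algebras. First I would observe that $\phi$ is automatically a homomorphism of the sub-adjacent Lie algebras $\g(A)\mto \g(B)$: applying $\phi$ to the bracket in Eq.~\meqref{eq:5.41} and using $\phi(x\cdot_A y)=\phi(x)\cdot_B\phi(y)$ gives $\phi([x,y])=[\phi(x),\phi(y)]$. It then remains to check that the pair $(\phi,\phi)$ satisfies the two conditions of Definition~\mref{defi:isoOb} for a homomorphism of $\calo$-operators from $\id_A$ to $\id_B$, where the representations are $(A,L_A)$ and $(B,L_B)$ and the second component is taken to be $\phi$ itself. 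Condition~\meqref{defi:isocon2b}, upon substituting $\rho=L$, reads $\phi\big(L_A(x)v\big)=L_B(\phi(x))\phi(v)$, that is, $\phi(x\cdot_A v)=\phi(x)\cdot_B\phi(v)$; this is exactly the defining identity of a pre-Lie homomorphism. Condition~\meqref{defi:isocon1b} reads $\id_B\circ\phi=\phi\circ\id_A$, which holds trivially because both identity maps are the identifications of the underlying vector spaces with their sub-adjacent Lie algebras. Hence $(\phi,\phi):\id_A\mto\id_B$ is a morphism in $\OP$.

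Finally, functoriality is immediate from the definitions. The identity homomorphism of $(A,\cdot)$ is sent to $(\id_A,\id_A)$, which is the identity morphism of the object $\id_A$ in $\OP$. For composable pre-Lie homomorphisms $f:A\mto B$ and $g:B\mto C$, the composition rule for homomorphisms of $\calo$-operators is componentwise, so $F(g)\circ F(f)=(g,g)\circ(f,f)=(gf,gf)=F(gf)$. I do not expect any genuine obstacle here: the entire content of the proof reduces to recognizing that the compatibility condition~\meqref{defi:isocon2b} for the representation $L$ coincides with the pre-Lie homomorphism property, after which both the object statement and functoriality follow from previously established results and a routine bookkeeping check.
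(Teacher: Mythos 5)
Your proof is correct and follows essentially the same route as the paper: the object assignment is Corollary~\ref{co:denid}, the morphism conditions reduce (with $\rho=L$ and $\alpha=\phi$) to the pre-Lie homomorphism identity and the trivial relation $\id_B\phi=\phi\,\id_A$, and functoriality is a componentwise bookkeeping check. You simply spell out in detail what the paper's one-line proof leaves implicit.
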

\begin{proof}
Since $\phi$ is also a homomorphism  between the sub-adjacent Lie
algebras and $\id_B\phi=\phi\id_A$, the pair $(\phi,\phi)$ is a
(\weak) homomorphism  between  the $\calo$-operators $\id_A$ and $\id_B$.
The other axioms of functors are easy to verify.
\end{proof}

In the other direction, by~\mcite{Bai2}, for a representation
$(V,\rho)$ of a Lie algebra $\g$ and an $\calo$-operator $T:V\mto
\g$ on $\g$ associated to $(V,\rho)$, the operation $u\cdot_T v:=
\rho(T(u))v,~ u, v\in V$, defines a pre-Lie algebra $(V,\cdot_T)$.

\begin{pro} Let $(\g,[\;,\;]_\g)$, $(\h,[\;,\;]_\h)$ be Lie algebras and $(V_\g, \rho_\g)$, $(V_\h, \rho_\h)$ be representations of $(\g,[\;,\;]_\g)$, $(\h,[\;,\;]_\h)$ respectively.
Let $T_\g: V_\g\mto \g$ be an $\mathcal{O}$-operator associated to
    $(V_\g, \rho_\g)$ and $T_\h: V_\h\mto \h$ be an
    $\mathcal{O}$-operator associated to $(V_\h, \rho_\h)$.
Suppose that $(\phi,\alpha)$ is a homomorphism of $\mathcal
O$-operators from $T_\g$ to $T_\h$. Then $\alpha$ is a
homomorphism of pre-Lie algebras from $(V_\g,\cdot_\g)$ to
$(V_\h,\cdot_\h)$. The assignments
\vspace{-.2cm}
\begin{eqnarray*}
	T_\g &\mapsto& (V_\g,\cdot_{T_\g}),\\
(\phi,\alpha)&\mapsto& \alpha,
\end{eqnarray*}
define a functor $G$ from the category $\OP$ of $\calo$-operators
to the category  {\bf PL}   of pre-Lie algebras.

Furthermore, the functor $G$ is  right adjoint to the functor $F$ in
Proposition~\mref{pro:prelieo}. \mlabel{pro:otopl}
\end{pro}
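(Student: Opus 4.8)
The plan is to prove the three assertions in turn: first that $\alpha$ respects the pre-Lie products, then that $G$ is a functor, and finally that $G$ is right adjoint to $F$ by exhibiting a natural bijection of hom-sets in which condition~\meqref{defi:isocon1b} forces one component of a morphism and condition~\meqref{defi:isocon2b} becomes exactly the pre-Lie homomorphism condition.

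First I would check that $\alpha$ is a pre-Lie homomorphism. For $u,v\in V_\g$ one computes
\begin{align*}
\alpha(u\cdot_{T_\g}v)&=\alpha(\rho_\g(T_\g(u))v)
=\rho_\h(\phi(T_\g(u)))(\alpha(v))
=\rho_\h(T_\h(\alpha(u)))(\alpha(v))
=\alpha(u)\cdot_{T_\h}\alpha(v),
\end{align*}
where the second equality uses \meqref{defi:isocon2b} with $x=T_\g(u)$ and the third uses \meqref{defi:isocon1b}. Hence $\alpha\in\Hom_{\PL}((V_\g,\cdot_{T_\g}),(V_\h,\cdot_{T_\h}))$. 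That $G$ is a functor is then immediate: the identity morphism $(\id_\g,\id_{V_\g})$ of an $\calo$-operator is sent to $\id_{V_\g}$, and the morphism assignment $(\phi,\alpha)\mapsto\alpha$ is visibly compatible with composition.

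For the adjunction I would produce a bijection $\Hom_{\OP}(F(A),T)\cong\Hom_{\PL}(A,G(T))$ natural in the pre-Lie algebra $(A,\cdot)$ and the $\calo$-operator $T:V\mto\g$ associated to $(V,\rho)$. Unwinding Proposition~\mref{pro:prelieo}, a morphism $(\phi,\alpha)$ in $\Hom_{\OP}(F(A),T)$ is a Lie algebra homomorphism $\phi:\g(A)\mto\g$ together with a linear map $\alpha:A\mto V$ satisfying \meqref{defi:isocon1b}, which here reads $T\alpha=\phi\,\id_A=\phi$, and \meqref{defi:isocon2b}, which reads $\alpha(L(a)b)=\rho(\phi(a))(\alpha(b))$ for all $a,b\in A$. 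Thus \meqref{defi:isocon1b} forces $\phi=T\alpha$, so the morphism is determined by $\alpha$ alone; and since $L(a)b=a\cdot b$ and $\rho(T\alpha(a))(\alpha(b))=\alpha(a)\cdot_T\alpha(b)$, condition \meqref{defi:isocon2b} becomes $\alpha(a\cdot b)=\alpha(a)\cdot_T\alpha(b)$, which is precisely the statement that $\alpha\in\Hom_{\PL}(A,G(T))$.

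The single point needing an argument, and which I expect to be the main obstacle, is to verify that for such an $\alpha$ the forced map $\phi=T\alpha$ is genuinely a Lie algebra homomorphism $\g(A)\mto\g$, so that $(T\alpha,\alpha)$ is a legitimate morphism of $\calo$-operators. This follows from the $\calo$-operator identity \meqref{eq:4.18}: rewriting it as $[T(u),T(v)]_\g=T(u\cdot_T v-v\cdot_T u)$ shows that $T$ is a Lie algebra homomorphism from the sub-adjacent Lie algebra $\g(V,\cdot_T)$ of $(V,\cdot_T)=G(T)$ to $\g$. Since $\alpha$ is a pre-Lie homomorphism it preserves commutators and hence induces a Lie algebra homomorphism $\alpha:\g(A)\mto\g(V,\cdot_T)$; composing gives that $\phi=T\alpha:\g(A)\mto\g$ is a Lie homomorphism. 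Therefore $(\phi,\alpha)\mapsto\alpha$ and $\alpha\mapsto(T\alpha,\alpha)$ are mutually inverse, and the naturality of this bijection in $A$ and $T$ is a routine verification, establishing that $G$ is right adjoint to $F$.
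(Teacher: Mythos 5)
Your proposal is correct and follows essentially the same route as the paper: the identical computation showing $\alpha$ preserves the pre-Lie products, and the identical hom-set bijection $(\phi,\alpha)\mapsto\alpha$ with inverse $\alpha\mapsto(T\alpha,\alpha)$ for the adjunction. The only place you go beyond the paper is in explicitly verifying that $\phi=T\alpha$ is a Lie algebra homomorphism (by factoring it as the sub-adjacent Lie homomorphism induced by $\alpha$ followed by $T$, which is a Lie homomorphism out of the sub-adjacent Lie algebra of $(V,\cdot_T)$ by the $\calo$-operator identity); the paper takes this for granted when defining the inverse map, so your check fills in a genuine but minor detail rather than constituting a different approach.
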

\vspace{-.2cm}
\begin{proof}
For any $u,v\in V_\g$, we have
    $$\alpha(u\cdot_{T_\g} v)=\alpha(\rho_\g(T_\g(u))v)=\rho_\h(\phi(T_\g(u)))\alpha(v)=\rho_\h(T_\h\alpha(u))\alpha(v)=\alpha(u)\cdot_{T_\h} \alpha(v).$$
    Hence $\alpha$ is a homomorphism of pre-Lie algebras from
    $(V_\g,\cdot_{T_\g})$ to $(V_\h,\cdot_{T_\h})$. The other axioms of functors are easily verified.

To prove the adjointness of the functors $F$ and $G$, we only need
to show that, for any $(A,\cdot)\in \PL$ and $T:W\mto \h$ in $\OP$,
there is a bijection
$$ \Hom_\OP(F(A,\cdot),T)\cong \Hom_\PL((A,\cdot),GT)$$
that is natural in both arguments. The left hand side consists of
pairs $(\phi,\alpha)$ with $\phi:\g(A)\mto \h$ and $\alpha:A\mto W$
such that $\phi \id_A=T\alpha$, that is, $\phi=T\alpha$, and the
right hand side consists of pre-Lie algebra homomorphisms
$\alpha:(A,\cdot)\mto (W,\cdot_T)$. Then we see that the natural
bijection can be given by sending $(\phi,\alpha)$ to $\alpha$
whose inverse is sending $\alpha$ to $(T\alpha,\alpha)$.
\end{proof}

Utilizing Proposition~\mref{pro:prelieo}, we apply homomorphisms of pre-Lie algebras to obtain three
explicitly constructed examples of \weak homomorphisms of
classical $r$-matrices and hence of Lie bialgebras.

\begin{pro}  \mlabel{co:plbialg}
Let $(A, \cdot)$ be a pre-Lie algebra. Then $r_\id$ is a
skew-symmetric classical $r$-matrix in the Lie algebra
$\g(A)\ltimes_{L^*}A^*$ and
$(\g(A)\ltimes_{L^*}A^*,\delta_{r_\id})$ is a Lie bialgebra.
Furthermore, let  $\phi:A\rightarrow A$ be an endomorphism of
pre-Lie algebras satisfying
\begin{eqnarray}\mlabel{eq:5.8ab}
(\phi^2-\id)(x)\cdot \phi(y)=\phi(x)\cdot(\phi^2-\id)(y)=0, \quad
\forall x,y\in A.
\end{eqnarray}
Then $(\phi \pm \phi^*,\pm \phi+\phi^*)$ is a \weak  endomorphism
on both the triangular $r$-matrix $r_\id$ and the Lie bialgebra
$(\g(A)\ltimes_{L^*}A^*,\delta_{r_\id})$.
\end{pro}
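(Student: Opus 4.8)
The plan is to realize the pre-Lie algebra $(A,\cdot)$ and its endomorphism $\phi$ inside the $\calo$-operator machinery of this section, so that every assertion reduces to a result already proved. First I would invoke Corollary~\mref{co:denid}, which says that the identity map $\id_A:A\mto \g(A)$ is an $\calo$-operator of the sub-adjacent Lie algebra $\g(A)$ associated to the representation $(A,L)$. Feeding this $\calo$-operator into Lemma~\mref{lem:rt} with $(V,\rho)=(A,L)$ and $T=\id_A$ then gives immediately that $r_\id=\id_A-\sigma(\id_A)$ is a skew-symmetric classical $r$-matrix in the semi-direct product Lie algebra $\g(A)\ltimes_{L^*}A^*$.

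For the Lie bialgebra claim I would argue via the coboundary criterion of Proposition~\mref{rmk:4.2}. Because $r_\id$ is skew-symmetric, $r_\id+\tau(r_\id)=0$, so Eq.~\meqref{eq:4.111} holds trivially; because $r_\id$ is a classical $r$-matrix it satisfies the CYBE~\meqref{eq:4.10}, from which Eq.~\meqref{eq:4.2} follows. Proposition~\mref{rmk:4.2} then produces the triangular Lie bialgebra $(\g(A)\ltimes_{L^*}A^*,[\;,\;],\delta_{r_\id})$.

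The final claim I would obtain by specializing Example~\mref{co:con} to the data $\g=\g(A)$, $V=A$, $\rho=L$, $T=\id_A$ and $\alpha=\phi$. Under these choices the auxiliary relation $T\alpha=\phi T$ collapses to $\phi=\phi$ and is automatic, and the endomorphism $(\phi\pm\alpha^*,\pm\phi+\alpha^*)$ of Example~\mref{co:con} becomes exactly $(\phi\pm\phi^*,\pm\phi+\phi^*)$, as desired. So the task reduces to checking that the hypotheses Eqs.~\meqref{eq:semi1} and \meqref{eq:semi4} of Example~\mref{co:con} are equivalent, for $\phi$ a pre-Lie endomorphism, to condition~\meqref{eq:5.8ab}. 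Unwinding $L(x)y=x\cdot y$ and $\phi(x\cdot y)=\phi(x)\cdot\phi(y)$, I expect: the first half of \meqref{eq:semi1} is the bare endomorphism identity and holds automatically; the second half of \meqref{eq:semi1} rewrites as $(\phi^2-\id)(x)\cdot\phi(y)=0$; the operator identity $\alpha\rho(x)\alpha=\rho(\phi(x))$ rewrites as $\phi(x)\cdot(\phi^2-\id)(y)=0$; and the bracket identity $[\phi^2(x),\phi(y)]=[x,\phi(y)]$ in $\g(A)$ follows by applying both halves of \meqref{eq:5.8ab} to the products $\phi^2(x)\cdot\phi(y)$ and $\phi(y)\cdot\phi^2(x)$.

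The only genuine work is this last translation, and I expect it to be routine rather than an obstacle: condition~\meqref{eq:5.8ab} is plainly engineered so that precisely these ``twisted representation'' and bracket-invariance identities become available. Once the dictionary between \meqref{eq:5.8ab} and the hypotheses of Example~\mref{co:con} is in place, the conclusion that $(\phi\pm\phi^*,\pm\phi+\phi^*)$ is a \weak endomorphism of both $r_\id$ and the associated Lie bialgebra is a direct citation of Example~\mref{co:con}.
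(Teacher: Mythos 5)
Your proposal is correct and takes essentially the same route as the paper: the paper's proof consists precisely of observing that, with $\g=\g(A)$, $V=A$, $\rho=L$, $T=\id_A$ and $\alpha=\phi$, Eq.~\meqref{eq:5.8ab} is equivalent to Eqs.~\meqref{eq:semi1}--\meqref{eq:semi4}, and then citing Example~\mref{co:con}. Your explicit unwinding of that dictionary (and your derivation of the $r$-matrix and Lie bialgebra claims from Corollary~\mref{co:denid}, Lemma~\mref{lem:rt} and Proposition~\mref{rmk:4.2}) just spells out what the paper leaves implicit.
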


\begin{proof}
Note that in this case, Eq.~(\mref{eq:5.8ab})
is equivalent to Eqs.~\meqref{eq:semi1}--\meqref{eq:semi4}.
Therefore the conclusion follows from Example~\mref{co:con}.
\end{proof}

Now let $(A,\cdot_A), (B,\cdot_B)$ be pre-Lie algebras. Then
$r_{\id_A}$ and $r_{\id_B}$ are skew-symmetric classical
$r$-matrices in the Lie algebras $\g(A)\ltimes_{L^*_A}A^*$ and
$\g(B)\ltimes_{L^*_B}B^*$ respectively. Moreover,
$(\g(A)\ltimes_{L^*_A}A^*,\delta_{r_{\id_A}})$ and
$(\g(B)\ltimes_{L^*_B}B^*,\delta_{r_{\id_B}})$ are triangular Lie
bialgebras. We can thus give the following two results.

\begin{pro}
Let $\phi:(A,\cdot_A) \rightarrow (B,\cdot_B)$ be a pre-Lie
algebra homomorphism. Then $(\phi,\phi^*)$ is both a \weak
homomorphism of classical $r$-matrices from $r_{\id_A}$ to
$r_{\id_B}$ and a \weak homomorphism of Lie bialgebras from
$(\g(A)\ltimes_{L^*_A}A^*,\delta_{r_{\id_A}})$ to
$(\g(B)\ltimes_{L^*_B}B^*,\delta_{r_{\id_B}})$.
\end{pro}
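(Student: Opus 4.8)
The plan is to obtain $(\phi,\phi^*)$ by specializing the functorial machinery of the previous subsection to the trivial choice of auxiliary maps. First I would apply Proposition~\mref{pro:prelieo}: the pre-Lie algebra homomorphism $\phi:A\mto B$ induces a homomorphism $(\phi,\phi)$ of $\calo$-operators from $\id_A:A\mto \g(A)$, associated to the representation $(A,L_A)$, to $\id_B:B\mto \g(B)$, associated to $(B,L_B)$. Consequently the hypotheses of Theorem~\mref{cor:maincon} hold with $\g=\g(A)$, $\h=\g(B)$, $(V_\g,\rho_\g)=(A,L_A)$, $(V_\h,\rho_\h)=(B,L_B)$, $T_\g=\id_A$, $T_\h=\id_B$ and $\alpha=\phi$, so that $r_{\id_A}$ and $r_{\id_B}$ are precisely the skew-symmetric $r$-matrices of Lemma~\mref{lem:rt} attached to these $\calo$-operators.

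For the first assertion I would take the auxiliary maps $\psi=0$ and $\beta=0$. With this choice every term occurring in Eqs.~\meqref{eq:reqqq3} and \meqref{eq:ttt}--\meqref{eq:hh2} carries a factor of $\psi$ or $\beta$, so each of these equations degenerates to $0=0$. Theorem~\mref{cor:maincon} then applies and yields that $(\phi+\beta^*,\psi+\alpha^*)=(\phi,\phi^*)$ is a \weak homomorphism of classical $r$-matrices from $r_{\id_A}$ to $r_{\id_B}$; this is exactly the morphism assignment of the functor in Corollary~\mref{cor:zero} evaluated at $(\phi,\phi)$. Here $\phi$ is read as the map on $\g(A)\ltimes_{L_A^*}A^*$ acting by $\phi$ on the sub-adjacent Lie algebra and by $0$ on $A^*$, and $\phi^*$ is read analogously on $\g(B)\ltimes_{L_B^*}B^*$.

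For the second assertion I would keep the same specialization $\psi=\beta=0$ and invoke Corollary~\mref{co:opliebial}, whose hypotheses are the equations just verified; it gives directly that $(\phi,\phi^*)$ is a \weak homomorphism between the Lie bialgebras $(\g(A)\ltimes_{L_A^*}A^*,\delta_{r_{\id_A}})$ and $(\g(B)\ltimes_{L_B^*}B^*,\delta_{r_{\id_B}})$. Alternatively, the second assertion follows from the first by Theorem~\mref{co:r12}: since $r_{\id_A}$ and $r_{\id_B}$ are skew-symmetric, $r+\tau(r)=0$ forces Eq.~\meqref{eq:4.111} to hold automatically, so the \weak homomorphism of $r$-matrices lifts to one of the corresponding coboundary Lie bialgebras.

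I do not anticipate a genuine obstacle: the content is entirely in matching the triple $(A,B,\phi)$ to the data required by Theorem~\mref{cor:maincon} and observing that the vanishing choice $\psi=\beta=0$ trivializes all the compatibility conditions. The only point needing care is the bookkeeping around $\alpha=\phi$ (hence $\alpha^*=\phi^*$) and the interpretation of $\phi$ and $\phi^*$ as maps on the semidirect-product Lie algebras, as indicated above.
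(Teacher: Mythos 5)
Your proof is correct and takes essentially the same route as the paper: the paper's own proof is the one-line citation of Corollaries~\ref{cor:zero} and~\ref{co:opliebial}, which are precisely the $\psi=\beta=0$ specialization of Theorem~\ref{cor:maincon} that you carry out explicitly. Your unpacking of the details --- obtaining $\alpha=\phi$ via Proposition~\ref{pro:prelieo}, checking that the compatibility equations trivialize, and noting the alternative passage to Lie bialgebras through Theorem~\ref{co:r12} using skew-symmetry --- is accurate and consistent with the paper's intended argument.
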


\begin{proof}
The conclusion follows
from Corollaries~\ref{cor:zero} and ~\ref{co:opliebial}.
\end{proof}

\begin{pro}
Let $\phi:(A,\cdot_A) \rightarrow (B,\cdot_B)$ and
$\psi:(B,\cdot_B)\rightarrow (A,\cdot_A)$ be pre-Lie algebra
homomorphisms such that $\psi\phi=\id_A$. Then for any $0\neq
\theta\in K$, the pair $(\phi+\theta{\psi}^*,\theta \psi+\phi^*)$
is both a \weak homomorphism of skew-symmetric classical $r$-matrices from
$r_{\id_A}$ to $r_{\id_B}$ and a \weak homomorphism of Lie
bialgebras from $(\g(A)\ltimes_{L^*_A}A^*,\delta_{r_{\id_A}})$ to
$(\g(B)\ltimes_{L^*_B}B^*,\delta_{r_{\id_B}})$. In particular,  if
$\phi$ is a linear bijection, then
$(\phi+\theta{\phi^{-1}}^*,\theta \phi^{-1}+\phi^*)$ is  both a
\weak isomorphism of skew-symmetric classical $r$-matrices from $r_{\id_A}$ to
$r_{\id_B}$ and a \weak isomorphism of Lie bialgebras from
$(\g(A)\ltimes_{L^*_A}A^*,\delta_{r_{\id_A}})$ to
$(\g(B)\ltimes_{L^*_B}B^*,\delta_{r_{\id_B}})$.
\end{pro}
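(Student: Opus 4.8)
The plan is to realize the pair $(\phi+\theta\psi^*,\theta\psi+\phi^*)$ as an instance of the general construction in Theorem~\mref{cor:maincon}, applied to the canonical $\calo$-operators attached to the two pre-Lie algebras. By Corollary~\mref{co:denid} (equivalently Proposition~\mref{pro:prelieo}), the identity maps $\id_A:A\to\g(A)$ and $\id_B:B\to\g(B)$ are $\calo$-operators of the sub-adjacent Lie algebras associated to the left-multiplication representations $(A,L_A)$ and $(B,L_B)$, and the induced skew-symmetric classical $r$-matrices are precisely $r_{\id_A}$ and $r_{\id_B}$. Moreover, since $\phi$ is a homomorphism of pre-Lie algebras, the functor $F$ of Proposition~\mref{pro:prelieo} shows that $(\phi,\phi)$ is a homomorphism of $\calo$-operators from $\id_A$ to $\id_B$. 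Thus I would apply Theorem~\mref{cor:maincon} with $\g=\g(A)$, $\h=\g(B)$, $(V_\g,\rho_\g)=(A,L_A)$, $(V_\h,\rho_\h)=(B,L_B)$, $T_\g=\id_A$, $T_\h=\id_B$ and $\alpha=\phi$.

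The key move is to take both auxiliary maps of Theorem~\mref{cor:maincon}, namely the map $\h\to\g$ and the map $V_\h\to V_\g$, equal to the single map $\theta\psi:B\to A$. With these choices the conclusion $(\phi+\beta^*,\psi+\alpha^*)$ of the theorem becomes exactly $(\phi+\theta\psi^*,\theta\psi+\phi^*)$, so it remains to verify the four hypotheses. Equation~\meqref{eq:ttt} reads $\id_A\circ(\theta\psi)=(\theta\psi)\circ\id_B$, which holds automatically because both $\calo$-operators are identity maps. Each of \meqref{eq:hh1}, \meqref{eq:hh2} and \meqref{eq:reqqq3} cancels the nonzero scalar $\theta$ and, after unwinding $L_A$ and $L_B$ into the pre-Lie products, reduces to a one-line consequence of the homomorphism property of $\psi$ together with the splitting relation $\psi\phi=\id_A$. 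For instance, \meqref{eq:hh1} becomes $\psi(\phi(x)\cdot_B b)=x\cdot_A\psi(b)$, which follows from $\psi(\phi(x)\cdot_B b)=\psi\phi(x)\cdot_A\psi(b)=x\cdot_A\psi(b)$; identity \meqref{eq:hh2} is handled identically, and \meqref{eq:reqqq3} follows after expanding the sub-adjacent bracket via Eq.~\meqref{eq:5.41}.

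Once the hypotheses are checked, Theorem~\mref{cor:maincon} yields that $(\phi+\theta\psi^*,\theta\psi+\phi^*)$ is a \weak homomorphism of the skew-symmetric classical $r$-matrices from $r_{\id_A}$ to $r_{\id_B}$, and Corollary~\mref{co:opliebial} upgrades the same data to a \weak homomorphism of the associated Lie bialgebras $(\g(A)\ltimes_{L_A^*}A^*,\delta_{r_{\id_A}})$ and $(\g(B)\ltimes_{L_B^*}B^*,\delta_{r_{\id_B}})$. For the final assertion, when $\phi$ is a linear bijection the relation $\psi\phi=\id_A$ forces $\psi=\phi^{-1}$, so the pair reads $(\phi+\theta(\phi^{-1})^*,\theta\phi^{-1}+\phi^*)$. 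Relative to the decompositions $\g(A)\oplus A^*$ and $\g(B)\oplus B^*$, each component is block-diagonal with diagonal blocks $\phi,\ \theta(\phi^{-1})^*$ (respectively $\theta\phi^{-1},\ \phi^*$); as $\phi$ is bijective and $\theta\neq 0$, each block is invertible, so both components are linear isomorphisms and the pair is a \weak isomorphism in both senses.

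I expect no serious obstacle, since the argument is essentially an application of the already-established Theorem~\mref{cor:maincon} and Corollary~\mref{co:opliebial}. The only delicate point is the bookkeeping: correctly matching the several maps in Theorem~\mref{cor:maincon} — in particular setting both the $\h\to\g$ map and the $V_\h\to V_\g$ map equal to $\theta\psi$ — and confirming that the three nontrivial compatibility conditions really do collapse to the pre-Lie homomorphism property of $\psi$ together with $\psi\phi=\id_A$.
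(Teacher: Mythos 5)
Your proposal is correct and follows essentially the same route as the paper's own proof: the paper likewise applies Theorem~\ref{cor:maincon} with $\alpha=\phi$ and with both auxiliary maps taken to be $\theta\psi$, observes that the scalar $\theta$ cancels so that Eqs.~\eqref{eq:reqqq3}, \eqref{eq:ttt}, \eqref{eq:hh1}, \eqref{eq:hh2} reduce to the pre-Lie homomorphism property of $\psi$ together with $\psi\phi=\id_A$, and then handles the bijective case via $\psi=\phi^{-1}$. Your write-up just makes explicit the verifications the paper leaves implicit.
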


\begin{proof}
If $\psi$ is a homomorphism of
pre-Lie algebras and $\psi\phi=\id_A$, then $\theta \psi$
satisfies $\id_A \theta \psi=\theta \psi \id_B$,
Eqs.~\meqref{eq:reqqq3}, \meqref{eq:hh1} and \meqref{eq:hh2} where
$\psi$ is replaced by $\theta \psi$ and $\beta$ by $\theta\psi$.
Therefore the first conclusion follows from
Theorem~\ref{cor:maincon}. The special case when $\phi$ is an
isomorphism then follows directly or from
Example~\ref{co:opliebial}.
\end{proof}
\vspace{-.2cm}

\begin{rmk} Note that when $\phi$  is invertible and $\theta\ne
0$,  the inverse of $\phi+\theta{\phi^{-1}}^*$ is
$\phi^{-1}+\theta^{-1}\phi^*$. Thus by Proposition~\ref{pro:iso},
$\phi+\theta{\phi^{-1}}^*$ gives an isomorphism of Lie bialgebras
from $(\g(A)\ltimes_{L^*_A}A^*,\delta_{r_{\id_A}})$ to
$(\g(B)\ltimes_{L^*_B}B^*,\delta_{r_{\id_B}})$ if and only if
$\theta=1$. Therefore, when $\theta\ne 1$, the above isomorphism
of pre-Lie algebras provide non-trivial examples of \weak
isomorphisms of Lie bialgebras which are not the usual
isomorphisms of Lie bialgebras.
\end{rmk}

\noindent
{\bf Acknowledgments.}  This work is supported by
 National Natural Science Foundation of China (Grant Nos. 11771190, 11931009 and 11922110).   C. Bai is also
supported by the Fundamental Research Funds for the Central
Universities and Nankai ZhiDe Foundation.

\vspace{-.2cm}

\end{document}